\let\csname equation*\endcsname\relax
\let\csname endequation*\endcsname\relax
\newcommand{\pmm}{+\!/\!-}
\newcommand{\RR}{\mathbb{R}}
\newcommand{\X}{\mathcal{X}}
\newcommand{\U}{\mathcal{U}}
\newcommand{\T}{\mathbb{T}_T}
\newcommand{\Tu}{\mathbb{T}}
\newcommand{\tTT}{\tilde{\mathcal{T}}}
\newcommand{\tO}{\tilde{\mathcal{O}}}
\newcommand{\tS}{\tilde{\Sigma}}
\newcommand{\tJ}{J_4}
\newcommand{\tz}{\tilde{z}}
\newcommand{\tL}{\tilde{\Lambda}}
\newcommand{\tC}{\tilde{\mathcal{C}}}
\newcommand{\tG}{\tilde{\Gamma}}
\newcommand{\tg}{\tilde{\gamma}^{\text{up}}}
\newcommand{\bt}{\bar{\zeta}}
\newcommand{\tk}{\tilde{k}}
\newcommand{\te}{\tilde{\delta}}
\newcommand{\tw}{\tilde{\omega}}
\newcommand{\impactwzz}[2]{\left(0,v^{#1}_{#2},x^{#1}_{#2},y^{#1}_{#2},s^{#1}_{#2} \right)}
\newcommand{\sgn}{\text{sgn}}
\newcommand{\tp}{\tilde{\phi}}
\newcommand{\condsC}{C.1--C.4}
\newcommand{\hlambda}{\hat{\lambda}}
\newcommand{\blambda}{\bar{\lambda}}
\newcommand{\bmu}{\bar{\mu}}
\newcommand{\sd}{\sigma^{\text{down}}}
\newcommand{\su}{\sigma^{\text{up}}}
\newcommand{\Su}{S^{\text{up}}}
\newcommand{\Sd}{S^{\text{down}}}
\newcommand{\F}{\mathcal{F}}
\newcommand{\ti}{\;\;\makebox[0pt]{$\top$}\makebox[0pt]{$\cap$}\;\;}
\newtheorem{lem}{Lemma}[section]
\newtheorem{remark}{Remark}[section]
\newtheorem{prop}{Proposition}[section]
\numberwithin{equation}{section}
\begin{document}

\unitlength=\textwidth

\title{The scattering map in two coupled piecewise-smooth systems, with
numerical application to rocking blocks}
\author[1]{A. Granados}
\author[2]{S.J. Hogan}
\author[3]{T.M. Seara}
\affil[1]{INRIA Paris-Rocquencourt}
\affil[2]{University of Bristol}
\affil[3]{Universitat Polit\`ecnica de Catalunya}

\maketitle

\begin{abstract}
We consider a non-autonomous dynamical system formed by coupling two
piecewise-smooth systems in $\RR^2$ through a non-autonomous periodic
perturbation. We study the dynamics around one of the heteroclinic orbits of one
of the piecewise-smooth systems.  In the unperturbed case, the system possesses
two $C^0$ normally hyperbolic invariant
manifolds of dimension two with a couple of three dimensional heteroclinic
manifolds between them. These heteroclinic manifolds are foliated by
heteroclinic connections between $C^0$ tori located at the same energy levels.
By means of the {\em impact map} we prove the persistence of these objects under
perturbation. In addition, we provide sufficient conditions of the existence of
transversal heteroclinic intersections through the existence of simple zeros of
Melnikov-like functions.  The heteroclinic manifolds allow us to define the {\em
scattering map}, which links asymptotic dynamics in the invariant manifolds
through heteroclinic connections. First order properties of this map provide
sufficient conditions for the asymptotic dynamics to be located in different
energy levels in the perturbed invariant manifolds. Hence we have an essential
tool for the construction of a heteroclinic skeleton which, when followed, can
lead to the existence of Arnol'd diffusion: trajectories that, on large time
scales, destabilize the system by further accumulating energy.  We validate all
the theoretical results with detailed numerical computations of a mechanical
system with impacts, formed by the linkage of two rocking blocks with a spring.
\end{abstract}


\section{Introduction}
This paper is concerned with the question of whether it is possible to observe
Arnol'd diffusion~\cite{Arn64} in systems governed by piecewise-smooth
differential equations, to which known results in the field can not be directly
applied. Arnol'd diffusion occurs when there is a large change in the action
variables in nearly integrable Hamiltonian systems. Systems governed by
piecewise-smooth differential equations  are widespread in engineering,
economics, electronics, ecology and biology; see~\cite{MakLam12} for a recent
comprehensive survey of the field. 

Action variables are conserved for integrable systems. When such systems
are perturbed, for example, by a periodic forcing, KAM theory
tells us that the value of these variables stays close to their conserved values
for {\em most} solutions. Subsequently Arnol'd~\cite{Arn64} gave an example of
an nearly integrable system for which there was large growth in the
action variables.

There has been a  lot of activity in the field of Arnol'd diffusion in recent
years and a large variety of results that have been obtained or announced. We
refer to  \cite{DelshamsGLS08b,Cheng08,Cheng10,Bernard10} for a detailed survey
of recent results.  Up to now, there are mainly two kind of methods used to
prove the existence of instabilities in Hamiltonian systems close to integrable;
variational methods
\cite{Berti02,BertiBB02,BertiBB03,Mather02,ChengY04,KalLev08,KalLev08b,BernardKZ11,KaloshinZ12}
and the so-called geometric methods
\cite{DelLlaSea00,DelLlaSea06,DelLlaSea08,GideaL06b,Treschev04,Treschev12,FejozGKR11},
both of which have been used to prove generic results or study concrete
examples.

The study of Arnol'd diffusion using geometric methods has been greatly
facilitated by the introduction~\cite{DelLlaSea00,DelLlaSea06,DelLlaSea08} of
the {\em scattering map} of a normally hyperbolic invariant manifold with
intersecting stable and unstable invariant manifolds along a homoclinic
manifold. This map finds the asymptotic orbit in the future, given an asymptotic
orbit in the past.  Perturbation theory of the scattering map~\cite{DelLlaSea08}
generalizes and extends several results obtained using Melnikov's
method~\cite{Mel63, GucHol83}.

For planar regular systems under non-autonomous periodic perturbations,
Melnikov's method is used to determine the persistence of periodic orbits and
homoclinic/heteroclinic connections by guaranteeing the existence of simple
zeros of the subharmonic Melnikov function and the Melnikov function,
respectively. The main idea is to consider a section normal to the unperturbed
vector field at some point on the unperturbed homoclinic/heteroclinic
connection. Then it is possible to measure the distance between the perturbed
manifolds, owing to the regularity properties of the stable and unstable
manifolds of hyperbolic critical points in smooth systems.

In~\cite{GraHogSea12} these classical results were rigorously extended to a
general class of piecewise-smooth differential equations, allowing for a general
periodic Hamiltonian perturbation, with no symmetry assumptions. For such
systems, the unperturbed system is defined in two domains, separated by a {\em
switching manifold} $\Sigma$, each possessing one hyperbolic critical point
either side of $\Sigma$. In this case, the vector normal to the unperturbed
vector field is not defined everywhere. By looking for the intersection between
the stable and unstable manifolds with the switching manifold, an asymptotic
formula for the distance between the manifolds was obtained. This turned out to
be a {\em modified} Melnikov function, whose zeros give rise to the existence of
heteroclinic connections for the perturbed system. The general results
in~\cite{GraHogSea12} were then applied to the case of the rocking
block~\cite{Hou63,Hog89} and excellent agreement was obtained with the results
of ~\cite{Hog89}.

Following these ideas, in this paper we study a system which consists of a
non-autonomous periodic perturbation of a piecewise-smooth integrable
Hamiltonian system in $\RR^4$.  The unperturbed system is given by the product
of two piecewise-smooth systems. We assume that one of them has two hyperbolic
critical points of saddle type with a pair of $C^0$ heteroclinic orbits between
them.  The other system behaves as a classical integrable system with a region
foliated by $C^0$ periodic orbits.  Therefore, the product system looks like a
classical {\emph{a priori}} unstable Hamiltonian system \cite{ChierchiaG94},
possessing two $C^0$ normally hyperbolic invariant manifolds of dimension two
with a couple of three dimensional $C^0$ heteroclinic manifolds.

The main difficulty in following the program of~\cite{DelLlaSea06} is that we
couple two piecewise-smooth systems,  each of which possesses its own switching
manifold.  Therefore, when considering the product system, we need to deal with
a piecewise-smooth system in $\RR^4$ with two 3-dimensional switching manifolds
that cross in a 2-dimensional one. Therefore the classical impact map associated
with one switching manifold will be piecewise-smooth in general. We overcome
this difficulty by restricting the impact map to suitable domains so that we can
apply classical results for normally hyperbolic invariant manifolds and their
persistence and obtain a scattering map between them with explicit asymptotic
formulae.

Note that, in this paper, we restrict our attention to the study of the
scattering map and we do not rigorously prove the existence of Arnol'd
diffusion.  Due to the continuous nature of the system considered in this paper,
the method of correctly aligned windows \cite{GideaL06b} seems to be very
suitable for application to our model for this purpose. In
fact, recent results in \cite{LlaveGS13}, which do not rely on the use of KAM
theory, appear to be capable of extension to piecewise-smooth systems in order to
achieve this goal.

Piecewise-smooth systems are found in a host of applications~\cite{MakLam12}. A
simple example is the rocking block model~\cite{Hou63}, which has wide
application in earthquake engineering and robotics. This piecewise-smooth system
has been shown to possess a vast array of solutions~\cite{Hog89}. The model has
been extended to include, for example, stacked rigid blocks~\cite{SpaRouPol01}
and multi--block structures~\cite{PenLouCam08}. Particular attention is paid to
the case of block overturning in the presence of an earthquake, as this has
consequences for safety in the nuclear industry~\cite{CK09} and for the
preservation of ancient statues~\cite{KouPapCot12}. Within the context of the
current paper, Arnol'd diffusion could be seen as one possible mechanism for
block overturning, when the perturbation (earthquake) of an apparently stable
system (two blocks coupled by a simple spring) leads to  overturning. An early
application of Melnikov theory to the rocking block problem~\cite{Kov10}
involved the calculation of the {\em stochastic} Melnikov criterion of
instability for a multidimensional rocking structure subjected to {\em random}
excitation.

Note that we are considering the class of piecewise-smooth differential
equations that involve {\em crossing}~\cite{MakLam12}, where the normal
components of the vector field either side of the switching manifold are in the
{\em same} sense. When these components are in the {\em opposite} sense, {\em
sliding} can occur~\cite{MakLam12}. The extension of the Melnikov method to this
case is still in its infancy~\cite{DuLi12}.

The paper is organised as follows.  In
section~\ref{sec:system_description_scattering} we present the system we will
consider and the main piecewise-smooth invariant geometrical objects that will
play a role in the process.  In section~\ref{sec:notation_properties_scattering}
we present the impact map associated with one switching manifold in the extended
phase space and its domains of regularity and provide an explicit expression for
it in the unperturbed case.  In section~\ref{sec:invariant_objects_persistence}
we study  some regular normally hyperbolic invariant manifolds for the impact
map which correspond to the piecewise-smooth ones for the flow in the extended
phase space.  We then apply classical perturbation theory to demonstrate the
persistence of the normally hyperbolic invariant manifolds and their stable and
unstable manifolds and deduce the persistence of the corresponding invariant
manifolds for the perturbed flow. This allows us to give explicit conditions for
the existence of transversal heteroclinic manifolds in the perturbed system in
terms of a modified Melnikov function and to derive explicit formulae for the
scattering map in section~\ref{sec:scattering_map}. In particular, we obtain
formulae for the change in the energy of the points related by the scattering
map and in the average energy along their orbits.  In section~\ref{sec:example}
we illustrate the theoretical results of section~\ref{sec:scattering_map} with
numerical computations for two coupled rocking blocks subjected to a small
periodic forcing.  We use the simple zeros of the Melnikov function to
numerically compute heteroclinic connections linking, forwards and backwards in
time, two trajectories at the invariant manifolds. These trajectories correspond
to one block performing small rocking oscillations while the other block rocks
about one of its heteroclinic orbits. During this large, fast, excursion, the
amplitude of the rocking block oscillations may lead to an increase or decrease
in its average energy. Using the first order analysis of the scattering map we
are able to approximately predict the magnitude of this change, which is in
excellent agreement with our numerical computations.

\section{System description}
\label{sec:system_description_scattering}
\subsection{Two uncoupled systems}
\label{sec:uncoupled_systems}
In this paper we consider a non-autonomous dynamical system formed by coupling
two piecewise-smooth systems in $\RR^2$ through a non-autonomous periodic
perturbation. We divide $\RR^2$ into two sets,
\begin{align*}
&S^+=\left\{ (q,p)\in\RR^2\,\vert\,q>0 \right\},\\
&S^-=\left\{ (q,p)\in\RR^2\,\vert\,q<0 \right\},
\end{align*}
separated by the switching manifold 
\begin{equation}
\Sigma=\Sigma^+\cup\Sigma^-\cup\{(0,0)\},
\label{eq:boundary_scattering}
\end{equation}
where
\begin{equation}
\begin{aligned}
&\Sigma^+=\left\{ (0,p)\in\RR^2\,\vert \, p>0 \right\},\\
&\Sigma^-=\left\{ (0,p)\in\RR^2\,\vert \, p<0 \right\}.
\end{aligned}
\label{eq:sigmas}
\end{equation}
We consider the piecewise-smooth systems defined in $\RR^2\backslash\Sigma$
\begin{equation}
\left( 
\begin{array}{c}
\dot{x}\\\dot{y}
\end{array}
 \right):=\X(x,y):=
\left\{ 
\begin{aligned}
&\X^+(x,y)&&\text{if }(x,y)\in S^+\\
&\X^-(x,y)&&\text{if }(x,y)\in S^-
\end{aligned}
 \right.
\label{eq:field1}
\end{equation}
\begin{equation}
\left( 
\begin{array}{c}
\dot{u}\\\dot{v}
\end{array}
 \right):=\U(u,v):=
\left\{ 
\begin{aligned}
&\U^+(u,v)&&\text{if }(u,v)\in S^+\\
&\U^-(u,v)&&\text{if }(u,v)\in S^-
\end{aligned}
 \right.
\label{eq:field2}
\end{equation}
with $\X^\pm(x,y),\,\U^\pm(u,v)\in C^\infty(\RR^2)$.\\

Let us assume that~(\ref{eq:field1}) and (\ref{eq:field2}) are Hamiltonian
systems associated, respectively, with $C^0(\RR^2)$ piecewise-smooth
Hamiltonians of the form
\begin{align}
X(x,y)&:=\frac{y^2}{2}+Y(x)\nonumber\\
&:=\left\{ 
\begin{aligned}
&X^+(x,y):=\frac{y^2}{2}+Y^+(x)&&\text{if }(x,y)\in
S^+
\\
&X^-(x,y):=\frac{y^2}{2}+Y^-(x)&&\text{if }(x,y)\in S^-
\end{aligned}
\right.
\label{eq:unperturbed_Hamiltonian1}
\end{align}
\begin{align}
U(u,v)&:=\frac{v^2}{2}+V(u)\nonumber\\
&:=\left\{ 
\begin{aligned}
&U^+(u,v):=\frac{v^2}{2}+V^+(u)&&\text{if }(u,v)\in
S^+
\\
&U^-(u,v):=\frac{v^2}{2}+V^-(u)&&\text{if }(u,v)\in S^- ,
\end{aligned}
\right.
\label{eq:unperturbed_Hamiltonian2}
\end{align}
with $Y^\pm,\,V^\pm\in C^{\infty}(\RR^2)$ satisfying
$Y^+(0)=Y^-(0)=0$ and $V^+(0)=V^-(0)=0$.  Then 
\begin{equation}
\begin{aligned}
\X^\pm&=J\nabla X^\pm\\
\U^\pm&=J\nabla U^\pm\\
\end{aligned}
\label{eq:Hamiltonian_relations}
\end{equation}
where $J$ is the symplectic matrix
\begin{equation*}
J=\left(
\begin{array}{cc}
0&1\\-1&0
\end{array}\right).
\end{equation*}

From the form of the Hamiltonians~(\ref{eq:unperturbed_Hamiltonian1})
and~(\ref{eq:unperturbed_Hamiltonian2}), it is natural to extend the
definition of the flows of $\X^+$ and  $\U^+$ to $S^+\cap\Sigma^+$ and of the flows of
$\X^-$ and $\U^-$ to $S^-\cap\Sigma^-$. Hence, the
Hamiltonian $X(x,y)$ in~(\ref{eq:unperturbed_Hamiltonian1}) is naturally extended to $\RR^2$ as
\begin{equation*}
X(x,y)=\left\{
\begin{aligned}
&X^+(x,y)&&\text{if }(x,y)\in S^+\cup\Sigma^+\cup\left\{ (0,0) \right\}\\
&X^-(x,y)&&\text{if }(x,y)\in S^-\cup\Sigma^-,
\end{aligned}\right.
\end{equation*}
and similarly for the Hamiltonian $U(u,v)$ in~(\ref{eq:unperturbed_Hamiltonian2}).
Note that the vector fields $\X^+$ and $\X^-$ are tangent to $\Sigma$ at $(0,0)$
(resp.  $\U^+$ and $\U^-$).

To define the flow associated with system~(\ref{eq:field1}), we proceed as usual
in piecewise-smooth systems. Given an initial condition $(x_0,y_0)\in S^{\pm}$,
we apply the flows $\phi_{\X^\pm}$ associated with the smooth systems $\X^\pm$
until the switching manifold $\Sigma$ is crossed at some point. Then, using this
point as the new initial condition we evolve with the flow in the new domain.
The flow associated with system~(\ref{eq:field2}) is defined in a similar way.
Note that, as no sliding along the switching manifold is possible, the
definition of the flows is straightforward. This allows us to consider the
flows 
\begin{equation}
\phi_{\X}(t;x_0,y_0)\text{ and } \phi_{\U}(t;,u_0,v_0)
\label{eq:unperturbed_flows}
\end{equation}
associated with systems~(\ref{eq:field1}) and~(\ref{eq:field2}), respectively,
that are $C^0$ functions piecewise-smooth in $t$ satisfying
\begin{align*}
\phi_{\X}(0;,x_0,y_0)&=(x_0,y_0)\\
\phi_{\U}(0;,u_0,v_0)&=(u_0,v_0).
\end{align*}

Let us assume that the following conditions are satisfied.
\begin{enumerate}[C.1]
\item System~(\ref{eq:field1}) possesses two hyperbolic critical points
$Q^+\in S^+$ and $Q^-\in S^-$ of saddle type belonging to the energy
level $X(x,y)=\bar{d}$.
\item The   energy level $X(x,y)=\bar{d}$ contains  two heteroclinic orbits given by $\gamma^{\text{up}}:= W^u(Q^-)=W^s(Q^+)$ and $\gamma^{\text{down}}:=W^u(Q^+)=W^s(Q^-)$.
\item The Hamiltonians $U^\pm$ in~(\ref{eq:unperturbed_Hamiltonian2}) satisfy
\begin{equation*}
(V^+)'(0)>0;\; (V^-)'(0)<0,
\end{equation*}
and so $(0,0)$ is an invisible quadratic tangency for both vector fields
$\U^\pm$ in~(\ref{eq:field2}). Following~\cite{Kuznetsov04,GuaSeaTei11}, we call the point
$(0,0)$ an invisible fold-fold.
\item System~(\ref{eq:field2}) possesses a continuum of (piecewise-smooth)
continuous periodic orbits surrounding the origin. These can be parameterized by the
Hamiltonian $U$ and have the form
\begin{equation}
\Lambda_c=\left\{ (u,v)\in\RR^2\,\vert\,U(u,v)=c \right\},\;0<c\le \bar{c}.
\label{eq:periodic_orbits_scattering}
\end{equation}
\end{enumerate}
The main purpose of this paper is to study the dynamics around one of the
heteroclinic orbits of system \eqref{eq:field1}. From now on, we focus on the upper one
\begin{equation*}
\gamma^{\text{up}}:=W^u(Q^-)\cap W^s(Q^+)=\left\{
(x,y)\in\RR^2\,\vert\,X(x,y)=\bar{d},\,y\ge 0 \right\}.
\end{equation*}
There we consider the following parameterization
\begin{equation}
\gamma^{\text{up}}=\left\{ \su(t),\,t\in\RR \right\}
\label{eq:unpert_het_orbit}
\end{equation}
where $\su(t)$ is the solution of system~(\ref{eq:field1}) satisfying
\begin{equation}\begin{aligned}
&\su(0)=(0,y_h)\in\Sigma\\
&\lim_{t\to\pm\infty}\su(t)=Q^\pm,
\end{aligned}
\label{eq:unpert_het_orbit_parameterization}
\end{equation}
where $(0,y_h)$, $y_h=\sqrt{\bar{d}}$, is given by
\begin{equation*}
(0,y_h)=W^u(Q^-)\cap\Sigma=W^s(Q^+)\cap\Sigma.
\end{equation*}

Before introducing the non-autonomous perturbation which will couple both
systems described above, we outline the invariant objects of the cross product
of both systems (see figure~\ref{fig:invariant_obj_unper_coupled}), which has a
(piecewise-smooth) Hamiltonian
\begin{equation}
H_0(u,v,x,y)=U(u,v)+X(x,y).
\end{equation}
\begin{figure}
\begin{center}
\includegraphics[width=1\textwidth]
{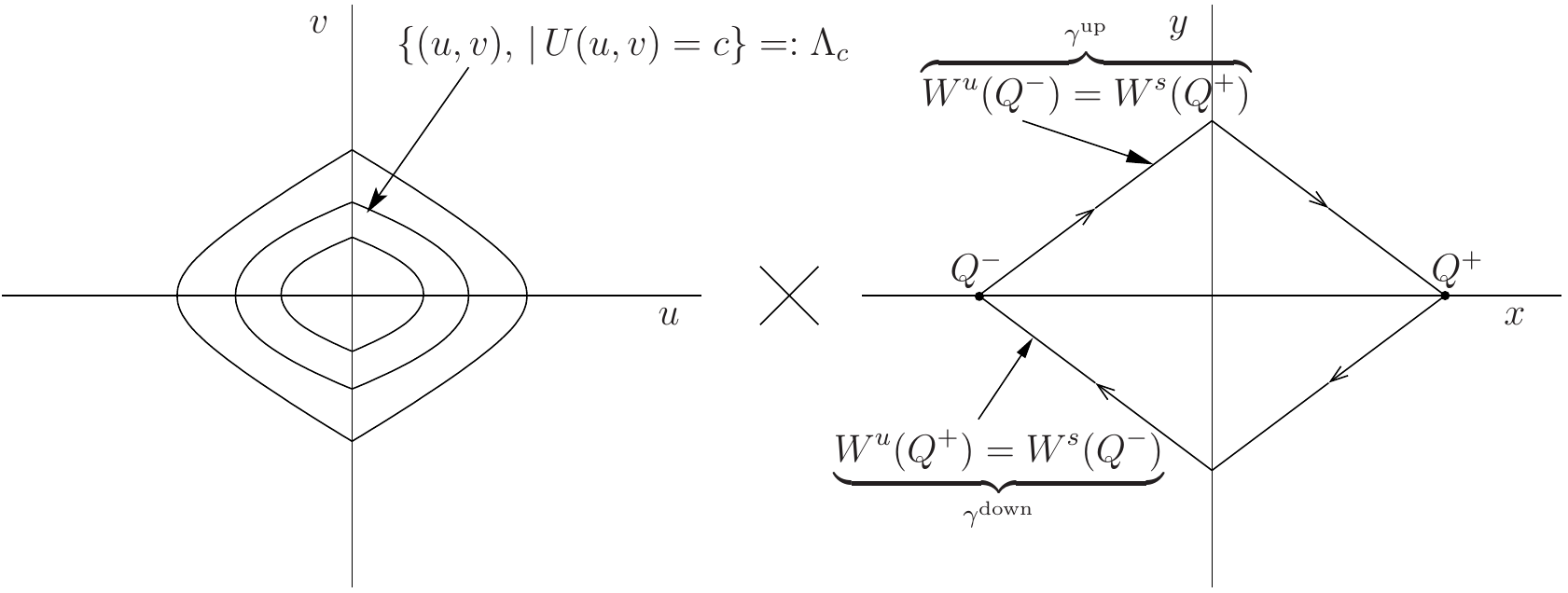}
\end{center}
\caption{Invariant objects for the unperturbed coupled system.}
\label{fig:invariant_obj_unper_coupled}
\end{figure}
Even if the periodic orbits $\Lambda_c\times Q^\pm$ are only continuous
manifolds, as $Q^\pm$ are hyperbolic critical points, they can be considered
hyperbolic periodic orbits. Moreover, their stable and unstable (non-regular)
manifolds, $W^{s,u}(\Lambda_c\times Q^\pm)$, are given by $\Lambda_c\times
W^{s,u}(Q^\pm)$. Furthermore, the stable/unstable manifold of each
periodic orbit $\Lambda_c\times Q^+$ coincides with the unstable/stable manifold
of the periodic orbit $\Lambda_c\times Q^-$, respectively, and hence there exist
(non-regular) heteroclinic manifolds connecting these periodic orbits.\\
Also of interest are the manifolds $\Lambda^\pm$ given by the cross
product of the critical points $Q^\pm$ with the union of all periodic orbits
\begin{equation*}
\begin{aligned}
\Lambda^+&=\bigcup
_{c\in[c_1,c_2]}\Lambda_c\times Q^+\\
&=\left\{ \left( u,v,Q^+\right)\,\vert\, U(u,v)=c,\, c_1\le c\le c_2,\,\right\}\\
\Lambda^-&=\bigcup_{c\in[c_1,c_2]}\Lambda_c\times Q^-\\
&=\left\{ \left( u,v,Q^-\right)\,\vert\,U(u,v)=c,\,c_1\le c\le c_2\right\},
\end{aligned}
\end{equation*}
for some $0<c_1,\,c_2<\bar{c}$. In figure~\ref{fig:norm_hyp_mani} we show these two manifolds schematically.\\
\begin{figure}
\begin{center}
\includegraphics[width=1\textwidth]
{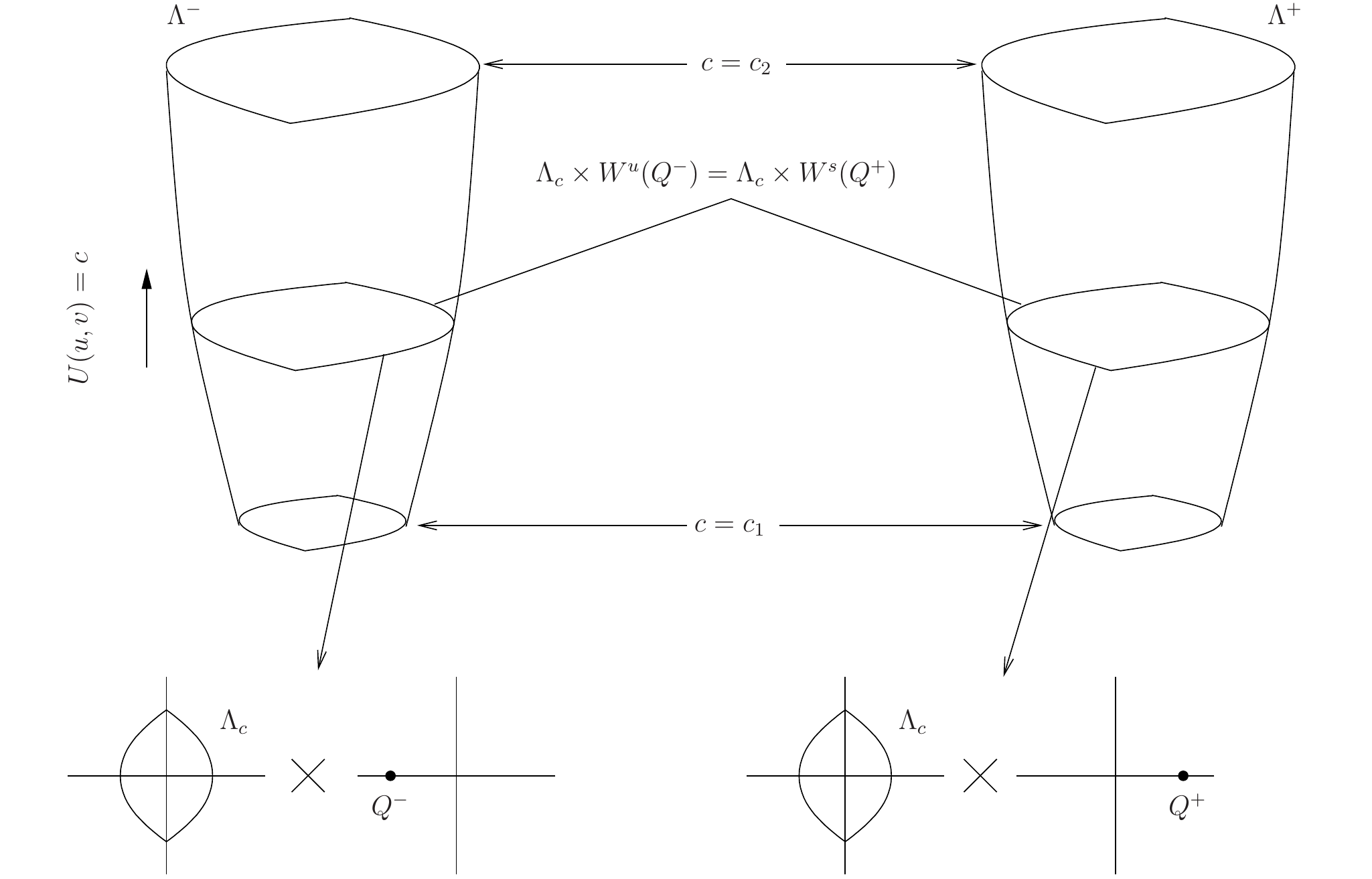}
\end{center}
\caption{Schematic representation of the manifolds $\Lambda^+$ and $\Lambda^-$.}
\label{fig:norm_hyp_mani}
\end{figure}

\subsection{The coupled system}\label{sec:coupled_system}
We now consider the system given by coupling
systems~(\ref{eq:field1}) and (\ref{eq:field2}) through a non-autonomous
$T$-periodic Hamiltonian perturbation $\varepsilon h(u,v,x,y,s)\in
C^\infty(\RR^5)$ satisfying 
\begin{equation*}
h(u,v,x,y,s)=h(u,v,x,y,s+T),\;\forall (u,v,x,y,s)\in\RR^5.
\end{equation*}
Therefore, the perturbed system is a non-autonomous $T$-periodic in time Hamiltonian system with Hamiltonian:
\begin{equation}
H_\varepsilon(\tz):=U(u,v)+X(x,y)+\varepsilon h(\tz),\;\varepsilon>0,
\label{eq:general_Hamiltonian_scattering}
\end{equation}
where $\tz=(z,s)=(u,v,x,y,s)$, $s\in\T$ and $\T=\RR\backslash T$.  To study the
dynamics of the corresponding Hamiltonian system we will work in the extended
state space $\RR^4\times\T$, adding the time $s$ as a state variable.  Note that
we retain $\T$, rather than the usual circle (modulus $1$), because $T$ is very
important in applications.\\
Recalling that the unperturbed systems~(\ref{eq:field1}) and~(\ref{eq:field2}) are
piecewise-smooth, 
the coupled system is defined in four
partitions of $\RR^4\times\T$ as follows
\newcommand{\Size}{\hspace{0.3\textwidth}}
\begin{equation}
\begin{aligned}
\left(
\begin{array}{c}
\dot{u}\\\dot{v}\\\dot{x}\\\dot{y}
\end{array}
\right)&=\left\{
\begin{aligned}
&\tJ\nabla \left( U^++X^+ +\varepsilon h\right)\left(\tz\right)\\
&\Size\text{if }\tz\in S^+\cup\Sigma^+\times S^+\cup\Sigma^+\times\T\\
&\tJ\nabla \left( U^++X^-+\varepsilon h\right)\left(\tz\right)\\
&\Size\text{if }\tz\in S^+\cup\Sigma^+\times S^-\cup\Sigma^-\times\T\\
&\tJ\nabla \left( U^-+X^-+\varepsilon h\right)\left(\tz\right)\\
&\Size\text{if }\tz\in S^-\cup\Sigma^-\times S^-\cup\Sigma^-\times\T\\
&\tJ\nabla \left( U^-+X^++\varepsilon h\right)\left(\tz\right)\\
&\Size\text{if }\tz\in S^-\cup\Sigma^-\times S^+\cup\Sigma^+\times\T
\end{aligned}
\right.\\
\dot{s}&=1,
\end{aligned}
\label{eq:general_field_scattering}
\end{equation}
where $\tz=(z,s)=(u,v,x,y,s)$, $s\in\T$ and
\begin{equation*}
\tJ=\left( \begin{array}{cccc}
0&1&0&0\\-1&0&0&0\\0&0&0&1\\0&0&-1&0
\end{array} \right).
\end{equation*}
These differential equations define four different autonomous flows
$\tp^{\pm\pm}(t;\tz_0;\varepsilon)$ in the extended phase space. Letting
$\varphi^{\pm\pm}(t;t_0,z_0;\varepsilon)$ denote the corresponding non-autonomous
flows such that $\varphi^{\pm\pm}(t_0;t_0,z_0;\varepsilon)=z_0$, we write
$\tp^{\pm\pm}(t;\tz_0;\varepsilon)$ satisfying
$\tp^{\pm\pm}(0;\tz_0;\varepsilon)=\tz_0$ as
\begin{equation*}
\tp^{\pm\pm}(t;\tz_0;\varepsilon)=\left( \phi^{\pm\pm}(t;\tz_0;\varepsilon),s_0+t \right),
\end{equation*}
where $\phi^{\pm\pm}(t;\tz_0;\varepsilon)$ are such that
\begin{equation*}
\varphi^{\pm\pm}(t;t_0,z_0;\varepsilon)=\phi^{\pm\pm}(t-t_0;\tz_0;\varepsilon).
\end{equation*}

Proceeding as we did for the systems $\U$ and $\X$, we can define the solution,
$\tp(t;\tz_0;\varepsilon)$, of the coupled
system~(\ref{eq:general_field_scattering}) satisfying
$\tp(0;\tz_0;\varepsilon)=\tz_0$ by properly concatenating the flows
$\tp^{+\pm}$ and $\tp^{-\pm}$ when the $4$-dimensional switching manifold $u=0$
is crossed, and $\tp^{\pm+}$ and $\tp^{\pm-}$ when $x=0$ is crossed. Following
this definition, we will omit from now on the indices $\pm$ and write just
$\tp$.  Note that $\tp$ is not differentiable at those times corresponding to
the crossings with the switching manifolds, although it is as smooth as the
flows $\tp^{\pm\pm}$ when restricted to the open domains given in the respective
branches.

Note that, for $\varepsilon=0$, all the invariant objects described
in~\S\ref{sec:uncoupled_systems} for the cross product of the
systems~(\ref{eq:field1}) and~(\ref{eq:field2}) become invariant objects of
system \eqref{eq:general_field_scattering} with one dimension more in the
extended phase space.  The study of these objects and their persistence after
adding the perturbation will be the goal of
section~\S\ref{sec:invariant_objects_persistence}.

\section{Some notation and properties}\label{sec:notation_properties_scattering}
\subsection{Impact map associated with $u=0$}
\label{sec:impact_map_scattering}
In $\RR^4\times\T$ let us define the sections
\begin{equation}
\tS=\Sigma\times\RR^2\times\T=\left\{ (0,v,x,y,s),\,(v,x,y,s)\in\RR^3\times\T \right\}\label{eq:section_Sigma_extended}
\end{equation}
and
\begin{align}
&\tS^+=\Sigma^+\times\RR^2\times\T=\left\{ (0,v,x,y,s)\in\tS\,\vert\,v>0 \right\}\\
&\tS^-=\Sigma^-\times\RR^2\times\T=\left\{ (0,v,x,y,s)\in\tS\,\vert\,v<0 \right\}.\label{eq:section_Sigmam_extended}
\end{align}
Note that $\tS$ is a switching manifold of
system~(\ref{eq:general_field_scattering}) in the extended phase space, and it will
play an important role in 
what follows.

We wish to define the {\it impact map} $P_\varepsilon$ associated with $\tS$,
that is, the Poincar\'{e} map from $\tS$ to itself (see
figure~\ref{fig:poincare_map_scattering}).  This map is as regular as the flows 
$\tp^{\pm\pm}$
restricted to some open domains, and this will allow us to apply classical
results from perturbation theory of smooth systems that will be useful in our
construction.
\begin{figure}
\begin{center}
\includegraphics[width=1\textwidth]
{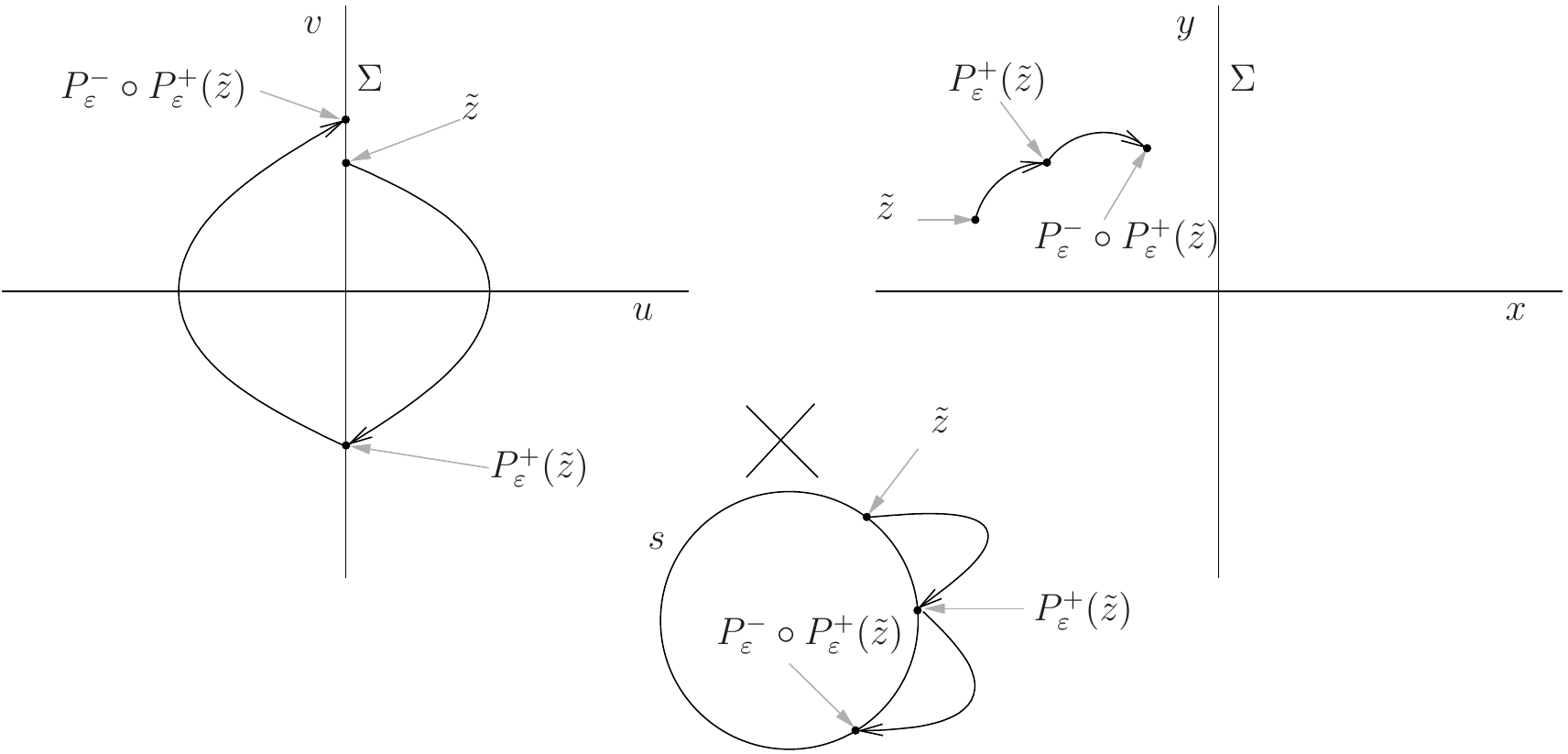}
\end{center}
\caption{Schematic representation of the maps $P_\varepsilon^-$,
$P_\varepsilon^+$.}
\label{fig:poincare_map_scattering}
\end{figure}
The impact map $P_\varepsilon$ is given by the composition of two intermediate maps,
\begin{align}
P^+_\varepsilon&:\tO_{P_\varepsilon^+}\subset\tS^+\longrightarrow\tS^-
\label{eq:impact_mapp}\\
P^-_\varepsilon&:\tO_{P_\varepsilon^-}\subset\tS^-\longrightarrow\tS^+
\label{eq:impact_mapm}
\end{align}
defined as
\begin{align*}
P_\varepsilon^+(\tz)&=\tp(t_{\tS^-};\tz;\varepsilon)\\
P_\varepsilon^-(\tz)&=\tp(t_{\tS^+};\tz;\varepsilon),
\end{align*}
where $t_{\tS^\pm}$ are the smallest values of $t>0$ such that
$\tp(t;\tz;\varepsilon)\in\tS^{\mp}$. The domains $\tO_{P_\varepsilon^\pm}$ where these maps
are smooth  are the open sets given by points in $\tS^\pm$ whose
trajectories first impact the switching manifold $\tS$ given by $u=0$ rather
than the switching manifold $x=0$. That is,
\begin{equation}
\tO_{P_\varepsilon^+}=\left\{ \tz\in
\tS^+,\vert\,\Pi_x\left(
\tp(t;\tz;\varepsilon)
\right)\ne0 \,\forall t\in[0,t_{\tS^-}] \right\}
\label{eq:open_set_Pp}
\end{equation}
and
\begin{equation*}
\tO_{P_\varepsilon^-}=\left\{ \tz\in
\tS^-,\vert\,\Pi_x\left(
\tp(t;\tz;\varepsilon)
\right)\ne0 \,\forall t\in[0,t_{\tS^+}] \right\}.
\end{equation*}
\begin{remark}
Due to the form of the Hamiltonian $X$ given
in~(\ref{eq:unperturbed_Hamiltonian1}), for $\varepsilon\ge0$ small enough the
flow crosses the switching manifold $x=0$ for increasing $x$ when $y>0$ and for 
decreasing $x$ for $y<0$. Hence, the points in $\tO_{P_\varepsilon^+}$ and
$\tO_{P_\varepsilon^-}$ can be arbitrarily close to $x=0$ when $xy\ge0$ (possibly containing some part of the
segment $x=0$) but not when $xy<0$. This implies that the sets
$\tO_{P_\varepsilon^\pm}$ consist of two connected components separated by the
switching manifold $x=0$, $\RR^2\times\Sigma\times\T$. How these sets
are separated from $x=0$ depends on the time required to reach the switching
manifold $u=0$.
\end{remark}
Let us consider an open set,
\begin{equation}
\tO_{P_\varepsilon}\subset \tO_{P_\varepsilon^+}\cup\tO_{P_\varepsilon^-}\subset\tS,
\label{eq:open_set}
\end{equation}
and define the Poincar\'e impact map
\begin{equation*}
P_\varepsilon:\tO_{P_\varepsilon}\subset\tS\longrightarrow\tS
\end{equation*}
as
\begin{equation*}
P_{\varepsilon}(0,v,x,y,s)=\left\{
\begin{aligned}
&P_\varepsilon^+\circ P_\varepsilon^-(0,v,x,y,s)&&\text{if }\left( 0,v,x,y,s
\right)\in\tO_{P_\varepsilon}\cap\tS^-\\
&P_\varepsilon^-\circ P_\varepsilon^+(0,v,x,y,s)&&\text{if }\left( 0,v,x,y,s
\right)\in\tO_{P_\varepsilon}\cap\tS^+
\end{aligned}\right.
\end{equation*}
To simplify notation, when considering points
$(0,v,x,y,s)\in\tS\subset\RR^4\times\T$, we introduce the new variable
$\tw=(v,x,y,s)$. Then points $\tz$ in $\RR^4\times\T$ will be written as
$\tz=(0,\tw)$. In addition we consider the set $\tO_{P_\varepsilon}$ in
$\RR^3\times\T$ and write the impact map
\begin{equation}
P_\varepsilon:\tO_{P_\varepsilon}\longrightarrow\RR^3\times\T
\label{eq:impact_map_scattering}
\end{equation}
as
\begin{equation}
P_{\varepsilon}(\tw)=\left\{
\begin{aligned}
&P_\varepsilon^+\circ P_\varepsilon^-(\tw)&&\text{if
}\tw\in\tO_{P_\varepsilon}\cap \left\{ (v,x,y,s)\in\RR^3\times\T,\,v<0 \right\}\\
&P_\varepsilon^-\circ P_\varepsilon^+(\tw)&&\text{if
}\tw\in\tO_{P_\varepsilon}\cap \left\{ (v,x,y,s)\in\RR^3\times\T,\,v>0 \right\}
\end{aligned}\right.
\label{eq:impact_map_explicit_composition}
\end{equation}
with
\begin{align*}
\tO_{P_\varepsilon}=\Big\{ &\tw=(v,x,y,s)\in
\left([-v_2,-v_1]\cup[v_1,v_2]\right)\times\RR^2\times\T\\
&\vert\,\Pi_x\left(
\tp(t;\left( 0,\tw \right);\varepsilon)
\right)\ne0 \,\forall  t\in\left[
0,\Pi_s\left( P_\varepsilon(\tw) \right) -s\right]\Big\}.
\end{align*}
Note that the map $P_\varepsilon$ is invertible in
$\tO_{P_\varepsilon^{-1}}:=P_\varepsilon(\tO_{P_\varepsilon})$ and hence we can consider
\begin{equation}
P_\varepsilon^{-1}:\tO_{P_\varepsilon^{-1}}\subset\RR^3\times\T\longrightarrow
\RR^3\times\T.
\label{eq:impact_map_inverse}
\end{equation}
\begin{remark}\label{rem:smoothness_of_impact_map}
Although the maps $P^+_\varepsilon$, $P^-_\varepsilon$, $P_{\varepsilon}$ can be
defined in a wider zone of the extended phase space, their restriction to the
domains $\tO_{P_\varepsilon^+}$, $\tO_{P_\varepsilon^-}$, $\tO_{P_\varepsilon}$
will be essential in our contructions.  The reason is that the maps
$P^+_\varepsilon$, $P^-_\varepsilon$, $P_{\varepsilon}$ are, in the domains
$\tO_{P_\varepsilon^+}$, $\tO_{P_\varepsilon^-}$, $\tO_{P_\varepsilon}$
respectively, as smooth as the flows $\tp^{\pm\pm}(t;\tz;\varepsilon)$
restricted to $S^\pm\times S^\pm\times\T$. Therefore, we can apply to them
classical results of smooth dynamical systems which need regularity assumptions.
\end{remark}

If $\varepsilon=0$, we can provide an explicit expression for the impact map as
follows. The flow $\tp(t;\tz_0;0)$ consists of the uncoupled flows $\phi_\U$ and
$\phi_\X$ described in~(\ref{eq:unperturbed_flows}) but extended by adding the
time $s$ as a state variable. From conditions~\condsC, the phase portrait of
system $\U$ is formed by the continuum of periodic orbits $\Lambda_c$ which, due
to the form of the Hamiltonian $U$, is symmetric with respect to $v=0$.  Hence
the maps $P_0^\pm$ can be written as
\begin{equation*}
P_0^\pm(\tz)=\left( 0,-v,\phi_\X(\alpha^\pm(v);x,y),s+\alpha^\pm(v) \right),
\end{equation*}
where
\begin{equation}
\alpha^\pm(v)=2\mspace{-30mu}\mathop{\int}_0^{(V^\pm)^{-1}(c)}
\mspace{-30mu}\frac{1}{\sqrt{2(c-V^\pm(x))}}dx,\quad c=U(0,v)=\frac{v^2}{2}
\label{eq:alphap}
\end{equation}
are the times taken by the flow $\phi_\U(t;0,\pm v)$, with $v>0$, to reach
$\Sigma^\mp$.
Hence, when $\varepsilon=0$, the impact map takes the form
\begin{equation*}
P_0(\tw)=\left( v,\phi_\X(\alpha( v);x,y),s+\alpha(
v) \right),
\end{equation*}
where
\begin{equation}
\alpha(v)=\alpha^+(\vert v\vert)+\alpha^-(-\vert v\vert)
\label{eq:period_unperturbed_po}
\end{equation}
is the period of the orbit of system~(\ref{eq:field2}) with $c=U(0,v)$, and
$\phi_\X(t;x,y)=\phi^\pm_\X(t;x,y)$ if $(x,y)\in \Sigma^\pm\cup S^\pm$.
\subsection{Impact sequence}
\label{sec:impact_sequence}
Let $\tw \in \tO_{P_\varepsilon}$ and $\varepsilon \ge0$ small enough.
Proceeding as in~\cite{Hog89,GraHogSea12}, we define the direct sequence of
impacts $\tw_i$ associated with the section $\tS$ as
\begin{equation}
\tw_i=
\left\{
\begin{aligned}
&P^+_{\varepsilon}(\tw_{i-1})&&\text{if
}\tw_{i-1}\in\tO_{P_\varepsilon^+}\\
&P^-_{\varepsilon}(\tw_{i-1})&&\text{if
}\tw_{i-1}\in\tO_{P_\varepsilon^-},
\end{aligned}\right.\label{eq:impact_sequence_direct}
\end{equation}
with $i\ge0$ and $\tw_{0}=\tw$.
We also define the inverse sequence of impacts, if they exist, as
\begin{equation}
\tw_{i}=
\left\{
\begin{aligned}
&(P^+_{\varepsilon})^{-1}(\tw_{i+1})&&\text{if
}\tw_{i+1}\in P^+_\varepsilon(\tO_{P_\varepsilon^+})\\
&(P^-_{\varepsilon})^{-1}(\tw_{i+1})&&\text{if
}\tw_{i+1}\in P^-_\varepsilon(\tO_{P_\varepsilon^-}),
\end{aligned}\right.\label{eq:impact_sequence_inverse}
\end{equation}
with $i<0$.
In general, this is a finite sequence, and is defined up to the $n$th iterate
such that
\begin{align*}
\tw_{n}\notin \tO_{P_\varepsilon^+}\cup \tO_{P_\varepsilon^-},\,n>0\\
\tw_{n}\notin
P_\varepsilon^-\left(\tO_{P_\varepsilon^-}\right)\cup
P_\varepsilon^+\left(\tO_{P_\varepsilon^+}\right),\,n<0
\end{align*}
That is, we consider all the impacts with the switching surface $u=0$ of the
trajectory associated with system~(\ref{eq:general_field_scattering}) with
initial condition $\tz$ that are previous to the first impact with $x=0$, both
forwards and backwards in time. When this occurs, then it is possible to extend
the sequence by properly concatenating the flow.

In general, one can  extend the definition of the impact sequence to arbitrary
points in $\tz\in S^\pm\times S^\pm\times\T$, no necessarily  located at $\tS$.
This can be done by flowing $\tz$ by $\tp$ both forwards and backwards in time
until the switching manifold $\tS$ is reached at the points
$\tz_{1}\in\tO_{P_\varepsilon}$ and $\tz_2\in\tO_{P_\varepsilon}$, respectively.
Then, one just considers the direct and inverse impact sequence of associated
with the points $\tz_1$ and $\tz_2$, respectively.

Note that the impact sequence can be used to obtain explicit expressions for the
flows (see~\cite{Gra12} for details).

\section{Invariant manifolds and their persistence}\label{sec:invariant_objects_persistence}
\subsection{Unperturbed case in the extended phase space}\label{sec:invariant_objects_unperturbed_flow}
We consider invariant objects of system~(\ref{eq:general_field_scattering}) when
$\varepsilon=0$.  The cross products of the hyperbolic critical points $Q^\pm$
and the periodic orbits $\Lambda_c$ give rise to two families of invariant
$2$-dimensional   tori $\tTT_c^\pm$ of the form
\begin{align}
\tTT_c^\pm&=\Lambda_c\times Q^\pm\times\T=\nonumber\\
&\left\{
(u,v,x,y,s)
\,\vert\,U(u,v)=c,\,(x,y)=Q^\pm\!,s\in\T
\right\}
\label{eq:2d_tori}
\end{align}
with $0<c\le\bar{c}$. These tori are only continuous manifolds, because of the
singularity of the Hamiltonian $U$ at $u=0$ (see
figure~\ref{fig:norm_hyper_mani_extended_ps}).
\begin{figure}
\begin{center}
\includegraphics[width=1\textwidth]
{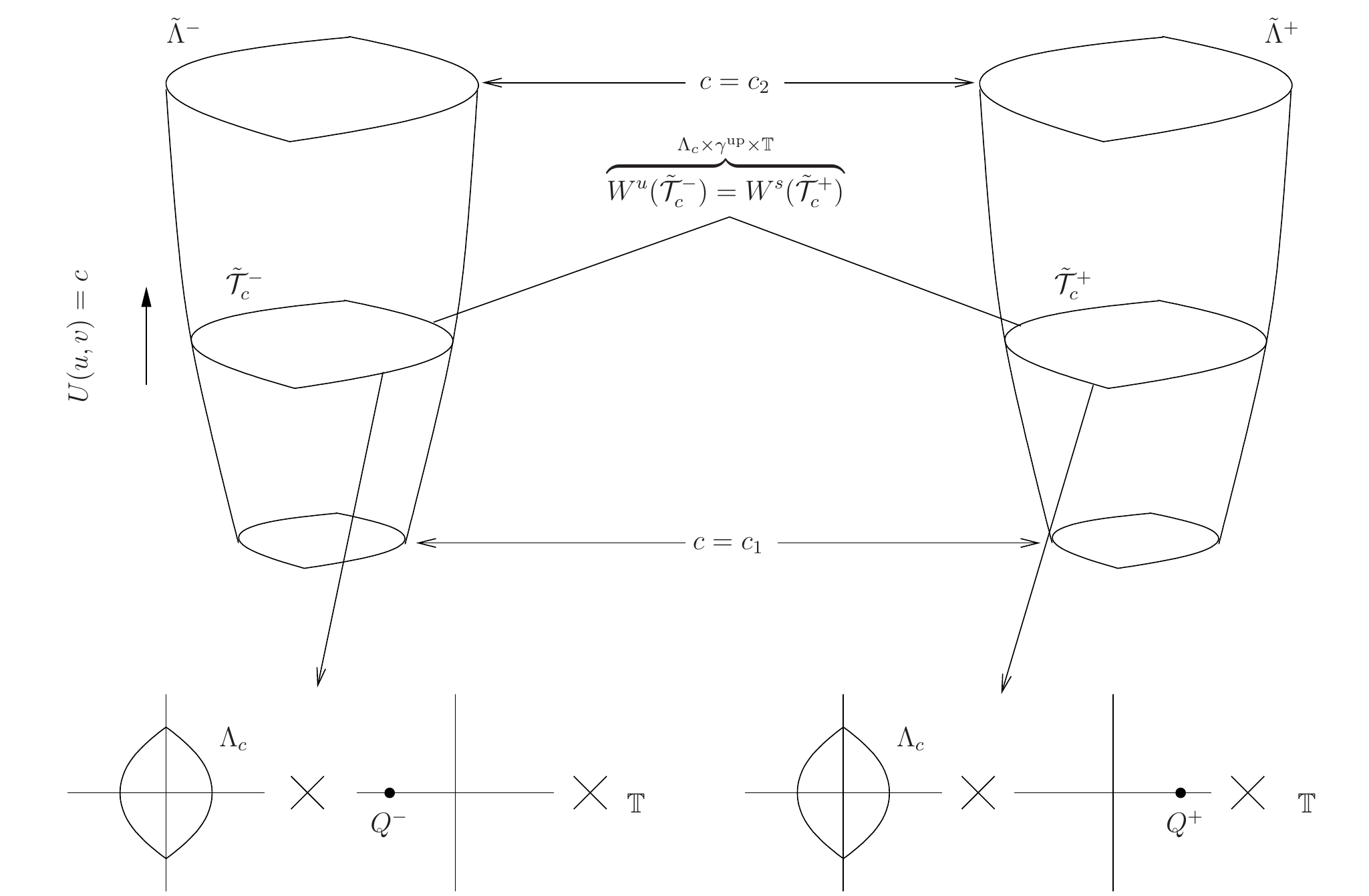}
\end{center}
\caption{Scheme of the manifolds $\tL^\pm$, the tori $\tTT_c^\pm$ and their
invariant manifolds.}
\label{fig:norm_hyper_mani_extended_ps}
\end{figure}
We parameterize $\tTT_c^\pm$ by
\begin{equation}
\tTT_c^\pm=\left\{ (\phi_\U(\theta \alpha(v);0,v),Q^\pm, s )
,\,\theta\in\Tu,\,v\in\RR,\,U(0,v)=c,\,s\in\T \right\},
\label{eq:tori_parameterized}
\end{equation}
where $\alpha(v)$ is given in~(\ref{eq:period_unperturbed_po}),
$\Tu=\RR\backslash\mathbb{Z}$ is the usual circle and $\phi_\U$ is the flow
associated with system~(\ref{eq:field2}). Then the flow $\tp$ restricted to
these tori becomes
\begin{equation*}
\begin{aligned}
\tp(t;&\phi_\U(\theta\alpha(v);0,v),Q^\pm,s;0)\\
&=\left(\phi_\U\left(
\left(\theta+\frac{t}{\alpha(v)}\right)\alpha(v);0,v\right),Q^\pm,s+t\right),\,\forall
t\in\RR,
\end{aligned}
\end{equation*}
and hence $\tTT_c^\pm$ is invariant. 
For each of these invariant tori there exist $3$-dimensional continuous manifolds
\begin{align*}
W^s&(\tTT_c^+)=W^u(\tTT_c^-)\nonumber\\
&=\Lambda_c\times W^s(Q^-)\times\T=\Lambda_c\times W^u(Q^+)\times\T\nonumber\\
&=\left\{
(\phi_\U(\theta\alpha(v);0,v),\su(\xi),s),\vert\,U(0,v)=c,\,\theta\in\Tu,\,\xi\in\RR,\,s\in\T
\right\},
\end{align*}
where $\su(\xi)$, given
in~(\ref{eq:unpert_het_orbit})-(\ref{eq:unpert_het_orbit_parameterization}),
parameterizes the upper heteroclinic connection $\gamma^{\text{up}}$ of system
$\X$ (see figure~\ref{fig:norm_hyper_mani_extended_ps}).  The flow $\tp$
restricted to these manifolds can be written as
\begin{equation*}
\begin{aligned}
\tp(t;&\phi_\U(\theta\alpha(v);0,v),\su(\xi),s;0)\\
&=\left(\phi_\U\left( \left(\theta+\frac{t}{\alpha(v)}\right)
\alpha(v);0,v\right),\su(\xi+t),s+t \right),\,\forall t\in\RR,
\end{aligned}
\end{equation*}
and hence they are invariant. 
Moreover, for any 
$\tz=\left( \phi_\U(\theta\alpha(v);0,v),\su(\xi),
s \right)\in
W^s(\tTT_c^+)=W^u(\tTT_c^-)$, 
there exists two points
\begin{equation*}
\tz^\pm=\left( \phi_\U(\theta\alpha(v);0,v),Q^\pm,s \right)\in\tTT_c^\pm
\end{equation*}
such that
\begin{equation*}
\lim_{t\to\pm\infty}\left\vert
\tp(t;\tz;0)-\tp(t;\tz^\pm;0)\right\vert=\lim_{t\to\pm\infty}\left(
0,0,\su(\xi+t)-Q^\pm,0\right)=0.
\end{equation*}
In addition, as the points $Q^\pm$ are hyperbolic for the flows $\phi^\pm_\X$,
then there exist positive constants $K^\pm$ such that
\begin{equation}
\left\vert \phi(t;\tz;0)-\phi(t;\tz^\pm;0) \right\vert< K^\pm e^{-\lambda^\pm \vert
t\vert},\, t \to\pm\infty,
\label{eq:hyperbolicity_flow}
\end{equation}
where $\pm\lambda^+$ and $\pm\lambda^-$ are the eigenvalues of $D\X^+(Q^+)$ and
$D\X^-(Q^-)$, respectively. \\
Although $W^{u,s}(\tTT_c^\pm)$  are just continuous manifolds, they are the
stable and unstable manifolds of $\tTT_c^\pm$. As they coincide,
$\tg_c:=W^s(\tTT_c^+)=W^u(\tTT_c^-)$ will be a $3$-dimensional heteroclinic
manifold between the tori $\tTT_c^-$ and $\tTT_c^+$.
The lower heteroclinic connection mentioned in condition
C.2 leads to similar heteroclinic manifolds between the tori $\tTT_c^+$
and $\tTT_c^-$.

Following~\cite{DelLlaSea00}, considering all the tori $\tTT^+_c$ and $\tTT^-_c$
together we end up with two $3$-dimensional continuous manifolds
\begin{align}
\tL^\pm&=\bigcup_{c\in[c_1,c_2]}\tTT_c^\pm=\bigcup
_{c\in[c_1,c_2]}\Lambda_c\times Q^\pm\times\T\nonumber\\
&=\left\{ \left( 
u,v,x,y,s
\right),\,c_1\le
U(u,v)\le c_2,\,(x,y)=Q^\pm,\, s\in\T\right\}\nonumber\\
&=\left\{ \left( \phi_\U(\theta\alpha(v);0,v),Q^\pm, s \right),\,\theta\in\Tu,\,c_1\le
U(0,v)\le c_2,\, s\in\T \right\}
\label{eq:normally_hyper_manifolds}
\end{align}
with $0<c_1<c_2<\bar{c}$, shown  schematically in
figure~\ref{fig:norm_hyper_mani_extended_ps}. These manifolds have
$4$-dimensional stable and unstable continuous manifolds given by
\begin{align}
W^{s}&(\tL^+)=W^{u}(\tL^-)=\bigcup_{c\in[c_1,c_2]}W^s(\tTT^+_c)=\bigcup_{c\in[c_1,c_2]}W^u(\tTT^-_c)\nonumber\\
&=\left\{
\left( \phi_\U(\theta\alpha(v);0,v),\su(\xi), s
\right),\,\theta\in\Tu,\,c_1\le U(0,v)\le c_2,\,\xi\in\RR,\,s\in\T
\right\}.
\label{eq:unperturbed_unstable_manifold_po}
\end{align}
As they coincide, $\tg:=W^s(\tL^+)=W^u(\tL^-)$ will be a $4$-dimensional heteroclinic continuous
manifold between the manifolds $\tL^\pm$.

It will be convenient to write the manifolds $\tL^\pm$ in terms of a
\emph{reference manifold} $N$ (see~\cite{DelLlaSea08}) as follows. Let
\begin{equation}
N=\left\{ (\theta,v,s)\in \Tu\times[v_1,v_2]\times\T\right\}
\label{eq:reference_manifold_flow}
\end{equation}
where $c_i=U(0,v_i)$, and consider two homeomorphisms
\begin{equation}
\begin{array}{cccc}
F_0^\pm:&N&\longrightarrow&\tL^\pm\\
&(\theta,v,s)&\longmapsto&(\phi_\U(\theta\alpha(v);0,v),Q^\pm,s).
\end{array}
\label{eq:unperturbed_reference_map}
\end{equation}
Hence the continuous manifolds $\tL^\pm$ are given by $\tL^\pm=F_0^\pm(N)$.
This will later allow us to identify points on the perturbed manifolds
$\tL_\varepsilon^\pm$ in terms of the same coordinates $(\theta,v,s)$ if
$\varepsilon>0$ is small enough.
Note that $F_0^\pm$ are in fact diffeomorphisms as long as
$\theta\in\left( 0,\frac{\alpha^+(v)}{\alpha(v)} \right)\cup\left(
\frac{\alpha^+(v)}{\alpha(v)},1 \right)$ as $\phi_\U(\theta\alpha(v);0,v)$ hits the
switching manifold given by $u=0$ for $\theta=0$,
$\theta=\frac{\alpha^+(v)}{\alpha(v)}$ and
$\theta=1$.

\subsection{Invariant manifolds for the unperturbed impact map}
The fact that the manifolds $\tL^\pm$ are only continuous manifolds will prevent
us from applying classical perturbation theory for hyperbolic manifolds
\cite{Fen72,Fen74,Fen77,HirPugShu77,DelLlaSea00} to study their
persistence for $\varepsilon>0$. In the smooth case, the usual tool to prove persistence following a
non-autonomous periodic perturbation is the stroboscopic Poincar\'e map, which
integrates the system during a certain time $T$, the period of the perturbation.
However, in our case, such a map becomes unwieldy because, for a given time, the
number of occasions that the switching manifold can be crossed is unknown and can even be arbitrarily large. 

Instead, we will consider the Poincar\'e impact map defined
in~\S\ref{sec:impact_map_scattering}, which is a smooth map as regular as the
flows $\tp^{\pm\pm}$ restricted to their respective domains. We first describe
the invariant objects introduced above for the impact map restricted to $\tS^+$
when $\varepsilon=0$. As mentioned in~\S\ref{sec:impact_map_scattering}, we will
identify the switching manifold $\tS^+$ with the set $[v_1,v_2]\times \RR^2\times\T$ and omit the repetition of
the coordinate $u=0$ for points in $\tS$. 
We then consider the unperturbed impact map
\begin{equation*}
P_0:\tO_{P_0}\cap\left\{ 
(v,x,y,s)\in\RR^3\times\T,\,v>0 \right\}\longrightarrow \RR^3\times\T.
\end{equation*}
Taking into account that 
\begin{equation}
P_0(v,Q^\pm,s)=(v,Q^\pm,s+\alpha(v)),
\label{eq:unperturbed_impact_map}
\end{equation}
and letting $U\left(0,v \right)=c$, the invariant tori $\tTT_c^\pm$ give rise to 
smooth invariant curves
\begin{align}
\tC_c^\pm&=\left\{ v \right\}\times Q^\pm\times \T\nonumber\\
&=\left\{ 
(v,x,y,s)
\in\RR^3\times\T,\,U(0,v)=c,\, (x,y)=Q^\pm\right\}
\label{eq:invariant_cylinders}
\end{align}
with $0<c\le\bar{c}$. For those values of $c$ such that $m\alpha(v)=nT$, for some natural numbers $n$ and
$m$, the curves $\tC_c^\pm$ are filled by periodic points. The rest are formed
by points whose trajectories are dense in $\tC_c^\pm$.

For each of these curves there exist $2$-dimensional (locally smooth) continuous manifolds
\begin{align*}
W^{s}(\tC_c^+)&=W^u(\tC_c^-)\\
&=\left\{ \left( v,\su(\xi),s \right),\,U(0,v)=c,\,\xi\in\RR,\,s\in\T \right\}
\end{align*}
which are invariant under $P_0$:
\begin{equation*}
P_0(v,\su(\xi),s)=\left( v,\su(\xi+\alpha(v)),s+\alpha(v)) \right)\in
W^u(\tC_c^-)=W^s(\tC_c^+).
\end{equation*}
Moreover, due to the hyperbolicity of the points $Q^\pm$
(see~(\ref{eq:hyperbolicity_flow})), for any $\tw=(v,\su(\xi),s)\in
W^u(\tC_c^-)=W^s(\tC_c^+)$, there exist $\tw^\pm=(v,Q^\pm,s)\in \tC_c^\pm$ such
that
\begin{equation}
\vert P^n_0(\tw)-P_0^n(\tw^\pm)\vert =\left\vert \left(
0,\su\left(\xi+n\alpha\left(v\right)\right)-Q^\pm,0
\right)\right\vert< \hat{K}^\pm (\hlambda^{\pm})^{\vert
n\vert},\,n\to\pm\infty,
\label{eq:hyperbolicity_map}
\end{equation}
where $\hat{K}^\pm=K^\pm e^{-\lambda^\pm\xi}$,
$0<\hlambda^\pm=e^{-\lambda^\pm\alpha(v)}<1$ and $\lambda^\pm,K^\pm$ are defined
in~(\ref{eq:hyperbolicity_flow}).

Proceeding similarly as with the flow, we now consider the union over $c$ of all
the curves $\tC^\pm_c$ which become the smooth cylinders
\begin{equation}
\tG^\pm=\bigcup_{c\in[c_1,c_2]}\tC_c^\pm=\left\{
(v,Q^\pm,s)\,\vert\,v_1<v<v_2,\,s\in\T \right\}\\
\label{eq:normally_hype_man_impact_map}
\end{equation}
with $0<c_i\le\bar{c}$ and $c_i=U(0,v_i)$, $i=1,2$, which are invariant under
$P_0$. Note that the manifolds $\tG^\pm$ correspond to the intersection
\begin{equation*}
\tL^\pm\cap\tS^+=\left\{ 0 \right\}\times\tG^\pm.
\end{equation*}

Taking into account that $\tG^\pm$ are compact manifolds with boundaries given
by $v=v_1$ and $v=v_2$, there exist constants $\bmu >1$, $0<\blambda^\pm<1$,
$\blambda^\pm<\frac{1}{\bmu}$ (in fact $\bmu$ can be taken as close to one as
desired) such that, for all $\tw\in \tG^\pm$
\begin{equation}
\begin{array}{lcll}
 w\in E_{\tw}^s &\Longleftrightarrow & \vert DP_0^n(\tw) w\vert \le
K^\pm (\blambda^\pm)^n \vert w\vert,&\, n\ge0\\
 w\in E_{\tw}^u &\Longleftrightarrow & \vert DP_0^n(\tw) w\vert \le
K^\pm (\blambda^\pm)^{-n}\vert w\vert,&\,n\le 0\\
 w\in T_{\tw}\tG^\pm &\Longleftrightarrow& \vert DP^n_0(\tw) w\vert
\le
K^\pm (\bmu)^{\vert n\vert}\vert w\vert,&\,n\in\mathbb{Z}
\end{array}
\label{eq:normally_hyperbolicity_map}
\end{equation}
where $E_{\tw}^s, E_{\tw}^u$ and $T_{\tw}\tG^\pm$ are the stable, unstable and tangent bundles of $\tG^\pm$ respectively.
Assuming that $\alpha(v)$ is an increasing function of $v$, we can take
\begin{equation}
\blambda^\pm=e^{-\alpha(v_1)\lambda^\pm}.
\label{eq:blambda}
\end{equation}
Hence, $\tG^\pm$ are $C^\infty$ (as regular as the flows) normally hyperbolic manifolds for the unperturbed impact map $P_0$, with stable and unstable
invariant manifolds
\begin{align*}
W^u(\tG^-)&=W^s(\tG^+)=\bigcup_{c\in[c_1,c_2]}W^u(\tC_c^-)=\bigcup_{c\in[c_1,c_2]}W^s(\tC_c^+)\\
&=\left\{ (v,\su(\xi),s),\,v_1\le v\le v_2,\,\xi\in\RR,\,s\in\T
\right\}.
\end{align*}

\subsection{Perturbed case}\label{sec:invariant_objects_perturbed}
Let us now consider the persistence of the invariant manifolds introduced in the
previous section when $\varepsilon>0$ is small. We first focus on the normally
hyperbolic manifolds, $\tG^\pm$, for the map $P_0$.  As mentioned in
Remark~\ref{rem:smoothness_of_impact_map}, the impact map $P_{\varepsilon}$ is,
in $\tO_{P_\varepsilon}$, as regular as the flows $\tp^{\pm\pm}$ restricted to
$S^\pm\times S^\pm\times\T$. Thus, the persistence of the normally hyperbolic
manifolds $\tG^\pm$ for $\varepsilon>0$ comes from the theory of normally
hyperbolic manifolds \cite{HirPug70,Fen72,Fen74,HirPugShu77,DelLlaSea08}. This
guarantees the existence of normally hyperbolic invariant manifolds
$\tG_\varepsilon^\pm$ and $\mathcal {C} ^r$ diffeomorphisms (with $r$ as big as we want)
\begin{align*}
G_\varepsilon^\pm&:\RR\times \T\longrightarrow \RR^3\times\T,\\
g_\varepsilon^\pm&:\RR\times \T\longrightarrow \RR^2
\end{align*}
such that the points at the manifolds $\tG_\varepsilon^+$ are parameterized by
$\tw^\pm=G_\varepsilon^\pm(v,s)\in\tG_\varepsilon^\pm.$ These parameterizations
are not unique. We can make them unique by taking $G_\varepsilon^\pm$ to
be the identity in the $v$ and $s$ coordinates, 
\begin{equation}
\Pi_{v,s}\left(G_\varepsilon^\pm\right)=Id,
\label{eq:parameterization_identity}
\end{equation}
that is,
\begin{equation*}
G_\varepsilon^\pm(v,s)=(v,g_\varepsilon^\pm(v,s),s).
\end{equation*}
When $\varepsilon=0$, these maps coincide with the parametrization defined
in~\eqref{eq:normally_hype_man_impact_map}. Therefore $g_0^\pm(v,s)=Q^\pm$ and
$\tG_\varepsilon^+$ is $\varepsilon$-close to $\tG^+$. In particular,
$\tG_\varepsilon^\pm\subset \tO_\varepsilon$.

The manifolds $\tG_{\varepsilon}^\pm$ have $\mathcal {C} ^r$ local stable and
unstable manifolds
\begin{equation*}
W^{s,u}( \tG_\varepsilon^\pm )
\end{equation*}
$\varepsilon$-close to $W^{s,u}( \tG^\pm )
$, satisfying:
\begin{itemize}
\item
For every $\tw^{s}\in W^{s}(\tG_\varepsilon^+)$
there exists $\tw^+\in \tG_\varepsilon^+$ such that:
\begin{equation}
\vert P^n_\varepsilon(\tw^{s})-P^n_\varepsilon(\tw ^+)\vert <
K^+\left(\blambda  ^+ +O(\varepsilon)\right)^n\to 0,
\,n\to+\infty,
\label{eq:perturbed_hyperbolicity_map2}
\end{equation}
\item
For every $\tw^{u}\in W^{u}(\tG_\varepsilon^+)$
there exists $\tw^+\in \tG_\varepsilon^+$ such that:
\begin{equation}
\vert P^n_\varepsilon(\tw^{u})-P^n_\varepsilon(\tw ^+)\vert <
K^+\left(\blambda  ^+ +O(\varepsilon)\right)^{-n}\to 0,
\,n\to-\infty,
\label{eq:perturbed_hyperbolicity_map2u}
\end{equation}
\end{itemize}
and $0<\blambda^+<1$ is  the constant given
in~(\ref{eq:normally_hyperbolicity_map}).
Analogous properties hold for the manifold $\tG_\varepsilon^-$.

\begin{remark}
In general, the theorem of persistence of normally hyperbolic manifolds only
gives local invariance for the perturbed manifold. Nevertheless,
following~\cite{KunKupYou97}, one can use the change of variables given
in~\cite{Lev91} to obtain the impact map in symplectic coordinates.
Therefore, one can apply the twist theorem to the perturbed impact map.  This
gives that those curves $\tC_c\subset\tG_0^\pm$ with
 $\alpha(\sqrt{2c})/T$ far away from rational numbers persist as
invariant curves.
These provide invariant boundaries for the perturbed manifolds
$\tG_\varepsilon^\pm$, and hence these are compact invariant manifolds.
\label{rem:compact_tG}
\end{remark}

We now consider the existence of manifolds equivalent to $\tG^\pm_\varepsilon$
for the flow $\tp$. More precisely, we are interested in obtaining the perturbed
version of the  manifolds $\tL^\pm$ in terms of the reference manifold $N$ given
in~(\ref{eq:reference_manifold_flow}).\\

\begin{prop}\label{prop:persistence_Lambda}
Let $\tL^\pm$ and $W^{s,u}(\tL^\pm)$ be the manifolds described in
\S\ref{sec:invariant_objects_unperturbed_flow} invariant for the unperturbed
system~(\ref{eq:general_field_scattering}). Then, there exist continuous maps
\begin{equation*}
F_\varepsilon^\pm:N\longrightarrow \RR^4\times\T,
\end{equation*}
where $F_0^\pm$ are given in \eqref{eq:unperturbed_reference_map},
that are Lipschitz in $\varepsilon$, such that the $C^0$ manifolds
\begin{equation}
\tL_\varepsilon^\pm=F_\varepsilon^\pm(N)
\label{eq:perturbed_Lambdas}
\end{equation}
are invariant under $\tp$ and $\varepsilon$-close to $\tL^\pm$.
Moreover, there exist $C^0$ manifolds $W^{s,u}(\tL_\varepsilon^{\pm})$,
$\varepsilon$-close to $W^{s,u}(\tL^{+})$, satisfying:
\begin{itemize}
\item
for any
$\tz^{s}=(z^{s},s^{s})\in
W^{s}(\tL_\varepsilon^{+})$ there exists
$\tz^{+}=(z^{+},s^{+})\in\tL_\varepsilon^{+}$ 
such that
\begin{equation}
\vert \tp(t;\tz^{s};\varepsilon)-\tp(t;\tz^{+};\varepsilon)\vert
< \bar{K}^{+}e^{-(\lambda^{+}+O(\varepsilon))\vert t\vert},\,t\to+\infty,
\label{eq:perturbed_hyperbolicity_flow}
\end{equation}
\item
for any
$\tz^{u}=(z^{u},s^{u})\in
W^{u}(\tL_\varepsilon^{+})$ there exists
$\tz^{+}=(z^{+},s^{+})\in\tL_\varepsilon^{+}$ 
such that
\begin{equation}
\vert \tp(t;\tz^{u};\varepsilon)-\tp(t;\tz^{+};\varepsilon)\vert
< \bar{K}^{+}e^{-(\lambda^{+}+O(\varepsilon))\vert t\vert},\,t\to-\infty,
\label{eq:perturbed_hyperbolicity_flowu}
\end{equation}
\item
$s^{s}=s^{u}=s^{+}$ 
\end{itemize} 
where $\bar{K}^{+}>0$, and $\lambda^{+}>0$ is given
in~(\ref{eq:hyperbolicity_flow}). 
Analogous properties hold for the manifold $\tL_\varepsilon^-$.
\end{prop}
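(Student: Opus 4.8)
The plan is to obtain the flow-level objects by \emph{flowing out} the map-level objects already in hand from \S\ref{sec:invariant_objects_perturbed}: the normally hyperbolic cylinders $\tG_\varepsilon^\pm$, their local stable and unstable manifolds $W^{s,u}(\tG_\varepsilon^\pm)$, and the discrete estimates \eqref{eq:perturbed_hyperbolicity_map2}--\eqref{eq:perturbed_hyperbolicity_map2u}. For $\tw\in\tG_\varepsilon^\pm$ let $\tau_\varepsilon^\pm(\tw)>0$ be the first time at which the trajectory $\tp(\cdot;(0,\tw);\varepsilon)$ returns to $\tS^+$; since $v$ ranges over the compact interval $[v_1,v_2]$, bounded away from the invisible fold at $v=0$, the two crossings of $u=0$ it makes in the meantime are transversal with speed bounded below, so $\tau_\varepsilon^\pm$ is $C^r$, Lipschitz in $\varepsilon$, and equals $\alpha(v)+O(\varepsilon)$ uniformly. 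I then set
\[
\tL_\varepsilon^\pm:=\big\{\,\tp\big(t;(0,\tw);\varepsilon\big)\ :\ \tw\in\tG_\varepsilon^\pm,\ 0\le t\le\tau_\varepsilon^\pm(\tw)\,\big\}.
\]
This set is invariant under $\tp$, because $\tG_\varepsilon^\pm$ is genuinely $P_\varepsilon$-invariant (Remark~\ref{rem:compact_tG}) and $P_\varepsilon$ is obtained from $\tp$ by first return to $\tS$. Near $\tG_\varepsilon^\pm$ the $x$-coordinate stays $O(\varepsilon)$-close to the nonzero value $\Pi_x Q^\pm$, so over one return interval the trajectory meets only the switching manifold $u=0$, never $x=0$; hence $\tL_\varepsilon^\pm$ is $C^0$ and piecewise $C^r$, losing smoothness only along the codimension-one set $\tL_\varepsilon^\pm\cap\tS$, and, by continuous dependence of $\tp$, $\tG_\varepsilon^\pm$ and $\tau_\varepsilon^\pm$ on $\varepsilon$, it is $\varepsilon$-close to $\tL^\pm$.

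For the parameterization I would show that, like $\tL^\pm$, the manifold $\tL_\varepsilon^\pm$ is a graph in the $(x,y)$-directions over the annulus $\mathcal A:=\{(u,v)\,:\,c_1\le U(u,v)\le c_2\}$ times $\T$. Indeed the unperturbed $\tL^\pm$ meets each fibre $\{u=u_0,\,v=v_0,\,s=s_0\}$, with $(u_0,v_0)\in\mathcal A$, in the single point $(u_0,v_0,Q^\pm,s_0)$ and does so transversally; this persists for small $\varepsilon$ (away from $u=0$ the manifolds are $C^r$, and the crease at $u=0$ is handled by \eqref{eq:parameterization_identity}), defining a continuous map $\mathfrak g_\varepsilon^\pm$, Lipschitz in $\varepsilon$, with $\mathfrak g_0^\pm\equiv Q^\pm$ and $\tL_\varepsilon^\pm=\{(u,v,\mathfrak g_\varepsilon^\pm(u,v,s),s)\}$. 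Parameterizing $\mathcal A$ by $(\theta,v)\mapsto\phi_\U(\theta\alpha(v);0,v)$ exactly as $F_0^\pm$ does, I set
\[
F_\varepsilon^\pm(\theta,v,s):=\Big(\phi_\U(\theta\alpha(v);0,v),\ \mathfrak g_\varepsilon^\pm\big(\phi_\U(\theta\alpha(v);0,v),s\big),\ s\Big),
\]
which is continuous, $\Tu$-periodic in $\theta$ (inheriting the periodicity of the unperturbed planar orbits), Lipschitz in $\varepsilon$, surjective onto $\tL_\varepsilon^\pm$, and equal to \eqref{eq:unperturbed_reference_map} at $\varepsilon=0$.

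The manifolds $W^{s,u}(\tL_\varepsilon^\pm)$ are defined in the same spirit, as the $\tp$-saturation of $W^{s,u}(\tG_\varepsilon^\pm)$ --- flowing backwards for the stable manifold and forwards for the unstable one, and extending by concatenation through the single crossing of $x=0$ that the heteroclinic $\su(\xi)$ undergoes --- so that they are $C^0$, $\varepsilon$-close to $W^{s,u}(\tL^\pm)$, and $\tp$-invariant. For \eqref{eq:perturbed_hyperbolicity_flow}, given $\tz^s\in W^s(\tL_\varepsilon^+)$ I flow it back to its first hit $\tw^s\in W^s(\tG_\varepsilon^+)\subset\tS^+$, take the strong-stable base point $\tw^+\in\tG_\varepsilon^+$ given by \eqref{eq:perturbed_hyperbolicity_map2}, and let $\tz^+$ be the suitable point on the flow orbit of $\tw^+$, shifted in time so that the (exponentially converging) sequences of impact times of the two orbits synchronize. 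Then the discrete bound \eqref{eq:perturbed_hyperbolicity_map2}, together with the fact that between consecutive impacts the flow distorts distances by a uniformly bounded factor (Gr\"onwall on the compact region $S^\pm\times S^\pm\times\T$ where the branch flows are smooth), yields \eqref{eq:perturbed_hyperbolicity_flow}; and $s^+=s^s$ then comes for free, since the $s$-components of $\tp(t;\tz^s;\varepsilon)$ and $\tp(t;\tz^+;\varepsilon)$ differ by a $t$-independent constant, which must vanish if their distance tends to $0$. The unstable case and the manifold $\tL_\varepsilon^-$ are treated symmetrically, with \eqref{eq:perturbed_hyperbolicity_map2u} and \eqref{eq:hyperbolicity_flow}.

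The step I expect to be the main obstacle is recovering the \emph{sharp} rate $\lambda^\pm+O(\varepsilon)$ in \eqref{eq:perturbed_hyperbolicity_flow}--\eqref{eq:perturbed_hyperbolicity_flowu} rather than a degraded one: the naive transfer of the uniform map bound \eqref{eq:normally_hyperbolicity_map} only gives a rate of order $\lambda^\pm\alpha(v_1)/\alpha(v_2)+O(\varepsilon)$, strictly below $\lambda^\pm$. One must instead use the energy-level-dependent hyperbolicity constants $e^{-\lambda^\pm\alpha(v)}$ of \eqref{eq:hyperbolicity_map} --- for which the per-impact contraction exponent $\lambda^\pm\alpha(v)$ is exactly matched by the inter-impact time $\alpha(v)+O(\varepsilon)$, so that the per-unit-time rate is $\lambda^\pm+O(\varepsilon)$ --- and control the $O(\varepsilon)$-per-iterate drift of the $U$-energy of the tracking orbit along $\tG_\varepsilon^\pm$ over the relevant time window (equivalently, one may take the tracking orbit on a KAM curve from Remark~\ref{rem:compact_tG}, on which the energy is exactly conserved and the match is immediate). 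Everything else --- well-posedness and $C^0$/Lipschitz regularity of the flow-saturations and of $\mathfrak g_\varepsilon^\pm$, and the various $\varepsilon$-closeness estimates --- is routine once the domains are kept inside the region where $\tp$ is piecewise smooth with transversal $u=0$ crossings.
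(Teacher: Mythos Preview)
Your proposal is essentially correct and follows the same overall strategy as the paper: obtain the flow-level manifolds $\tL_\varepsilon^\pm$ and $W^{s,u}(\tL_\varepsilon^\pm)$ by flowing out the section-level objects $\tG_\varepsilon^\pm$ and $W^{s,u}(\tG_\varepsilon^\pm)$, and deduce the continuous-time exponential bounds from the discrete ones via Gr\"onwall on the smooth pieces between impacts, after synchronizing the $s$-coordinates of $\tz^s$ and $\tz^+$.

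There is one genuine difference worth noting. For the parameterization $F_\varepsilon^\pm$, the paper flows the section point $(0,G_\varepsilon^\pm(v,s))$ by the \emph{perturbed} flow for a time equal to $\theta$ times the (perturbed) first-return time, i.e.\ $F_\varepsilon^\pm(\theta,v,s)=\tp\big((s_1^\pm-s)\theta;(0,G_\varepsilon^\pm(v,s));\varepsilon\big)$. You instead argue that $\tL_\varepsilon^\pm$ is a graph in the $(x,y)$-direction over the unperturbed annulus $\mathcal A\times\T$ and set $F_\varepsilon^\pm(\theta,v,s)=\big(\phi_\U(\theta\alpha(v);0,v),\mathfrak g_\varepsilon^\pm(\cdot),s\big)$. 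Both are valid; the paper's choice makes invariance immediate (it is a flow box) at the cost of a slightly less explicit formula, while yours fixes the $(u,v,s)$-coordinates once and for all and makes $\varepsilon$-closeness and the limit $F_0^\pm$ transparent, at the cost of a short transversality argument to justify the graph property across the crease at $u=0$.

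Your flagged obstacle is real and well observed: converting the uniform map rate $\blambda^\pm=e^{-\lambda^\pm\alpha(v_1)}$ into the flow rate $\lambda^\pm+O(\varepsilon)$ requires using the \emph{pointwise} constant $\hat\lambda^\pm=e^{-\lambda^\pm\alpha(v)}$ from~\eqref{eq:hyperbolicity_map}, so that the per-return contraction exponent matches the inter-return time $\alpha(v)+O(\varepsilon)$. The paper's proof invokes this implicitly (it cites~\eqref{eq:blambda} but ultimately claims the undegraded rate $\lambda^+$), so your remark in fact sharpens the paper's argument rather than departing from it.
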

\begin{proof}
The maps $F_\varepsilon^\pm$ are obtained by flowing the manifolds
$\tG_\varepsilon^\pm$. Let $(\theta,v,s)\in N$ and consider
\begin{align*}
\tw^\pm&=G^\pm_\varepsilon(v,s)=(v,g_\varepsilon^\pm(v,s),s)\in\tG_\varepsilon^\pm\\
\tw^\pm_1&=(\omega_1^\pm,s_1^\pm)=P_\varepsilon(\tw^\pm),
\end{align*}
and
\begin{equation*}
\tz^\pm=(0,\tw^\pm).
\end{equation*}
Then we define
\begin{align}
F_\varepsilon^\pm(\theta,v,s)&=
\tp\left(\left(s_1^\pm-s\right)\theta;(0,\tw^\pm);\varepsilon\right)\nonumber\\
&=\tp\left( \left( s_1^\pm-s
\right)\theta;0,v,g_\varepsilon^\pm(v,s),s;\varepsilon \right),\,\theta\in[0,1],
\label{eq:perturbed_reference_map}
\end{align}
which are smooth maps as long as the flow does not hit $u=0$, which occurs at
\begin{equation}
\theta= 0,\,\theta=
\frac{\Pi_s(P_\varepsilon^+(\tw^\pm))-s}{s_1^\pm-s},\,\theta=
1.
\label{eq:discontinuity_points}
\end{equation}
These provide the $3$-dimensional continuous manifolds given
in~(\ref{eq:perturbed_Lambdas}) which are invariant by the flow $\tp$. This can
be seen using the fact that
\begin{equation}
F_\varepsilon^\pm(1,v,s)=\tp(s_1^\pm-s;\tz^\pm;\varepsilon)=\left( 0,P_\varepsilon(\tw^\pm)
\right)\in\left\{ 0
\right\}\times\tG_\varepsilon^\pm,
\label{eq:Fproperty}
\end{equation}
(see~\cite{Gra12} \S 6.3.6 for details).
Note that, when $\varepsilon=0$, $\tz^\pm=(0,\tw^\pm)=(0,G_0^\pm(v,s))=\left(0, v,Q^\pm,s
\right)\in\tG_0^\pm$ and $(s_1^\pm-s)=\alpha(v)$.
Therefore,
\begin{equation*}
F^\pm_0(\theta,v,s)=\tp\left( \alpha(v)\theta;\tz^\pm;0\right)=
\left(\phi_\U(\theta\alpha(v);0,v),Q^\pm,s  \right)
\end{equation*}
which coincide with the parameterizations  $F_0^\pm$ defined
in~(\ref{eq:unperturbed_reference_map}).

Let us now consider the stable and unstable manifolds
$W^{s,u}(\tL_\varepsilon^\pm)$. We illustrate the method for
$W^s(\tL_\varepsilon^+)$.  Let $(\theta,v,s)\in N$ and consider
\begin{equation*}
\tw^s=(\omega^s,s^s)\in W^s(\tw^+)
\end{equation*}
with
\begin{equation*}
\tw^+=G_\varepsilon^+(v,s)=(v,g^+_\varepsilon(v,s),s)\in\tG_\varepsilon^+,
\end{equation*}
which satisfy (\ref{eq:perturbed_hyperbolicity_map2}).  Defining
\begin{equation*}
\tau^+=\left( \Pi_s\left( P_\varepsilon(\tw^+) \right)-s\right)\theta
\end{equation*}
we consider the point
\begin{equation*}
\tz^+=\tp(\tau^+;(0,\tw^+);\varepsilon)=F_\varepsilon^+(\theta,v,s)\in\tL_\varepsilon^+.
\end{equation*}
We now define
\begin{equation}
\tau'=s-s^s
\label{eq:delay_stable_manifold}
\end{equation}
and the point
\begin{equation}
\tz^s=\tp(\tau^++\tau';(0,\tw^s);\varepsilon).
\label{eq:point_stable_manifold}
\end{equation}
We now show that it belongs to the stable fibre of the point $\tz^+$.  Let
$\tw^+_i$ and $\tw^s_i$ be the impact sequences associated with the points
$\tw^+$ and $\tw^s$, respectively.  Using the (smooth) intermediate map
$P^+_\varepsilon$ defined in~(\ref{eq:impact_mapp}) we have that
\begin{align*}
\tw^+_{2i+1}&=P_\varepsilon^+(\tw^+_{2i}),\quad\tw_{2i}^+=P^i_\varepsilon(\tw^+)\\
\tw^s_{2i+1}&=P_\varepsilon^+(\tw^s_{2i}),\quad\tw^s_{2i}=P^i_\varepsilon(\tw^s),
\end{align*}
and hence, by~(\ref{eq:perturbed_hyperbolicity_map2}),
\begin{equation*}
\left| \tw^s_i-\tw^+_i\right|<K^+ (\blambda^+)^i,\, i\to \infty.
\end{equation*}
The constant $K^+$ may differ from the one used
in~(\ref{eq:perturbed_hyperbolicity_map2}). To simplify the notation we take the
maximum of both and use the same symbol.  Consequently, the sequences $s^s_i$ and
$s^+_i$ satisfy
\begin{equation*}
\vert s_i^s-s_i^+\vert < K^+(\blambda^+)^i,\,i\to\infty,
\end{equation*}
for $K^+>0$ again properly redefined. In other words, there exist two sequences
of times, $t_i^{s}=s_i^s-s^s$ and $t_i^+=s_i^+-s$, where the impacts occur, such
that
\begin{equation}
\left\vert
\tp(t_i^s;(0,\tw^s);\varepsilon)-\tp(t_i^+;(0,\tw^+);\varepsilon)\right\vert
< K^{+}(\blambda^+)^i,\,i\to\infty
\label{eq:hyperbolicity_aux}
\end{equation}
and
\begin{equation*}
\vert t_i^s-t_i^+-(s^s-s)\vert < K^+(\hlambda^+)^i,\,i\to\infty.
\end{equation*}
The fact that the perturbed manifold $\tG_\varepsilon^+$ is compact (see
remark~\ref{rem:compact_tG}) ensures that the sequences $t_{i+1}^s-t_{i}^s$ and
$t_{i+1}^+-t_{i}^+$ are bounded, both from above and below,
\begin{equation}
\begin{aligned}
\alpha^{\pm}(v)+O(\varepsilon)<t^s_{i+1}-t^s_i<\alpha^\pm(v)+O(\varepsilon)\\
\alpha^\pm(v)+O(\varepsilon)<t^+_{i+1}-t^+_i<\alpha^\pm(v)+O(\varepsilon).
\end{aligned}
\label{eq:bounded_diff}
\end{equation}
Hence, using the lower bound, if $t$ is large enough, we can always find $i$
such that
\begin{equation}
b_i:=\min(t_i^+-\tau^+,t_i^s-\tau^+-\tau')<t<\max(t_{i+1}^+-\tau^+,t_{i+1}^s-\tau^+-\tau'):=a_{i+1}
\label{eq:t_assumption}
\end{equation}
and write the flows
\begin{align}
\tp(t;\tz^+;\varepsilon)&=\tp(t-t_i^++\tau^+;(0,\tw^+_i);\varepsilon)\label{eq:phi_zp}\\
\tp(t;\tz^s;\varepsilon)&=\tp(t-t_i^s+\tau^++\tau';(0,\tw^s_i);\varepsilon)\label{eq:phi_zs}.
\end{align}
Let us now assume that
\begin{equation*}
a_i<t<b_{i+1}.
\end{equation*}
For $t\in(a_i,b_{i+1})$, both flows~(\ref{eq:phi_zp}) and~(\ref{eq:phi_zs}) are
located in the same domain $S^\pm\times S^+\times\T$ and hence, the the function
\begin{equation*}
\mathfrak{u}(t)=\vert \tp(t;\tz^+;\varepsilon)-\tp(t;\tz^s;\varepsilon)\vert
\end{equation*}
is a smooth function in all its variables because so are the flows $\tp^{\pm+}$.
Note that no impacts occur in the interval $(a_i,b_{i+1})$.  

Let $\mathfrak{K}>0$ be the largest Lipschitz constant of the  two vector fields; then,
for $t\in (a_i,b_{i+1})$ we have
\begin{equation*}
\mathfrak{u}(t)\le K^+(\blambda^+)^i+\int_{a_i}^t\mathfrak{K} u(t)dt.
\end{equation*}
Applying Gronwall's Lemma, we obtain
\begin{equation*}
\mathfrak{u}(t)\le K^+(\blambda^+)^ie^{\mathfrak{K}(b_{i+1}-a_i)}.
\end{equation*}
Using~(\ref{eq:bounded_diff}), the difference $b_{i+1}-a_i$ is bounded by
$\max(\alpha^+(v),\alpha^-(-v))+O(\varepsilon)$, and hence
\begin{equation*}
a_i=i\left(\max(\alpha^+(v),\alpha^-(v))+O(\varepsilon)\right).
\end{equation*}
Then, recalling the definition of $\blambda^+$ given in~(\ref{eq:blambda}) and
assumption~(\ref{eq:t_assumption}), there exists a positive constant $K^+$
(suitably redefined) such that
\begin{equation*}
\vert \phi(t;\tz^{s};\varepsilon)-\phi(t;\tz^{+};\varepsilon)\vert
< K^{+}e^{-\lambda^{+}\vert t\vert},\,t\to\infty,
\end{equation*}
which is what we wanted to prove. If $t\in(b_i,a_i)$ (equivalently for
$t\in(b_{i+1},a_{i+1})$), the flows are not located at the same domains
$S^\pm\times S^\pm\times\T$ and we can not use the Lipschitz constants of the
fields to bound $\mathfrak{u}(t)$. However, as the difference between the fields
defined in $S^\pm\times S^\pm\times\T$ is bounded, one can find some constant
$K_1$ such that $\mathfrak{u}(t)<(a_i-b_i)K_1$.  Since
\begin{equation*}
\left( t_i^+-\tau^+ \right)-\left( t_i^s-\tau^+-\tau'
\right)<K^+\left( \hlambda^+ \right)^i\longrightarrow 0,
\end{equation*}
we also see that $\mathfrak{u}(t)\to 0$ exponentially.

In order to see that $W^{s,u}(\tL_\varepsilon^\pm)$ are $\varepsilon$-close to
$W^{s,u}(\tL^\pm)$ we recall that so are the manifolds
$\tG_{\varepsilon}^\pm$ and $\tG^\pm$ and $W^{s,u}(\tG_\varepsilon^\pm)$ and 
$W^{s,u}(\tG^\pm)$. This implies that $\tau'$ given
in~(\ref{eq:delay_stable_manifold}) is of order $\varepsilon$. Now, since, for
$\varepsilon=0$,
\begin{align*}
\tw^s&=(v,\su(\xi),s)\\
\tw^+&=(v,Q^+,s)\\
\tz^s&=(\phi_\U(\tau^+;0,v),\su(\xi+\tau^+),s+\tau^+)\\
\tz^+&=(\phi_\U(\tau^+;0,v),Q^+,s+\tau^+)
\end{align*}
and the flow $\tp(t;\tz;\varepsilon)$ is $\varepsilon$-close to $\tp(t;\tz;0)$,
we find that, for $\varepsilon>0$,
\begin{equation}\label{eq:zsz+0}
\begin{array}{lcr}
\tz^s&=&(\phi_\U(\tau^+;0,v),\su(\xi+\tau^+),s+\tau^+)+O(\varepsilon)\\
\tz^+&=&(\phi_\U(\tau^+;0,v),Q^+,s+\tau^+)+O(\varepsilon).
\end{array}
\end{equation}
Hence, the manifolds $W^{u,s}(\tL_\varepsilon^\pm)$ are $\varepsilon$-close to
the unperturbed manifolds $W^{u,s}(\tL^\pm)$.
\end{proof}

\begin{remark}\label{rem:sigma_not_isochrone}
The section $\tS$ does not necessary have to
be an isochrone. By introducing in~(\ref{eq:point_stable_manifold}) the delay
$\tau'$ given in~(\ref{eq:delay_stable_manifold}) we obtain points $\tz^s$ that
belong to the isochrone of $\tz^+$.
\end{remark}

\begin{remark}
The direct impact
sequences of points $\tw^+$ and $\tw^s$ are
infinite. Since
$\tw^+\in\tG_\varepsilon^+$ and hence, as $\tG_\varepsilon^+$ is
invariant and $\varepsilon$-close to $\tG_0^+$, it is contained in
$\Sigma^+\times S^+\times\T$, so the flow $\tp(t;(0,\tw^+);\varepsilon)$ never crosses
the switching manifold associated with $x=0$.\\
Similarly, if $\tw^s$ is chosen to be in $\Sigma^+\times S^+\times\T$,
then the flow $\tp(t;(0,\tw^s);\varepsilon)$ approaches $\tL_\varepsilon^+$ for
$t>0$, and thus never crosses the switching manifold $x=0$.  It may happen
 that $W^s(\tL_\varepsilon^+)$ crosses $x=0$ more than once backwards in
time. In this case, $\tw^s$ has to be chosen in the piece of
$W^s(\tG_\varepsilon^+)$ ``closest to'' $\tG_\varepsilon^+$.
\end{remark}

Finally, properties~\eqref{eq:perturbed_hyperbolicity_flow} and \eqref{eq:perturbed_hyperbolicity_flowu} allow us to refer
to $W^{s,u}(\tL_\varepsilon^\pm)$ as stable and unstable manifolds of the
invariant manifolds $\tL_\varepsilon^\pm$.

\section{Scattering map}\label{sec:scattering_map}
The \emph{scattering map}~\cite{DelLlaSea00}, also called the \emph{outer map},
is an essential tool in the study of Arnol'd diffusion.  The diffusion mechanism
in our system when $\varepsilon>0$ consists of trajectories that follow
heteroclinic chains between the manifolds $\tL_\varepsilon^\pm$ such that energy
may increase at each heteroclinic link. The main novelty in the mechanism we
present here, is that we have a scattering map between two different normally
hyperbolic $C^0$ manifolds.  Therefore, the scattering map consists on
identifying points at the invariant manifolds $\tL_\varepsilon^\pm$ via
heteroclinic connections as follows. Let $\tz^\pm \in\tL^\pm_\varepsilon$ and
assume that there exists $\tz^*\in W^u(\tL^-_\varepsilon)\cap
W^s(\tL^+_\varepsilon)$ such that
\begin{equation*}
\lim_{t\to \pm}\left| \tp(t;\tz^\pm;\varepsilon)-\tp(t;\tz^*;\varepsilon)
\right|\to 0.
\end{equation*}
Then, the scattering map becomes
\begin{equation*}
\begin{array}{cccc}
\Su_\varepsilon:&\tL_\varepsilon^-&\longrightarrow &\tL_\varepsilon^+\\
&\tz^-
&\longmapsto&\tz^+
\end{array}.
\end{equation*}
The heteroclinic manifold $W^u(\tL^-_\varepsilon)\cap W^s(\tL^+_\varepsilon)$ is
generated by the upper heteroclinic connection
$\su$~\eqref{eq:unpert_het_orbit_parameterization} of the unperturbed system.
Sufficient conditions for its existence will be studied
in~\S\ref{sec:transverse_intersection} by adapting the Melnikov procedure
described in~\cite{DelLlaSea06} to the piecewise-smooth nature of our problem.

Similarly, one can obtain sufficient conditions for the existence of the
heteroclinic manifold $W^s(\tL^-_\varepsilon)\cap W^u(\tL^+_\varepsilon)$, which
is born from the lower heteroclinic connection of the unperturbed system, $\sd$ by 
considering the scattering map
\begin{equation*}
\Sd_\varepsilon:\,\tL^+_\varepsilon\longrightarrow
\tL^-_\varepsilon.
\end{equation*}
Note that for $\varepsilon=0$ the composition of these maps becomes the identity.

A first order study of these maps (\S\ref{sec:first_order_scattering}) allows us
to identify those trajectories which will land on higher energy levels of the
target manifold. Hence we can construct proper heteroclinic chains with
increasing energy levels. As in~\cite{DelLlaSea06}, the concatenation of these
chains is done by the so-called \emph{inner map}, which is obtained by studying
the dynamics inside the manifolds.  We believe that the combination of the
dynamics of the two scattering maps $\Su_\varepsilon$,  $\Sd_\varepsilon$ and
the inner dynamics in $\tL_\varepsilon ^+$,  $\tL_\varepsilon ^-$ gives
more possibilities for diffusion than the mechanism in~\cite{DelLlaSea06}.
However, in this paper we restrict ourselves to the study of the scattering
maps, and we leave the study of the  dynamics inside the manifolds  for future
work.

\subsection{Transverse intersection of the stable and unstable manifolds}\label{sec:transverse_intersection}
We now study sufficient conditions for the intersection of the stable and
unstable manifolds of $\tL_\varepsilon^+$ and $\tL_\varepsilon^-$ when
$\varepsilon>0$.  The following result, equivalent to Proposition~$9.1$
in~\cite{DelLlaSea06}, provides sufficient conditions for both manifolds to
intersect ``transversally" in a 3-dimensional
heteroclinic manifold which can be parameterized by the coordinates
$(\theta,v,s)\in\T\times[v_1,v_2]\times\Tu$.  Recalling that these
manifolds are only piecewise-smooth continuous and Lipschitz at the  points given
in~\eqref{eq:discontinuity_points}, the tangent space is not defined everywhere and
hence, the notion of transversality has to be adapted. In fact, what is
important for us is that the intersection is locally unique. The continuity of the
system gives us that, in those points where differentiability is lost,
the intersection is the unique lateral limit of unique intersections.
This will provide robustness of intersections under small perturbations.
\begin{prop}\label{prop:heteroclinic_intersection}
Let~(\ref{eq:unperturbed_unstable_manifold_po}) be a parameterization of the
unperturbed heteroclinic manifold $W^u(\tL^-)=W^s(\tL^+)$, and assume that
there exists an open set $J\subset N$ such that, for all $(\theta_0,v_0,s_0)\in
J$, the function
\begin{equation}
\zeta\longmapsto M(\zeta,\theta_0,v_0,s_0),
\label{eq:melnikov_function}
\end{equation}
with
\begin{equation}
M(\zeta,\theta_0,v_0,s_0):=\int_{-\infty}^\infty\left\{ X,h \right\}
\left(\phi_{\U}\left(\theta_0\alpha(v_0)+\zeta+t;0,v_0\right),\su(t),s_0+\zeta+t \right)dt,
\label{eq:Melnikov_def}
\end{equation}
has a simple zero at $\zeta=\bt(\theta_0,v_0,s_0)$. Then, the manifolds
$W^s(\tL_\varepsilon^+)$ and $W^u(\tL_\varepsilon^-)$ intersect transversally.
Moreover, there exist open sets $\mathcal {J}^-\subset \tL_\varepsilon^-$ and $\mathcal {J}^+\subset \tL_\varepsilon^+$
such that  for any point $\tz^-_\varepsilon \in \mathcal {J}^-$,   and
$\tz^+_\varepsilon\in\mathcal {J}^+$ there exists a locally unique point $\tz^*(\theta_0,v_0,s_0;\varepsilon)\in
W^s(\tL_\varepsilon^+)\cap W^u(\tL_\varepsilon^-)$ such that
\begin{equation*}
\lim_{t\to\pm\infty}\tp(t;\tz^*;\varepsilon)-\tp(t;\tz^\pm;\varepsilon)=0.
\end{equation*}
\end{prop}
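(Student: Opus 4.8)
The plan is to follow the standard Melnikov/transversality scheme for normally hyperbolic invariant manifolds, adapted to the piecewise-smooth setting via the impact map, exactly as in Proposition~9.1 of~\cite{DelLlaSea06}. First I would set up the measurement of the distance between $W^u(\tL_\varepsilon^-)$ and $W^s(\tL_\varepsilon^+)$ along a section transverse to the unperturbed heteroclinic manifold $\tg=W^u(\tL^-)=W^s(\tL^+)$. Concretely, fix $(\theta_0,v_0,s_0)\in N$ and use the parameterization~\eqref{eq:unperturbed_unstable_manifold_po} to pick the unperturbed heteroclinic point $\tz_0(\zeta)=(\phi_\U(\theta_0\alpha(v_0)+\zeta;0,v_0),\su(\zeta),s_0+\zeta)$; the extra parameter $\zeta$ slides the base point along the heteroclinic orbit. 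By Proposition~\ref{prop:persistence_Lambda} both perturbed manifolds $W^{u,s}(\tL_\varepsilon^\pm)$ are $C^0$, $\varepsilon$-close to the unperturbed ones, and smooth away from the discontinuity set~\eqref{eq:discontinuity_points}; so for $(\theta_0,v_0,s_0)$ avoiding that set one can measure, in the $x$-direction transverse to $\gamma^{\text{up}}$ (this is the direction in which $X$ varies, so $\nabla X$ is the natural covector), a scalar gap
\begin{equation*}
d(\zeta,\theta_0,v_0,s_0;\varepsilon)=\varepsilon\, M(\zeta,\theta_0,v_0,s_0)+O(\varepsilon^2).
\end{equation*}

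The second step is to identify the first-order term with the integral~\eqref{eq:Melnikov_def}. Writing the perturbed stable and unstable manifolds as graphs over the unperturbed heteroclinic orbit and differentiating the energy $X$ along the perturbed flow, one gets $\frac{d}{dt}X(\phi(t))=\varepsilon\{X,h\}(\phi(t))+O(\varepsilon^2)$ along $W^s(\tL_\varepsilon^+)$, and similarly along $W^u(\tL_\varepsilon^-)$; integrating from the base point to $\pm\infty$ and subtracting, the boundary terms vanish because on $\tL_\varepsilon^\pm$ the energy $X$ equals $\bar d+O(\varepsilon)$ uniformly (the manifolds are $\varepsilon$-close to $\tL^\pm\subset\{X=\bar d\}$, using~\eqref{eq:perturbed_hyperbolicity_flow}--\eqref{eq:perturbed_hyperbolicity_flowu} and the exponential convergence to the tori to justify the improper integral). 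The resulting convergent integral is precisely $M(\zeta,\theta_0,v_0,s_0)$ as defined in~\eqref{eq:Melnikov_def}, with the unperturbed flows $\phi_\U(\cdot;0,v_0)$ and $\su(\cdot)$ in the integrand. The only care needed here, compared with the smooth case, is that $\su$ and $\phi_\U$ are piecewise-smooth: the integrand $\{X,h\}$ is evaluated along a $C^0$ piecewise-smooth curve, so the integral is still well-defined as an absolutely convergent improper Riemann integral, and the hyperbolicity estimate~\eqref{eq:hyperbolicity_flow} gives the exponential decay needed for convergence. This is essentially the computation already carried out for a single piecewise-smooth system in~\cite{GraHogSea12}, now applied fibrewise over the normally hyperbolic manifolds.

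The third step is the transversality conclusion. A simple zero $\zeta=\bt(\theta_0,v_0,s_0)$ of $\zeta\mapsto M(\zeta,\theta_0,v_0,s_0)$ means $M=0$ and $\partial_\zeta M\ne0$ there; by the implicit function theorem applied to $d(\zeta,\theta_0,v_0,s_0;\varepsilon)=0$ (valid on the open set where the manifolds are smooth), for $\varepsilon$ small there is a locally unique $\zeta=\bt(\theta_0,v_0,s_0)+O(\varepsilon)$ at which the manifolds meet, and the nonvanishing $\zeta$-derivative forces the intersection to be transverse within the relevant 3-dimensional slices, i.e.\ $\dim(W^s(\tL_\varepsilon^+)\cap W^u(\tL_\varepsilon^-))=3$, parameterized by $(\theta_0,v_0,s_0)$ ranging over an open subset of $J$. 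At points of $J$ where differentiability is lost (the set~\eqref{eq:discontinuity_points}), one does not appeal to tangent spaces directly: instead, as the statement anticipates, the continuity of the flow and of the manifolds means the intersection point is the common one-sided limit of the (unique, transverse) intersection points obtained on each smooth side, hence it persists and is locally unique. Finally, $\tz^*(\theta_0,v_0,s_0;\varepsilon)\in W^s(\tL_\varepsilon^+)\cap W^u(\tL_\varepsilon^-)$ by construction lies in both local stable and unstable manifolds, so by~\eqref{eq:perturbed_hyperbolicity_flow}--\eqref{eq:perturbed_hyperbolicity_flowu} there are $\tz^\pm\in\tL_\varepsilon^\pm$ with $\tp(t;\tz^*;\varepsilon)-\tp(t;\tz^\pm;\varepsilon)\to0$ as $t\to\pm\infty$, and the open sets $\mathcal{J}^\pm$ are the images of $J$ under the asymptotic projections along the fibres. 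I expect the main obstacle to be the bookkeeping around the non-smooth points~\eqref{eq:discontinuity_points}: one must verify that the distance function $d$ extends continuously across the impact times and that the one-sided implicit-function-theorem solutions agree in the limit, which requires tracking how the impact map $P_\varepsilon$ (smooth on each $\tO_{P_\varepsilon^\pm}$ by Remark~\ref{rem:smoothness_of_impact_map}) glues the pieces together — the normal-hyperbolicity and $\varepsilon$-closeness from Proposition~\ref{prop:persistence_Lambda} are what make this gluing controlled.
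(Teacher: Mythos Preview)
Your overall strategy matches the paper's: measure the splitting of $W^u(\tL_\varepsilon^-)$ and $W^s(\tL_\varepsilon^+)$ using the Hamiltonian $X$, identify the first-order term with the Poisson-bracket integral $M$, and apply the implicit function theorem at a simple zero. Two points deserve attention, one a technical choice and one a genuine gap.

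\emph{Choice of section.} The paper does not measure at a generic point $\su(\zeta)$ along the heteroclinic as you propose; it fixes the base point on the switching manifold $\{x=0\}$, i.e.\ at $\su(0)=(0,y_h)$, and varies $\zeta$ only in the $\phi_\U$ and $s$ arguments. The transverse line $\tilde N=\{\tz_0+l(0,0,0,1,0)\}$ then lies entirely inside $\RR^2\times\Sigma\times\T$, so the intersection points $\tz^{u},\tz^{s}$ are well defined and unique even when $\tz_0$ sits at a non-smooth point of the unperturbed manifolds. This is why the paper's $M$ has $\su(t)$ rather than $\su(t+\zeta)$ in the integrand. Your generic-section setup is equivalent after a reparameterization, but the $x=0$ choice is what makes the piecewise-smooth bookkeeping clean.

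\emph{Gap in the boundary/convergence argument.} Your claim that ``boundary terms vanish because on $\tL_\varepsilon^\pm$ the energy $X$ equals $\bar d+O(\varepsilon)$'' is too weak: the difference of two such terms is $O(\varepsilon)$, the same order as $\varepsilon M$, so it would not be subleading. What actually makes the argument work is that $Q^\pm$ are \emph{critical points} of $X^\pm$, so $\nabla X^\pm(Q^\pm)=0$ and hence (i)~$X$ restricted to $\tL_\varepsilon^\pm$ equals $\bar d+O(\varepsilon^2)$, and (ii)~$\{X^\pm,h\}$ along any trajectory in $\tL_\varepsilon^\pm$ is $O(\varepsilon)$. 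The paper exploits this via an add--subtract trick: it writes $X(\tz^u)-X(\tz^-)$ with $\tz^-\in\tL_\varepsilon^-$ and applies the fundamental theorem of calculus to the \emph{difference} $\{X^-,h\}(\tp(r;\tz^u;\varepsilon))-\{X^-,h\}(\tp(r;\tz^-;\varepsilon))$, which decays exponentially so the improper integral converges; then (ii) shows the subtracted piece contributes only at $O(\varepsilon^2)$, and (i) handles the residual boundary value $X^-(\tz^-)=\bar d+O(\varepsilon^2)$. Without this, your bare integral $\int_0^\infty\{X,h\}(\phi^s(t))\,dt$ does not converge, since the integrand tends to an $O(\varepsilon)$ nonzero limit rather than to zero. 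You should incorporate the critical-point observation and the subtraction of the reference orbit on $\tL_\varepsilon^\pm$ explicitly.
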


\begin{proof}
We first study the intersection of the $W^s(\tL_\varepsilon^+)$ and
$W^u(\tL_\varepsilon^-)$ with the section given by $x=0$,
$\RR^2\times\Sigma\times\T$. We consider a point at the
intersection between the unperturbed heteroclinic connection and this section.
We write this point in terms of the parameters in the reference manifold
$(\theta_0,v_0,s_0)\in N$ as
\begin{equation*}
\begin{aligned}
\tz_0(&\theta_0,v_0,s_0):=\left(
\phi_{\U}(\theta_0\alpha(v_0);0,v_0),\su(0),s_0\right)\\
&=\left(
\phi_{\U}(\theta_0\alpha(v_0);0,v_0),0,y_h,s_0\right)
\in \left\{ x=0 \right\}\cap W^u(\tL^-)=\left\{ x=0 \right\}\cap W^s( \tL^+),
\end{aligned}
\end{equation*}
where $(0,y_h)=\su(0)$ and $\phi_{\U}(t;0,v_0)$ is the solution of system~(\ref{eq:field2})
such that $\phi_{\U}(0;0,v_0)=(0,v_0)$. We now introduce a fourth parameter,
$\zeta\in\RR$, in the parameterization of $\tz_0$ as follows
\begin{align}
\tz_0( \theta_0+\frac{\zeta}{\alpha(v_0)},v_0,s_0+\zeta)&:=\left(
\phi_{\U}\left( \theta_0\alpha(v_0)+\zeta;0,v_0\right),0,y_h,s_0+\zeta\right)\label{eq:z0}\\
&\in \left\{ x=0 \right\}\cap W^u(\tL^-)=\left\{ x=0 \right\}\cap W^s(
\tL^+)\nonumber.
\end{align}
Let us consider the line 
\begin{equation*}
\tilde{N}=\left\{\tz_0+l
(0,0,0,1,0),\,l\in\RR  \right\}\subset
\RR^2\times\Sigma\times\T.
\end{equation*}
As the perturbed manifolds are $\varepsilon$-close to the unperturbed ones, if
$\varepsilon>0$ is small enough this
line intersects transversally the manifolds $W^{s/u}(\tL_\varepsilon^{\pmm})$ at
two points
\begin{equation*}
\tz^{u/s}=\tz_0+\left( 0,0,0,O\left( \varepsilon
\right),0 \right)=W^{s/u}(\tL_{\varepsilon}^{\pmm})\cap \tilde{N},
\end{equation*}
which we write as
\begin{align}
\tz^{s/u}&\left(\zeta, \theta_0,v_0,s_0,\varepsilon
\right)\nonumber\\&
=\left(
\phi_\U \left(\theta_0\alpha(v_0)+\zeta;0,v_0\right),0,y^{s/u},s_0+\zeta
\right).
\label{eq:zus}
\end{align}
Note that, if $\tz_0\in\Sigma\times\Sigma\times\T\subset\tS$ and therefore the unperturbed
manifolds are not differentiable at $\tz_0$, the uniqueness of these points is
also guaranteed because $\tilde{N}\subset \Sigma\times\Sigma\times\T$.\\
We measure the distance between these points
using the unperturbed Hamiltonian:
\begin{equation}
\Delta(\theta_0+\frac{\zeta}{\alpha(v_0)},v_0,s_0+\zeta,\varepsilon)=H_0\left( \tz^u
\right)-H_0\left( \tz^s
\right)=X(\tz^u)-X(\tz^s),
\label{eq:distzus}
\end{equation}
where $X(\tz^{s/u})$ is a shorthand for $X\left( \Pi_x
\left(\tz^{s/u}\right),\Pi_y\left(\tz^{s/u}\right)
\right)=X\left(0,y^{s/u}\right)$.  When this distance is zero
\begin{equation}
\Delta(\theta_0+\frac{\zeta}{\alpha(v_0)},v_0,s_0+\zeta,\varepsilon)=0,
\label{eq:distance_equation}
\end{equation}
and we solve this equation for $\zeta$.

To find an expression for $\Delta$ we proceed as usual in Melnikov methods.\\
From Proposition~\ref{prop:persistence_Lambda} we know that there exist points
$\tz^\pm \in \tL_\varepsilon^\pm$ satisfying 
\begin{equation*}
\vert \phi(t;\tz^{s};\varepsilon)-\phi(t;\tz^{+};\varepsilon)\vert
< K^{+}e^{-(\lambda^{+}+O(\varepsilon))\vert t\vert},\,t\to+\infty,
\end{equation*}
\begin{equation}
\vert \phi(t;\tz^{u};\varepsilon)-\phi(t;\tz^{+};\varepsilon)\vert
< K^{+}e^{-(\lambda^{+}+O(\varepsilon))\vert t\vert},\,t\to-\infty.
\label{eq:hyperbolicity_property_again}
\end{equation}
We add and subtract $X(\tz^+)$ and $X(\tz^-)$ to~(\ref{eq:distzus}) and write
$\Delta$ as
\begin{align}
\Delta&(\theta_0+\frac{\zeta}{\alpha(v_0)},v_0,s_0+\zeta,\varepsilon)
=\nonumber\\
&\Delta^-(\theta_0+\frac{\zeta}{\alpha(v_0)},v_0,s_0+\zeta,\varepsilon)
-\Delta^+(\theta_0+\frac{\zeta}{\alpha(v_0)},v_0,s_0+\zeta,\varepsilon)
\label{eq:distzus2b}
\end{align}
where
\begin{align}
\Delta^-(\theta_0+\frac{\zeta}{\alpha(v_0)},v_0,s_0+\zeta,\varepsilon)&=X^-(\tz^u)-X^-(\tz^-)+X^+(\tz^-)\label{eq:Deltamb}\\
\Delta^+(\theta_0+\frac{\zeta}{\alpha(v_0)},v_0,s_0+\zeta,\varepsilon)&=X^+(\tz^s)-X^+(\tz^+)+X^-(\tz^+)\label{eq:Deltapb}.
\end{align}
We now obtain expressions for $\Delta^\pm$. We illustrate the procedure for
$\Delta^-$.
Flowing backwards the points $\tz^u\in W^u(\tz^-)$ and $\tz^-\in
\tL_\varepsilon^-$ until the switching manifold $\tS$ is reached we obtain
points for which their backwards impact sequences are defined for all the
iterates. This is because, backwards in time, their trajectories never reach the
other switching manifold given by $x=0$. This provides a sequence of times for
which the flows $\tp(t;\tz^u;\varepsilon)$ and $\tp(t;\tz^-;\varepsilon)$ hit
the switching manifold $\tS$ for $t<0$. This permits us to apply the fundamental
theorem of calculus in these time intervals and write
\begin{align*}
\Delta^-(\theta_0,v_0,s_0+\zeta,\varepsilon)&\\
&=\underbrace{\varepsilon\int_t^0\left\{ X^-,h \right\}\left( \tp(r;\tz^u;\varepsilon)
\right)dr+X^-\left(\tp(t;\tz^u;\varepsilon)\right)}_{X^-(\tz^u)}\\
&\underbrace{-\varepsilon\int_t^0\left\{ X^-,h \right\}\left( \tp(r;\tz^-;\varepsilon)
\right)dr-X^-\left( \tp\left( t;\tz^-;\varepsilon \right)
\right)}_{-X^-(\tz^-)}\\
&+X^-(\tz^-).
\end{align*}
We now merge these two integrals and use the hyperbolicity
property~(\ref{eq:hyperbolicity_property_again}) to ensure convergence when $t\to-\infty$. 
The same property ensures that $\left|X^-\left( \tp\left(
t;\tz^u;\varepsilon \right) \right)-X^-\left( \tp\left( t;\tz^-;\varepsilon
\right) \right)\right|\to 0$ when $t\to -\infty$. Hence, we can write $\Delta^-$
as
\begin{align*}
\Delta^-(\theta_0,v_0,&s_0+\zeta,\varepsilon)\\
&=\varepsilon\int_{-\infty}^0\left( \left\{ X^-,h \right\}\left(
\tp(r;\tz^u;\varepsilon) \right)-\left\{ X^-,h \right\}\left( \tp\left(
r;\tz^-;\varepsilon
\right) \right) \right)dr\\
&+X\left( \tz^- \right).
\end{align*}
We now expand this in powers of $\varepsilon$. On the one hand, as $Q^-$ is a critical
point of the system associated with the Hamiltonian $X^-$, since
\begin{equation*}
\left(\Pi_x\left( \tz^- \right),\Pi_y\left( \tz^- \right)\right)=Q^-+O(\varepsilon),
\end{equation*}
we have that
\begin{equation*}
\left\{ X^-,h \right\}\left( \tp\left( t;\tz^-;\varepsilon \right)
\right)=O(\varepsilon).
\end{equation*}
On the other hand, and for the same reason, we have that 
\begin{equation*}
X^-(\tz^-)=X^-(Q^-)+O(\varepsilon^2).
\end{equation*}
Hence, using $\tz^u=\tz_0+O(\varepsilon)$ and
Proposition~\ref{prop:persistence_Lambda}, we can write $\Delta^-$ as
\begin{align*}
\Delta^-(\theta_0,v_0,&s_0+\zeta,\varepsilon)\\
&=\varepsilon\int_{-\infty}^0\left\{ X^-,h \right\}\left( \tp\left( r;\tz_0;0
\right) \right)dr\\
&+X^-(Q^-)+O(\varepsilon^2),
\end{align*}
where $\tz_0$ is given in~(\ref{eq:z0}).\\
Proceeding similarly with $\Delta^+$ and $X^+$ and using~\eqref{eq:z0} and that
$X^-(Q^-)=X^+(Q^+)=\bar{d}$, we finally get
\begin{align*}
\Delta(\theta_0&+\frac{\zeta}{\alpha(v_0)},v_0,s_0+\zeta,\varepsilon)=\\
&\varepsilon\int_{-\infty}^\infty\left\{ X,h \right\}\left(
\phi_{\U}\left(\theta_0\alpha(v_0)+\zeta+t;0,v_0\right),\su(t),s_0+\zeta +t\right)dt+O\left( \varepsilon^2
\right)\\
&:=\varepsilon M(\zeta,\theta_0,v_0,s_0)+O(\varepsilon^2).
\end{align*}

Each of these integrals is made up of a sum of integrals given by the impact
sequence associated with $u=0$ of the point $\tz_0$ and whose integrands are
smooth functions. Hence, the function
\begin{equation}
\zeta\longmapsto M(\zeta,\theta_0,v_0,s_0)
\label{eq:melnikov_function2}
\end{equation}
is a smooth function as regular as the flows $\tp^{\pm\pm}$ associated with
system~(\ref{eq:general_field_scattering}) restricted to their respective
domains. This is more apparent when performing the change of variables
$r=\theta_0\alpha(v_0)+\zeta+t$ leading to
\begin{align*}
M(&\zeta,\theta_0,v_0,s_0)\\
&=\int_{-\infty}^\infty\left\{ X,h \right\}\left(
\phi_{\U}\left(r;0,v_0\right),\su(r-\theta_0\alpha(v_0)-\zeta),s_0+r-\theta_0\alpha(v_0)\right)dr.
\end{align*}
Taking $(\theta_0,v_0,s_0)\in J$ given in
Proposition~\ref{prop:heteroclinic_intersection}, let $\bt(\theta_0,v_0,s_0)$ be
a simple zero of~(\ref{eq:melnikov_function2}).  Then, applying the implicit
function theorem to the equation
\begin{equation*}
\frac{\Delta(\theta_0+\frac{\zeta}{\alpha(v_0)},v_0,s_0+\zeta)}{\varepsilon}
=M\left( \zeta,\theta_0,v_0,s_0 \right)+O(\varepsilon)=0
\end{equation*}
at the point $(\zeta,\theta_0,v_0,s_0,\varepsilon)=(\bt,\theta_0,v_0,s_0,0)$, 
if $\varepsilon>0$ is small enough, there exists
\begin{equation}
\zeta^*(\theta_0,v_0,s_0;\varepsilon)=\bt+O(\varepsilon)
\label{eq:gamma_star}
\end{equation}
which solves~(\ref{eq:distance_equation}).
Thus, for every $(\theta_0,v_0,s_0)\in J$, there exists a locally unique point at the
section $\RR^2\times \Sigma\times\T$ belonging to the heteroclinic manifold
between the manifolds
$\tL_\varepsilon^\pm$,
\begin{equation}
\begin{aligned}
\tz_0^*(\theta_0+\frac{\zeta^*}{\alpha(v_0)},v_0,s_0+\zeta^*;\varepsilon)&
=\tz^u(\zeta^*,\theta_0,v_0,s_0;\varepsilon)\\
&=\tz^s(\zeta^*,\theta_0,v_0,s_0;\varepsilon)\\
&\in W^{u}( \tL_\varepsilon^- )\ti W^s(
\tL_{\varepsilon}^+)\cap\Sigma\times\RR^2\times\T,
\end{aligned}
\label{eq:z0star}
\end{equation}
which is of the form
\begin{equation*}
\tz_0^*(\theta_0,v_0,s_0;\varepsilon)=
(\phi_\U(\theta_0\alpha(v_0)+\zeta^*;0,v_0),0,y_h^*,s_0+\zeta^*).
\end{equation*}

Finally we consider the point
\begin{equation}
\tz^*(\theta_0,v_0,s_0;\varepsilon)=\tp(-\zeta^*;\tz^{*}_0;\varepsilon),
\label{eq:zstar}
\end{equation}
which belongs to $W^u(\Lambda_\varepsilon^-)\cap W^s(\Lambda_\varepsilon^+)$ but
is not in $\tS$.  Moreover, since $\zeta^*=\bt+O(\varepsilon)$, as given
in~(\ref{eq:gamma_star}), $\tz^*$ is of the form
\begin{equation*}
\tz^*(\theta_0,v_0,s_0;\varepsilon)=\left(\phi_\U(\theta_0\alpha(v_0);0,v_0),\sigma(-\bt)),s_0\right)+O(\varepsilon),
\end{equation*}
where $\left( \phi_\U(\theta_0\alpha(v);0,v),\sigma(\xi),s \right)$ is the
parameterization of the unperturbed heteroclinic connection introduced
in~(\ref{eq:unperturbed_unstable_manifold_po}). As $\tz^*$ depends on
$(\theta_0,v_0,s_0)\in N$, this permits us to consider two points
\begin{equation}
\tz^\pm(\theta_0,v_0,s_0;\varepsilon)=F_\varepsilon^\pm(\theta_0^\pm,v_0^\pm,s_0^\pm)
=F_0^\pm(\theta_0,v_0,s_0)+O(\varepsilon)
\in\tL_\varepsilon^\pm,
\label{eq:points_in_Lambda}
\end{equation}
such that
\begin{equation*}
\lim_{t\to\pm\infty}\tp(t;\tz^*;\varepsilon)-\tp(t;\tz^\pm;\varepsilon)=0,
\end{equation*}
where $F_\varepsilon^\pm$ are the parameterizations of $\tL_\varepsilon^\pm$
defined in~(\ref{eq:perturbed_Lambdas}) and $(\theta_0^\pm,v_0^\pm,s_0^\pm)\in N$,
with $N$ the reference manifold given in~(\ref{eq:reference_manifold_flow}).
\end{proof}

\subsection{First order properties of the scattering map}\label{sec:first_order_scattering}
Let $\bt=\bt(\theta,v,s)$ be a simple zero of the function~(\ref{eq:melnikov_function}) for any
$(\theta,v,s)\in J\subset N$. Then, for any $(\theta,v,s)\in J$  we can define the
scattering map
\begin{equation}
\begin{array}{cccc}
\Su_\varepsilon:&\mathcal{J}^-\subset \tL_\varepsilon^-&\longrightarrow &\mathcal{J}^+\subset \tL_\varepsilon^+\\
&\tz^-(\theta,v,s;\varepsilon)&\longmapsto&\tz^+(\theta,v,s;\varepsilon)
\end{array}
\label{eq:scattering_map}
\end{equation}
which identifies the points in~(\ref{eq:points_in_Lambda}) connected by the
orbit of the heteroclinic point $\tz^*(\theta,v,s;\varepsilon)\in W^u(\tz^-)\cap
W^s(\tz^+)$, which is of the form
\begin{equation*}
\tz^*(\theta,v,s;\varepsilon)=(\phi_\U(\theta\alpha(v);0,v),\su(-\bt),s)
+O(\varepsilon).
\end{equation*}

From equation~(\ref{eq:points_in_Lambda}), the points $\tz^\pm$ are of the form
\begin{align*}
\tz^\pm(\theta,v,s;\varepsilon)&=F_\varepsilon^\pm(\theta^\pm,v^\pm,s^\pm)\\
&=F_0^\pm(\theta,v,s)+O(\varepsilon)\\
&=(\phi_\U(\theta\alpha(v);0,v),Q^\pm,s)+O(\varepsilon).
\end{align*}

\begin{remark}\label{rem:intersection_other_manifolds}
All the computations for the scattering map $\Sd$ associated with the ``lower''
heteroclinic connection of system $\X$, that is, for the  heteroclinic manifold
close to $W^s(\tL_0^-)=W^u(\tL_0^+)$, are completely analogous.  In particular,
when $\varepsilon=0$, the compositions of both scattering maps become the
identity,
\begin{equation*}
\Su_0\circ\Sd_0=Id,
\end{equation*}
and therefore, there is no possibility of increasing the energy in this case.
\end{remark}

We now want to derive properties of the image of the scattering map given in
equation~(\ref{eq:scattering_map}).  More precisely, we want to measure the
difference of the energy levels of the points $\tz^\pm$. To this end, as is
usual in Melnikov-like theory, we use the Hamiltonian $U$ which, since
$U(0,v)=\frac{v^2}{2}$, is equivalent to measuring the distance in the
coordinate $v$. Hence we consider
\begin{equation}
\Delta U=U(\tz^+)-U(\tz^-),
\label{eq:dist_zpm1}
\end{equation}
where $U(\tz)$ is a shorthand for $U(\Pi_u(\tz),\Pi_v(\tz))$ and the points
$\tz^\pm$ are given in Proposition~\ref{prop:heteroclinic_intersection}.  Note
that this difference is $0$ for $\varepsilon=0$, and therefore $\Delta U
=O(\varepsilon)$.  The following Proposition provides an expression for the
first order term in $\varepsilon$ of $\Delta U$.
\begin{prop}\label{prop:first_order_scattering}
Let $(\theta,v,s)\in J\subset N$ given in
Proposition~\ref{prop:heteroclinic_intersection}, and let
$\bt=\bt(\theta,v,s)$ be a simple zero of the function
\begin{equation*}
\zeta\longrightarrow M(\zeta,\theta,v,s),
\end{equation*}
where $M$ is defined in~(\ref{eq:Melnikov_def}). Let also
$\tz^\pm(\theta,v,s;\varepsilon)\in\tL^\pm_\varepsilon$ be the points given
in~(\ref{eq:points_in_Lambda}), and hence satisfying
\begin{equation*}
\tz^+=\Su_\varepsilon(\tz^-).
\end{equation*}
Then, 
\begin{equation}
\begin{aligned}
U(\tz^+)-U(\tz^-)&=\varepsilon \int_{-\infty}^{0}\Big( \left\{ U,h \right\}\left(
(\phi_\U\left( \theta\alpha(v)+t;0,v \right),\su(t-\bt),s+t)
\right)\\
&-\left\{ U,h \right\}\left(  \phi_\U(\theta\alpha(v)+t;0,v),Q^-\!,s +t
\right) \Big)dt\\
&+\varepsilon \int^{+\infty}_{0}\Big( \left\{ U,h \right\}\left(
(\phi_\U\left( \theta\alpha(v)+t;0,v \right),\su(t-\bt),s+t)
\right)\\
&-\left\{ U,h \right\}\left(  \phi_\U(\theta\alpha(v)+t;0,v),Q^+\!,s +t
\right) \Big)dt\\
&+O(\varepsilon^{1+\rho_2}),
\end{aligned}
\label{eq:melnikov-like_U}
\end{equation}
for some $\rho_2>0$.\\
Moreover,
\begin{align}
<U(\tp(t&;\tz^+;\varepsilon))>-<U\left( \tp\left( t;\tz^-;\varepsilon \right)
\right)>\nonumber\\
&:=\lim_{T\to\infty}\frac{1}{T}\int_0^T\left( U\left( \tp(t;\tz^+;\varepsilon \right)
\right)-U\left( \tp\left( -t;\tz^-;\varepsilon \right) \right)dt\nonumber\\
&=\varepsilon\left(\lim_{T\to\infty}\frac{1}{T}\int_0^T\int_{-t}^t\left\{ U,h
\right\}\Big( \phi_\U\left(\theta\alpha(v)+r;0,v\right),\su\left( r-\bt
\right),s+r \Big)drdt\right)\label{eq:average_formula_heteroclinic}\\
&\quad+O(\varepsilon^{1+\rho_2})\nonumber.
\end{align}
\end{prop}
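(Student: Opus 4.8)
The plan is to run the Melnikov computation of Proposition~\ref{prop:heteroclinic_intersection} again, but measuring with the Hamiltonian $U$ instead of $X$ and exploiting that $U$ and $X$ involve disjoint pairs of conjugate variables, so $\{U,U+X\}\equiv 0$ and hence, along any orbit of the perturbed flow and on each smooth branch,
\[
\frac{d}{dt}U\big(\tp(t;\tz;\varepsilon)\big)=\varepsilon\{U,h\}\big(\tp(t;\tz;\varepsilon)\big).
\]
Since $U$ is $C^0$ across both switching manifolds (only $\{U,h\}$ inherits a jump from $(V^\pm)'$ at $u=0$), this identity integrates across impacts, so the fundamental theorem of calculus applies on any time interval.

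First I would prove~\eqref{eq:melnikov-like_U}. Let $\tz^*=\tz^*(\theta,v,s;\varepsilon)\in W^u(\tz^-)\cap W^s(\tz^+)$ and $\tz^\pm\in\tL_\varepsilon^\pm$ be the points furnished by Proposition~\ref{prop:heteroclinic_intersection}. Integrating the displayed identity along the orbits of $\tz^*$ and of $\tz^-$ on $(-\infty,0]$ and subtracting, the boundary term $U(\tp(t;\tz^*;\varepsilon))-U(\tp(t;\tz^-;\varepsilon))$ tends to $0$ as $t\to-\infty$ and the integrand is integrable there, both by~\eqref{eq:perturbed_hyperbolicity_flowu}; this gives
\[
U(\tz^*)-U(\tz^-)=\varepsilon\int_{-\infty}^{0}\Big(\{U,h\}\big(\tp(r;\tz^*;\varepsilon)\big)-\{U,h\}\big(\tp(r;\tz^-;\varepsilon)\big)\Big)\,dr,
\]
and the mirror computation on $[0,+\infty)$ using~\eqref{eq:perturbed_hyperbolicity_flow} gives $U(\tz^+)-U(\tz^*)$ as the same integral against $\tz^+$. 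Adding, and then replacing the perturbed flow by $\tp(\cdot;\cdot;0)$ and $\tz^*,\tz^\pm$ by their $O(\varepsilon)$ approximations $(\phi_\U(\theta\alpha(v);0,v),\su(-\bt),s)$ and $(\phi_\U(\theta\alpha(v);0,v),Q^\pm,s)$, the integrand $\{U,h\}(\tp(r;\tz^*;0))$ becomes $\{U,h\}(\phi_\U(\theta\alpha(v)+r;0,v),\su(r-\bt),s+r)$ and $\{U,h\}(\tp(r;\tz^\pm;0))$ becomes the same with $\su(r-\bt)$ replaced by $Q^\pm$ (since $Q^\pm$ is a critical point of $\X^\pm$, hence fixed), which is exactly~\eqref{eq:melnikov-like_U}. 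The replacement error is handled by the usual device: split each integral at $|r|=c|\log\varepsilon|$, bound the central window by $O(\varepsilon|\log\varepsilon|)$ --- on it the flows $\tp^{\pm\pm}$, the parameterizations $G_\varepsilon^\pm$ and the impact times are $\varepsilon$-close to the unperturbed ones (Remark~\ref{rem:smoothness_of_impact_map}, Proposition~\ref{prop:persistence_Lambda}) and $\{U,h\}$ is Lipschitz --- and bound the two tails by a negative power of $\varepsilon$ via~\eqref{eq:perturbed_hyperbolicity_flow}--\eqref{eq:perturbed_hyperbolicity_flowu}; since the whole expression carries a prefactor $\varepsilon$, choosing $c$ appropriately turns this into the claimed $O(\varepsilon^{1+\rho_2})$ with some $\rho_2\in(0,1)$.

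For~\eqref{eq:average_formula_heteroclinic} I would integrate the same identity forward from $\tz^+$ and backward from $\tz^-$ to get
\[
U\big(\tp(t;\tz^+;\varepsilon)\big)-U\big(\tp(-t;\tz^-;\varepsilon)\big)=\big(U(\tz^+)-U(\tz^-)\big)+\varepsilon\int_0^t\{U,h\}\big(\tp(r;\tz^+;\varepsilon)\big)dr+\varepsilon\int_{-t}^0\{U,h\}\big(\tp(r;\tz^-;\varepsilon)\big)dr,
\]
substitute the expression for $U(\tz^+)-U(\tz^-)$ just obtained, pass to the unperturbed flow, and regroup. The piece $\int_0^{\infty}\big(\{U,h\}(\su(r-\bt))-\{U,h\}(Q^+)\big)dr$ of that expression, together with $\int_0^t\{U,h\}(Q^+)dr$, equals $\int_0^t\{U,h\}(\su(r-\bt))dr+\int_t^{\infty}\big(\{U,h\}(\su(r-\bt))-\{U,h\}(Q^+)\big)dr$, and the tail is $O(e^{-\lambda^+t})$ because $\su(r-\bt)\to Q^+$ exponentially and $\{U,h\}$ is Lipschitz; the negative half is symmetric with $Q^-$. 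Hence $U(\tp(t;\tz^+;\varepsilon))-U(\tp(-t;\tz^-;\varepsilon))=\varepsilon\int_{-t}^{t}\{U,h\}(\phi_\U(\theta\alpha(v)+r;0,v),\su(r-\bt),s+r)\,dr+O(\varepsilon e^{-\lambda t})+O(\varepsilon^{1+\rho_2})$ with $\lambda=\min(\lambda^+,\lambda^-)$, and applying $\frac{1}{T}\int_0^T(\cdot)\,dt$ and letting $T\to\infty$ annihilates the $O(\varepsilon e^{-\lambda t})$ term, leaving~\eqref{eq:average_formula_heteroclinic}.

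The main obstacle is the error bookkeeping that upgrades the naive $O(\varepsilon^2)$ to the honest $O(\varepsilon^{1+\rho_2})$, here complicated by the piecewise-smooth structure: each integral above is really a finite (on any bounded interval) sum over impact subintervals whose endpoints drift by $O(\varepsilon)$, so one must check that the $\varepsilon$-closeness of $\tp^{\pm\pm}$, of $\tG_\varepsilon^\pm$ (compact, by Remark~\ref{rem:compact_tG}) and of the impact times really makes the integrands uniformly $\varepsilon$-close on the growing windows $|r|\le c|\log\varepsilon|$. A secondary point is that the Cesàro limit defining the averaged quantity exists; this is automatic once the difference has been reduced to $\varepsilon\int_{-t}^t\{U,h\}(\cdots\su(r-\bt)\cdots)\,dr$ up to terms of vanishing time-mean, because along an unperturbed orbit lying on $\tL^\pm$ the function $\{U,h\}$ is $\partial_s h$ minus a total $r$-derivative of the bounded function $h$ evaluated along the orbit, so its time average is zero.
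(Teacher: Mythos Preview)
Your approach is essentially the paper's. For~\eqref{eq:melnikov-like_U} you split $U(\tz^+)-U(\tz^-)$ through the heteroclinic point $\tz^*$, use $\frac{d}{dt}U=\varepsilon\{U,h\}$ together with the exponential convergence~\eqref{eq:perturbed_hyperbolicity_flow}--\eqref{eq:perturbed_hyperbolicity_flowu} to get the exact integral representation, and then replace the perturbed flow by the unperturbed one via a split at $|r|=c\ln(1/\varepsilon)$. The paper isolates this last replacement as a separate lemma (Lemma~\ref{lem:per-unpert_flows}): iterating the Lipschitz bound on the impact maps $P_0^\pm$ and applying Gronwall between consecutive impacts gives $|\tp(t;\tz^\pm;\varepsilon)-\tp(t;\tz_0^\pm;0)|=O(\varepsilon^\rho)$ uniformly on $0\le t\le c\ln(1/\varepsilon)$. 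You correctly flag exactly this as ``the main obstacle''; it is the only nontrivial ingredient beyond what you wrote.

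For~\eqref{eq:average_formula_heteroclinic} there is one ordering issue worth fixing. You ``pass to the unperturbed flow'' in $\int_0^t\{U,h\}(\tp(r;\tz^+;\varepsilon))\,dr$ \emph{before} regrouping, but Lemma~\ref{lem:per-unpert_flows} only controls the flow on windows of length $c\ln(1/\varepsilon)$, so that replacement is not uniform in $t$ and your claimed $O(\varepsilon^{1+\rho_2})$ error, as written, is not valid for all $t$. The paper avoids this by regrouping \emph{first} with the perturbed flow---your identity $\int_0^\infty(\cdot-\cdot)+\int_0^t(\cdot)=\int_0^t(\cdot)+\int_t^\infty(\cdot-\cdot)$, but applied to $\tp(\cdot;\tz^*;\varepsilon)$ and $\tp(\cdot;\tz^\pm;\varepsilon)$---to obtain the \emph{exact} identity
\[
\langle U(\tp(t;\tz^+;\varepsilon))\rangle-\langle U(\tp(t;\tz^-;\varepsilon))\rangle
=\varepsilon\lim_{T\to\infty}\frac{1}{T}\int_0^T\!\!\int_{-t}^t\{U,h\}\big(\tp(r;\tz^*;\varepsilon)\big)\,dr\,dt,
\]
the tails $\int_t^\infty(\{U,h\}(\tz^*)-\{U,h\}(\tz^+))$ and $\int_{-\infty}^{-t}(\{U,h\}(\tz^*)-\{U,h\}(\tz^-))$ being $O(e^{-\lambda t})$ by~\eqref{eq:perturbed_hyperbolicity_flow}--\eqref{eq:perturbed_hyperbolicity_flowu} and hence killed by Ces\`aro. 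Only then does the paper expand in $\varepsilon$. Your computation is the same; just swap the order of ``pass to unperturbed'' and ``regroup'' and the gap disappears. (The paper's own final expansion step is stated rather tersely, so you are not actually behind on rigor once the reorder is made.)
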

In order to prove this result, we will use the following Lemma, whose proof
is given after the proof of Proposition~\ref{prop:first_order_scattering}.
\begin{lem}
Let
\begin{align*}
\tz^\pm&=F^\pm_\varepsilon(\theta,v,s)\in\tL_\varepsilon^\pm\\
\tz^\pm_0&=F^\pm_0(\theta,v,s)\in\tL_0^\pm.
\end{align*}
Given $c>0$, there exists $\rho>0$ independent of $\varepsilon$ such that,
if $\varepsilon>0$ is small enough, then
\begin{equation*}
\left| \tp(t;\tz^\pm;\varepsilon)-\tp(t;\tz^\pm_0;0)\right|=O(\varepsilon^\rho)
\end{equation*}
for $0\le t\le c\ln\frac{1}{\varepsilon}$.
\label{lem:per-unpert_flows}
\end{lem}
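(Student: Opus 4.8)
The plan is to reduce everything to the impact map on $\tS$ and to exploit that, by Proposition~\ref{prop:persistence_Lambda}, one has $\tp(t;\tz^\pm;\varepsilon)\in\tL_\varepsilon^\pm$ and $\tp(t;\tz_0^\pm;0)\in\tL_0^\pm$ for all $t$, with $\tL_\varepsilon^\pm$ being $\varepsilon$-close to $\tL_0^\pm$. Since on $\tL_0^\pm$ the $(x,y)$-component is identically $Q^\pm$, which lies at positive distance from the switching manifold $\{x=0\}$, for $\varepsilon$ small enough both orbits stay uniformly bounded away from $\{x=0\}$: they cross only the switching manifold $\tS=\{u=0\}$, and as every return time to $\tS$ is bounded below by a positive constant on $[v_1,v_2]$, in the window $0\le t\le c\ln(1/\varepsilon)$ there are $N=N(\varepsilon)=O(\ln(1/\varepsilon))$ such crossings. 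It therefore suffices to control (i) the $N$ successive impacts on $\tS$, and (ii) the smooth flow arcs, of bounded length, between consecutive impacts.

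For (i) I would use the coordinates $(v,s)$ on $\tG_\varepsilon^\pm$ and $\tG_0^\pm$ supplied by $G_\varepsilon^\pm$ and $G_0^\pm$ (recall $\Pi_{v,s}\circ G_\varepsilon^\pm=\mathrm{Id}$ and $G_0^\pm(v,s)=(v,Q^\pm,s)$), and write $\tw^\pm=G_\varepsilon^\pm(v,s)\in\tG_\varepsilon^\pm$, $\tw_0^\pm=G_0^\pm(v,s)\in\tG_0^\pm$. By~\eqref{eq:unperturbed_impact_map} the unperturbed impact map restricted to $\tG_0^\pm$ is, in these coordinates, the integrable twist $(v,s)\mapsto(v,s+\alpha(v))$, whereas on the compact manifold $\tG_\varepsilon^\pm$ (compactness from Remark~\ref{rem:compact_tG}) the perturbed impact map reads $(v,s)\mapsto(v+O(\varepsilon),\,s+\alpha(v)+O(\varepsilon))$, uniformly. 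Iterating both from the same $(v,s)$, the $v$-coordinate of the perturbed orbit changes by $O(\varepsilon)$ at each step, hence by $O(\varepsilon n)$ after $n$ steps; consequently the two $s$-coordinates differ by $\sum_{k<n}(\alpha(v_k)-\alpha(v)+O(\varepsilon))=\sum_{k<n}O(\varepsilon k)=O(\varepsilon n^2)$. Together with $|G_\varepsilon^\pm-G_0^\pm|=O(\varepsilon)$ this gives $|P_\varepsilon^n(\tw^\pm)-P_0^n(\tw_0^\pm)|=O(\varepsilon n^2)$, which for $n\le N$ is $O(\varepsilon\ln^2(1/\varepsilon))$.

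For (ii), between two consecutive impacts with $\tS$ the flow is one of the smooth flows $\tp^{\pm\pm}$ over a time interval of length at most $\alpha(v)+O(\varepsilon)$, and the perturbed and unperturbed vector fields differ by $O(\varepsilon)$ on the common branch; one application of Gronwall's lemma over this bounded interval amplifies the discrepancy at the preceding impact by a fixed factor, so it remains $O(\varepsilon\ln^2(1/\varepsilon))$. It remains to align the two time parameterisations: the mismatch of impact times accumulates as $|t_n^\varepsilon-t_n^0|\le\sum_{k<n}|\alpha(v_k)-\alpha(v)+O(\varepsilon)|=O(\varepsilon n^2)=O(\varepsilon\ln^2(1/\varepsilon))$, which is much smaller than the minimal return time; as the flow is Lipschitz in $t$ (the vector fields are bounded on the relevant compact set), re-expressing both orbits at the same physical time $t$ costs only a further $O(\varepsilon\ln^2(1/\varepsilon))$. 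Collecting the three contributions gives $|\tp(t;\tz^\pm;\varepsilon)-\tp(t;\tz_0^\pm;0)|=O(\varepsilon\ln^2(1/\varepsilon))$ for $0\le t\le c\ln(1/\varepsilon)$, which is $O(\varepsilon^\rho)$ for any $\rho\in(0,1)$ once $\varepsilon$ is small, proving Lemma~\ref{lem:per-unpert_flows}.

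The delicate step is (i): a naive step-by-step Gronwall estimate on the impact map would multiply the initial $O(\varepsilon)$ discrepancy by the $n$-th power of the single-step Lipschitz constant of $P_0|_{\tG_0^\pm}$, which exceeds $1$ and would destroy the estimate over a window of length $O(\ln(1/\varepsilon))$ when $c$ is large. What rescues the argument is that the unperturbed inner dynamics on $\tL_0^\pm$ is, owing to the product structure of the unperturbed system, an integrable twist whose $n$-th iterate has Lipschitz constant growing only linearly in $n$ (equivalently, the constant $\bmu$ in~\eqref{eq:normally_hyperbolicity_map} can be taken arbitrarily close to $1$); hence the two orbits, both confined to the $\varepsilon$-close invariant manifolds, diverge only polynomially in the number of impacts, and polynomial growth over a logarithmic time window stays $O(\varepsilon^\rho)$. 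Normal hyperbolicity enters only indirectly, through Proposition~\ref{prop:persistence_Lambda}, as the mechanism that confines the orbits to those invariant manifolds.
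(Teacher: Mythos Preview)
Your argument is correct, and in fact sharper than the paper's. Both proofs share the same scaffolding: flow back to $\tS$, track the impact sequences of $\tw^\pm=G_\varepsilon^\pm(v,s)$ and $\tw_0^\pm=G_0^\pm(v,s)$, and use Gronwall on the smooth flow arcs between impacts. The divergence is in how the impact error is propagated. The paper applies a crude step-by-step Lipschitz bound $|P_0^\pm(\tw_{n-1,\varepsilon})-P_0^\pm(\tw_{n-1,0})|\le K_{P_0}|\tw_{n-1,\varepsilon}-\tw_{n-1,0}|$ with $K_{P_0}>1$, arriving at $|\tw_{n,\varepsilon}-\tw_{n,0}|\le M(K_{P_0})^n\varepsilon$; this forces the number of impacts $n=c_2\ln(1/\varepsilon)$ to have $c_2\ln K_{P_0}<1$, so the paper's proof only delivers the conclusion for $c$ below a threshold (which suffices for the application in Proposition~\ref{prop:first_order_scattering}, where $c$ may be chosen freely). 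You instead use that both impact sequences are confined to the $\varepsilon$-close invariant manifolds $\tG_\varepsilon^\pm$ and $\tG_0^\pm$, so the $(x,y)$-components stay within $O(\varepsilon)$ of $Q^\pm$ uniformly in $n$, and the only drift to control is in the $(v,s)$-coordinates, where the unperturbed dynamics is the integrable twist $(v,s)\mapsto(v,s+\alpha(v))$. This yields the polynomial bound $O(\varepsilon n^2)$ and hence $O(\varepsilon\ln^2(1/\varepsilon))=O(\varepsilon^\rho)$ for every $\rho<1$ and every $c>0$, matching the lemma as stated and with a better exponent. Your final paragraph correctly diagnoses why the naive Gronwall route (which is precisely what the paper does) is weaker.
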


\begin{proof}{\emph{Of Proposition~\ref{prop:first_order_scattering}}}\\
Let $(\theta,v,s)\in J$ and let $\bt$ be a simple zero of the 
Melnikov function~(\ref{eq:melnikov_function}), (\ref{eq:Melnikov_def}).  Let
also $\zeta^*(\theta,v,s,\varepsilon)$ be the solution
of~(\ref{eq:distance_equation}) given by the implicit function theorem near
$\bt$, and $\tz^*(\theta,v,s;\varepsilon)$ the heteroclinic point defined
in~(\ref{eq:zstar}).\\
Let us write $\Delta U$ as
\begin{equation}
\Delta U=\Delta U_++\Delta U_-,
\label{eq:dist_zpm2}
\end{equation}
where
\begin{align*}
\Delta U_+&=U(\tz^+)-U(\tz^*)\\
\Delta U_-&=U(\tz^*)-U(\tz^-).
\end{align*}
and  $\tz ^\pm$ are the points given in Proposition \ref{prop:persistence_Lambda}
satisfying \eqref{eq:perturbed_hyperbolicity_flow} and \eqref{eq:perturbed_hyperbolicity_flowu}.
We first derive an expression for the difference $\Delta U_+$;
an analogous one can be obtained for $\Delta U_-$.\\

Proceeding similarly as in the proof of Proposition~\ref{prop:heteroclinic_intersection} we
apply the fundamental theorem of calculus in the time intervals given by the
direct impact sequences of the points obtained by flowing $\tz^+$ and $\tz^*$,
forwards in time, until the switching manifold $\tS$ is reached. This provides
expressions for $U(\tz^+)$ and $U\left( \tz^* \right)$ which allow us to write
$\Delta U_+$ as
\begin{equation}
\begin{aligned}
\Delta U_+&=\varepsilon\int_0^t\left( \left\{ U,h \right\}\left(
\tp(r;\tz^*;\varepsilon)
\right)-\left\{ U,h \right\}\left( \tp\left( r;\tz^+;\varepsilon \right) \right)
\right)dr\\
&+U\left( \tp(t;\tz^+;\varepsilon) \right)-U\left( \tp\left( t;\tz^*;\varepsilon
\right) \right).
\end{aligned}
\label{eq:DeltaU1b}
\end{equation}
As $\tz^*\in W^s\left( \tz^+ \right)$ and $U$ is continuous,
formula~\eqref{eq:hyperbolicity_property_again} implies that the second line
in~(\ref{eq:DeltaU1b}) tends to zero as $t\to\infty$. As $\Delta U_+$ is
independent of $t$ even if the impacts sequences of $\tz^+$ and $\tz^*$ are
different, the integral in~(\ref{eq:DeltaU1b}) converges when $t\to\infty$. This
can also be seen by arguing as in the proof of
Proposition~\ref{prop:persistence_Lambda} and exponentially bounding the
integrand.  Hence, we obtain that
\begin{equation}
\Delta U_+=\varepsilon\int_0^\infty\left\{ U,h \right\}\left( \tp\left(
r;\tz^*;\varepsilon
\right) \right)-\left\{ U,h \right\}\left( \tp\left( r;\tz^+;\varepsilon \right)
\right)dr.
\label{eq:DeltaU2b}
\end{equation}
We now want to expand integral~(\ref{eq:DeltaU2b}) in powers of $\varepsilon$.
Unlike in the proof of Proposition~\ref{prop:heteroclinic_intersection}, when
using the Hamiltonian $U$ instead of $X$, the first order term in $\varepsilon$
of the Poisson bracket $\left\{ U,h \right\}$ restricted to the manifold
$\tL_\varepsilon^+$ does not vanish. For finite fixed times, the difference
between the perturbed and unperturbed flows restricted to $\tL_\varepsilon^+$ is
of order $O(\varepsilon)$. However, as the integral is performed from $0$ to
$\infty$, one has to proceed carefully.\\
Using Lemma~\ref{lem:per-unpert_flows}, we can split the integral
in~(\ref{eq:DeltaU2b}) to obtain
\begin{equation}
\begin{aligned}
\Delta U_+&=\varepsilon \int^{c\ln\frac{1}{\varepsilon}}_0
\left(\left\{ U,h \right\}
\left( \tp\left( r;\tz^*;\varepsilon\right)
\right)-
\left\{ U,h \right\}
\left( \tp\left( r;\tz^+;\varepsilon \right)
\right)\right)dr\\
&+\varepsilon \int_{c\ln(\frac{1}{\varepsilon})}^{\infty}
\left(\left\{ U,h \right\}
\left( \tp\left( r;\tz^*;\varepsilon \right)
\right)-
\left\{ U,h \right\}
\left( \tp\left( r;\tz^+;\varepsilon \right)
\right)\right)dr\\
&= \varepsilon \int^{c\ln\frac{1}{\varepsilon}}_0
\left(\left\{ U,h \right\}
\left( \tp\left( r;\tz^*_0;0\right)
\right)-
\left\{ U,h \right\}
\left( \tp\left( r;\tz^+_0;0 \right)
\right)\right)dr+O(\varepsilon^{\rho+1}\ln\frac{1}{\varepsilon})\\
&+\varepsilon \int_{c\ln\frac{1}{\varepsilon}}^{\infty}
\left(\left\{ U,h \right\}
\left( \tp\left( r;\tz^*;\varepsilon \right)
\right)-
\left\{ U,h \right\}
\left( \tp\left( r;\tz^+;\varepsilon \right)
\right)\right)dr,
\end{aligned}
\label{eq:integral5bb}
\end{equation}
where $\tz_0^+$ and $\tz_0^*$ are $\tz^+$ and $\tz^*$ for $\varepsilon=0$,
respectively and are given in \eqref{eq:zsz+0}.\\
We now consider the last integral in~(\ref{eq:integral5bb}).  As mentioned
above, arguing as in the proof of Proposition~\ref{prop:persistence_Lambda} with the
sequences $a_i$ and $b_i$ defined in~(\ref{eq:t_assumption}), we can
exponentially bound the integrand of the last integral in~(\ref{eq:integral5bb})
and write
\begin{align*}
&\int_{c\ln\frac{1}{\varepsilon}}^{\infty}
\left(\left\{ U,h \right\}
\left( \tp\left( r;\tz^*;\varepsilon \right)
\right)-
\left\{ U,h \right\}
\left( \tp\left( r;\tz^+;\varepsilon \right)
\right)\right)dr\\
&<\int_{c\ln\frac{1}{\varepsilon}}^{\infty}
K^+e^{-\lambda^+t}dt=\frac{K^+}{\lambda^+}\varepsilon^{\bar{\rho}},
\end{align*}
with $\bar{\rho}=c\lambda^+>0$. This allows us to write~(\ref{eq:DeltaU2b}) as
\begin{align*}
\Delta U_+&=\varepsilon \int^{\infty}_0
\left(\left\{ U,h \right\}
\left( \tp\left( r;\tz^*;\varepsilon \right)
\right)-
\left\{ U,h \right\}
\left( \tp\left( r;\tz^+;\varepsilon \right)
\right)\right)dr\\
&=\varepsilon \int^{c\ln\frac{1}{\varepsilon}}_0
\left(\left\{ U,h \right\}
\left( \tp\left( r;\tz^*_0;0\right)
\right)-
\left\{ U,h \right\}
\left( \tp\left( r;\tz^+_0;0 \right)
\right)\right)dr\\
&+O(\varepsilon^{\rho+1}\ln\frac{1}{\varepsilon})+O(\varepsilon^{\bar{\rho}+1}).
\end{align*}
By reverting the last argument, we can complete this integral from
$c\ln\frac{1}{\varepsilon}$ to $\infty$ to finally obtain
\begin{align*}
\Delta U_+&= \varepsilon\int^{\infty}_0 \Big(\left\{ U,h \right\}
\left( \tp\left( r;\tz_0^*;0 \right) \right)\\
&\qquad- \left\{ U,h \right\} \left(
\tp\left( r;\tz^+_0;0 \Big)
\right)\right)dr+O(\varepsilon^{1+\rho_1})\\
&=\varepsilon \int^{+\infty}_{0}\Big( \left\{ U,h \right\}\left(
(\phi_\U\left( \theta\alpha(v)+t;0,v \right),\su(t-\bt),s+t)
\right)\\
&-\left\{ U,h \right\}\left(  \phi_\U(\theta\alpha(v)+t;0,v),Q^+\!,s +t
\right) \Big)dt+O(\varepsilon^{1+\rho_1}).
\end{align*}
for some $\rho_1>0$.
Finally, proceeding similarly for $\Delta^-$ we obtain
expression~(\ref{eq:melnikov-like_U}) for some $\rho_2>0$.

We now obtain the formula given in~(\ref{eq:average_formula_heteroclinic}).
Proceeding similarly as before we apply the fundamental theorem of calculus and
obtain
\begin{align*}
<U(\tp(t&;\tz^+;\varepsilon))>-<U\left( \tp\left( t;\tz^-;\varepsilon \right)
\right)>\\
&=U\left( \tz^+ \right)-U\left( \tz^-
\right)\\
&\quad+\varepsilon\Bigg(\lim_{T\to\infty}\frac{1}{T}\int_0^T\Bigg(\int_0^t\left\{ U,h
\right\}\left( \tp\left( r;\tz^+;\varepsilon \right)  \right) dr\\
&\quad+\int_{-t}^0\left\{ U,h \right\}\left(\tp\left( r;\tz^-;\varepsilon
\right) 
\right)dr\Bigg)dt\Bigg).
\end{align*}
Using~\eqref{eq:DeltaU2b} and the equivalent one for $\Delta U_-$, we get
\begin{align*}
U(\tz^+)-U(\tz^-)&=\varepsilon\int_0^\infty\left\{ U,h \right\}\left( \tp\left(
r;\tz^*;\varepsilon
\right) \right)-\left\{ U,h \right\}\left( \tp\left( r;\tz^+;\varepsilon \right)
\right)dr\\
&+\varepsilon\int_{-\infty}^0\left\{ U,h \right\}\left( \tp\left(
r;\tz^*;\varepsilon
\right) \right)-\left\{ U,h \right\}\left( \tp\left( r;\tz^-;\varepsilon \right)
\right)dr.
\end{align*}

We now compute
\begin{align*}
<U(\tp(t&;\tz^+;\varepsilon))>-<U\left( \tp\left( t;\tz^-;\varepsilon \right)
\right)>-
\varepsilon\lim_{T\to\infty}\frac{1}{T}\int_0^T\int_{-t}^t\left\{ U,h
\right\}\left( \tp\left( r;\tz^*;\varepsilon \right) \right)drdt.
\end{align*}
Using formula for $U(\tz^+)-U(\tz^-)$ obtained before, this difference becomes
\begin{align*}
&\varepsilon\lim_{T\to\infty}\frac{1}{T}\int_0^T\int_t^\infty \left\{ U,h \right\}\left(
\tp\left( r;\tz^*;\varepsilon \right) \right)-\left\{ U,h \right\}\left(
\tp\left( r;\tz^+;\varepsilon \right)
\right)drdt\\
&-\varepsilon \lim_{T\to\infty}\frac{1}{T}\int_0^T\int_{-\infty}^{-t}\left\{
U,h \right\}\left( \tp\left( r;\tz^*;\varepsilon \right) \right)-\left\{ U,h
\right\}\left( \tp\left( r;\tz^-;\varepsilon \right)
\right)drdt=0,
\end{align*}
due to the asymptotic properties~\eqref{eq:hyperbolicity_property_again}.
This gives,
\begin{align*}
<U(\tp(t&;\tz^+;\varepsilon))>-<U\left( \tp\left( t;\tz^-;\varepsilon \right)
\right)>\\
&=\varepsilon\lim_{T\to\infty}\int_0^T\int_{-t}^t\left\{ U,h \right\}\left(
\tp\left( r;\tz^*;\varepsilon \right)
\right)drdt,
\end{align*}
which expanding in powers of $\varepsilon$ gives the desired formula
in~\eqref{eq:average_formula_heteroclinic}.
\end{proof}

\begin{proof}{\emph{Of Lemma~\ref{lem:per-unpert_flows}}}\\
We proceed with the points $\tz^+$ and $\tz^+_0$; analogous arguments hold for
$\tz^-$ and $\tz^-_0$.
Let us first flow the points $\tz^+$ and $\tz^+_0$ backwards in time until their
trajectories reach
the section $\tS^+$. This provides two points, $(0,\tw^+)$ and $(0,\tw^+_0)$,
respectively, such that
\begin{align*}
\tw^+&=\left( \omega^+,s^+ \right)=G_\varepsilon^+(v,s)\in\tG_\varepsilon^+\\
\tw^+_0&=\left( \omega^+_0,s^+_0 \right)=G_0^+(v,s)\in\tG_0^+.
\end{align*}
We now proceed by considering the trajectories of these last points.
Let 
\begin{align*}
\tz^{+}_{n,\varepsilon}&=(0,\tw^{+}_{n,\varepsilon})=\impactwzz{+}{n,\varepsilon}\\
\tz^{+}_{n,0}&=(0,\tw^{+}_{n,0})=\impactwzz{+}{n,0}
\end{align*}
be the impact sequences associated with $\tw^+$ and $\tw^+_0$, respectively.  We
first write
\begin{align*}
\Delta(t)&:=\left| \tp(t;(0,\tw^+);\varepsilon)-\tp(t;(0,\tw^+_0);0)\right|\\
&=\left|\tp(t-s_{n,\varepsilon}^{+}+s^+;\tz^{+}_{n,\varepsilon};\varepsilon)-
\tp(t-s_{n,0}^{+}+s^+_0;\tz^{+}_{n,0};0)\right|.
\end{align*}
Proceeding as in the proof of Proposition~\ref{prop:first_order_scattering}, we
define
\begin{align*}
a_n&=\min\left( s_{n,\varepsilon}^{+}-s^+,s^{+}_{n,0}-s^+_0 \right)\\
b_n&=\max\left(  s_{n,\varepsilon}^{+}-s^+,s^{+}_{n,0}-s^+_0\right).
\end{align*}
such that $t\in[a_n,b_{n+1}]$.\\
Let $\F_\varepsilon$ be the piecewise-smooth vector field associated with the
perturbed system~(\ref{eq:general_field_scattering}) and $\F_0$ the one for
$\varepsilon=0$; applying the fundamental theorem of calculus we get
\begin{align*}
\Delta(t)&\le \left| \tz^{+}_{n,\varepsilon}-\tz^{+}_{n,0} \right|\\
&+\int_{a_n}^{b_{n}}\left|\F_\varepsilon\left(\tp\left(t-s^{+}_{n,\varepsilon};
\tz^{+}_{n,\varepsilon};\varepsilon\right)\right)
-\F_0\left(\tp\left(t-s^{+}_{n,0}+s_0^+;\tz^{+}_{n,0};0\right)\right)  \right|dt\\
&+\int_{b_n}^{a_{n+1}}\left|\F_\varepsilon\left(\tp\left(t-s^{+}_{n,\varepsilon};
\tz^{+}_{n,\varepsilon};\varepsilon\right)\right)
-\F_0\left(\tp\left(t-s^{+}_{n,0}+s_0^+;\tz^{+}_{n,0};0\right)\right)  \right|dt\\
&+\int^{b_{n+1}}_{a_{n+1}}\left|\F_\varepsilon
\left(\tp\left(t-s^{+}_{n,\varepsilon};
\tz^{+}_{n,\varepsilon};\varepsilon\right)\right)
-\F_0\left(\tp\left(t-s^{+}_{n,0}+s_0^+;\tz^{+}_{n,0};0\right)\right)  \right|dt,
\end{align*}
For the first and third integrals, as both flows do not belong to the same domain
$S^\pm\times S^\pm\times\T$, it is not the case that $\F_\varepsilon\to
\F_0$ as $\varepsilon\to0$. However, their difference is bounded by some
constant $K_1>0$.\\
For the middle integral, both flows belong to the same domain and 
 $\F_\varepsilon$ and $\F_0$ are $\varepsilon$-close. Hence, there exists
a constant $K>0$ such that
\begin{align*}
\Delta(t)&\le \left| \tz_{n,\varepsilon}^+-\tz_{n,0}^+ \right|\\
&+K_1 (b_n-a_n)\\
&+\int_{b_n}^{a_{n+1}}K \left| \tp\left( t;\tz_{n,\varepsilon}^+;\varepsilon \right)
-\tp\left( t;\tz_{n,0}^+;0 \right) \right|dt\\
&+K_1(a_{n+1}-b_{n+1}).
\end{align*}
Let us bound $\left| \tz^{+}_{n,\varepsilon}-\tz^{+}_{n,0} \right|$.  We first
write
\begin{align*}
\left| \tz^{+}_{n,\varepsilon}-\tz^{+}_{n,0} \right|&=\left|
\tw^+_{n,\varepsilon}-\tw^{+}_{n,0} \right|\\
&=\left| P^\pm_\varepsilon(\tw^{+}_{n-1,\varepsilon})-P_0^\pm(\tw^{+}_{n-1,0}) \right|\\
&=\Big|
P^\pm_\varepsilon(\tw^{+}_{n-1,\varepsilon})-P_0^\pm(\tw^{+}_{n-1,\varepsilon})\\
&+P_0^\pm(\tw^{+}_{n-1,\varepsilon})
-P_0^\pm(\tw^{+}_{n-1,0}) \Big|,
\end{align*}
where we apply $P^+_\varepsilon$ or $P^-_\varepsilon$ and $P^+_0$ or $P^-_0$
depending on the sign of $\Pi_v(\tw^{+}_{n-1,\varepsilon})$ and
$\Pi_v(\tw^{+}_{n-1,0})$,
respectively.\\
Since $P_\varepsilon^\pm$ and $P^\pm_0$ are $\varepsilon$-close and 
$P^\pm_0$ are Lipschitz maps, there exist positive constants $c$,
$K_{P_0}$ and $c_0$ such that, for $n=1$,
\begin{align*}
\left| \tz_{1,\varepsilon}^+-\tz_{1,0}^+ \right|=\Big|
&P^\pm_\varepsilon(\tw^{+}_{0,\varepsilon})-P_0^\pm(\tw^{+}_{0,\varepsilon})+P_0^\pm(\tw^{+}_{0,\varepsilon})
-P_0^\pm(\tw^{+}_{0,0}) \Big|\\
&\le c\varepsilon+K_{P_0}\left| \tw^{+}_{0,\varepsilon}-\tw^{+}_{0,0} \right|=c\varepsilon+K_{P_0}c_0\varepsilon.
\end{align*}
By induction and assuming the general case $K_{P_0}>1$, we obtain
\begin{align*}
\left| \tz^{+}_{n,\varepsilon}-\tz^{+}_{n,0} \right|&=c\varepsilon
+K_{P_0}\left|\tw^{+}_{n-1,\varepsilon}-\tw^{+}_{n-1,0} \right|\\
&\le c\varepsilon+K_{P_0}\left( c\varepsilon+K_{P_0}\left|
\tw^{+}_{n-2,\varepsilon}-\tw^{+}_{n-2,0} \right| \right)\\
&\le c\varepsilon \sum_{i=0}^{n-1}\left( K_{P_0} \right)^i+\left( K_{P_0}
\right)^nc_0\varepsilon\\
&=c\varepsilon \frac{1-\left( K_{P_0} \right)^{n-1}}{1-K_{P_0}}+\left(
K_{P_0}
\right)^nc_0\varepsilon\\
&\le M\left( K_{P_0} \right)^n\varepsilon,
\end{align*}
for some $M>0$.\\
Arguing similarly for the differences $b_n-a_n$ and $b_{n+1}-a_{n+1}$ we get
that
\begin{equation*}
\Delta(t)\le M\left( K_{P_0} \right)^n\varepsilon
+\int_{b_n}^{a_{n+1}}K \Delta(r) dr,
\end{equation*}
with $M$ properly redefined.\\
We now apply the Gronwall's inequality to this expression. Noting that
\begin{equation*}
a_{n+1}-b_n=K_2+nK_3\varepsilon,
\end{equation*}
with $K_2=\max(\alpha^+(v),\alpha^-(-v))$ and $K_3>0$, this finally gives us
\begin{align*}
\Delta(t)& \le  M\left( K_{P_0} \right)^n\varepsilon
e^{K_2+nK_3\varepsilon}\\
&=M\varepsilon e^{K_2+n(K_3\varepsilon+\ln K_{P_0})}\\
&<M\varepsilon e^{K_2+n\ln K_4},
\end{align*}
for some positive $K_4$.\\
Taking $n=c_2\ln\frac{1}{\varepsilon}$ and making $c_2$ small enough such that
$c_2\ln K_4<1$, which is independent from $\varepsilon$, we finally have
\begin{align*}
\left| \tp(t;(0,\tw^+);\varepsilon)-\tp(t;(0,\tw^+_0);0)\right|
&\le M e^{K_2}
\varepsilon \left( \frac{1}{\varepsilon} \right)^{c_2\ln K_4}\\
&\le Me^{K_2} \varepsilon^\rho,
\end{align*}
for some $\rho>0$, which is what we wanted to prove.
\end{proof}

\section{Example: two linked rocking blocks}\label{sec:example}
In this section we apply some of our results to a mechanical example consisting
of two rocking blocks coupled by means of a spring (see
figure~\ref{fig:two_blocks}). The single block model was first introduced
in~\cite{Hou63}; further details of its dynamics can be found
in~\cite{YimChoPen80,SpaKoh84,Hog89,GraHogSea12}.

\begin{figure}
\begin{center}
\includegraphics[width=1\textwidth]
{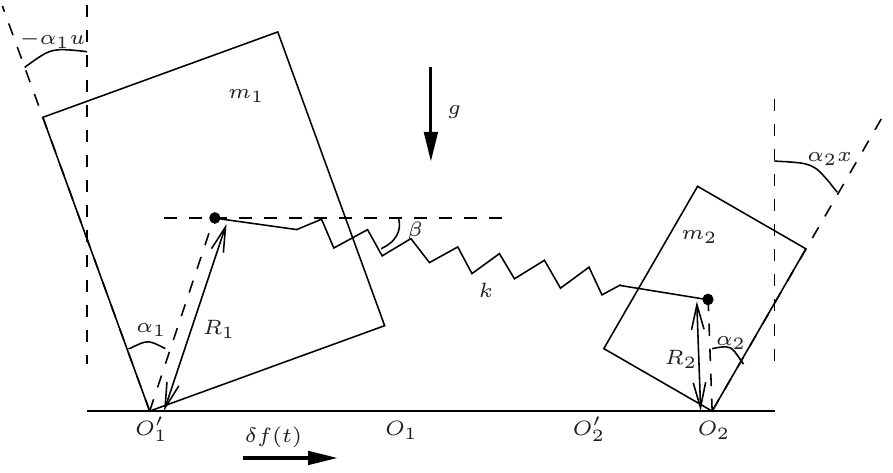}
\end{center}
\caption{Two rocking blocks linked by a spring.}
\label{fig:two_blocks}
\end{figure}
Both blocks are rigid, of mass $m_i$ and with semi--diagonal of length $R_i$. They are connected by a light spring, with spring constant $k$. The base is assumed to be sufficiently flat so that block $i$ rotates only about points $O_i, O^{'}_i$. On impact with the rigid base, neither block loses energy. Let $\alpha_i$ be the angle formed by the lateral side and the diagonal of each
block. We then take as state variables $u$ and $x$ such that $\alpha_1 u$ and
$\alpha_2 x$ are the angles formed by the vertical and the lateral side of
each block. When there is rotation, $u$ is positive (negative) for rotation about $O_1$ ($O^{'}_1$) and $x$ is positive (negative) for rotation about $O_2$ ($O^{'}_2$). The spring makes an angle $\beta$ with the horizontal. As shown in~\cite{Hog89}, when both blocks are slender ($\alpha_i<<1$), the dynamics of each is modeled by the piecewise Hamiltonian
systems
\begin{equation*}
U(u,v)=\left\{
\begin{aligned}
\frac{v^2}{2}-\frac{u^2}{2}+u,&\text{ if }u\ge0\\
\frac{v^2}{2}-\frac{u^2}{2}-u,&\text{ if }u<0
\end{aligned}\right.
\end{equation*}
and
\begin{equation*}
X(x,y)=\left\{
\begin{aligned}
\frac{y^2}{2}-\frac{x^2}{2}+x,&\text{ if }x\ge0\\
\frac{y^2}{2}-\frac{x^2}{2}-x,&\text{ if }x<0.
\end{aligned}\right.
\end{equation*}
Each system has two critical points at $Q^\pm=(\pm1,0)$, and there are two
heteroclinic connections $\gamma^\text{up/down}$ between them, given by the
energy level $U(u,v)=\frac{1}{2}$ and $X(x,y)=\frac{1}{2}$,
\begin{equation*}
\gamma^\text{up/down}=\left\{ (x,y)=\sigma^{\text{up/down}}(\xi),\xi\in\RR \right\},
\end{equation*}
where
\begin{equation*}
\sigma^{\text{up/down}}(\xi)=\left\{
\begin{aligned}
&(\pmm(1-e^{-\xi}),\pmm e^{-\xi})&&\text{if }\xi\ge0\\
&(\pmm(e^{\xi}-1),\pmm e^{\xi})&&\text{if }\xi<0.
\end{aligned}
\right\}
\end{equation*}
These heteroclinic connections surround a region filled with a continuum of
period orbits, which are given by $U(u,v)=c$ and $X(x,y)=c$, with
$0<c<\frac{1}{2}$, and
\begin{equation*}
\phi_{\U}(\tau;0,v)=\left\{
\begin{aligned}
&\bigg(\frac{v-1}{2}e^\tau-\frac{v+1}{2}e^{-\tau}+1,\\
&\quad\frac{v-1}{2}e^\tau+\frac{v+1}{2}e^{-\tau}\bigg)&&
\text{if }0\le\tau\le\alpha^+(v)\\
&\bigg(-\frac{v-1}{2}e^{\tau-\alpha^+(v)}+\frac{v+1}{2}e^{-\tau+\alpha^+(v)}-1,\\
&\quad-\frac{v-1}{2}e^{\tau-\alpha^+(v)}-
\frac{v+1}{2}e^{-\tau+\alpha^+(v)}\bigg)&&
\text{if }\tau\alpha^+(v)\le\tau\le\alpha(v),
\end{aligned}
\right.
\end{equation*}
with $\alpha^+(v)$ and $\alpha(v)$ given by
\begin{align*}
\alpha^+(v)&=2\int_0^{1-\sqrt{1-v^2}}\frac{1}{\sqrt{v^2+u^2-2u}}du=\\
&=2\ln\left(\frac{1+u}{1-u}\right)\\
\alpha(v)&=2\alpha^+(v),
\end{align*}
(similarly for the Hamiltonian $X$). Hence conditions C.1--C.4 of
\S\ref{sec:system_description_scattering} are satisfied.

We now assume that both blocks are identical ($\alpha_1=\alpha_2$, $R_1=R_2, m_1=m_2$).
This allows us to assume that the angle formed by the spring and the horizontal
is small, and hence to linearize the coupling around $\beta=0$.  When the blocks are subject to an external  small $T$-periodic forcing given by $\delta f(t)$, the
(linearized) equations that govern the system in the extended phase space are
\begin{equation}
\begin{aligned}
\dot{u}=&v\\
\dot{v}=&u-\sgn(u)\\
&+k(x-u)-\delta f(s)\\
\dot{x}=&y\\
\dot{y}=&x-\sgn(x)\\
&+k(u-x)-\delta f(s)\\
\dot{s}=&1,
\end{aligned}
\label{eq:linked_blocks}
\end{equation}
Introducing the perturbation parameter $\varepsilon$ through the
reparameterization
\begin{equation*}
\delta=\te \varepsilon,\quad k=\tk\varepsilon,
\end{equation*}
with $\te$ and $\tk$ both positive constants, and taking $f(s)=\cos(\omega s)$
(\cite{Hog89}), these equations can be written in
terms of a piecewise-smooth Hamiltonian of the form
\begin{equation}
H_{\varepsilon}(u,v,x,y,s)=U(u,v)+X(x,y)+\varepsilon h(u,x,s)
\label{eq:hamiltonian_perturbed_linked_blocks2}
\end{equation}
where $h$ is the Hamiltonian perturbation
\begin{equation}
h(u,x,s)=
\te( u+x) \cos(\omega s)+\tk\big(\frac{u^2}{2}+ \frac{x^2}{2}-ux\big).
\label{eq:hamiltonian_perturbation2}
\end{equation}
The objects given by the critical points and heteroclinic connections of the
Hamiltonian $X$, on one hand, and the periodic orbits of the Hamiltonian $U$, on
the  other one, give rise to the manifolds
\begin{equation*}
\tL^\pm=\left\{ (\phi_\U(\theta\alpha(v);0,v),\pm1,0,s)\in
\RR^4\times\T,\,\sqrt{2c_1}\le v\le \sqrt{2c_2},\,0\le \theta\le 1
\right\},
\end{equation*}
$0<c_1,c_2<\frac{1}{2}$, that are invariant for the coupled system when
$\varepsilon=0$ and have $4$-dimensional heteroclinic manifolds $\tg=
W^s(\tL^+)=W^u(\tL^-)$ and
$\tilde{\gamma}^{\text{down}}=W^u(\tL^+)=W^s(\tL^-)$.

As stated in Proposition~\ref{prop:persistence_Lambda}, the invariant manifolds
$\tL^\pm$ persist when $\varepsilon>0$ is small enough.
Moreover, as shown in Proposition~\ref{prop:heteroclinic_intersection},
the Melnikov function~\eqref{eq:Melnikov_def} provides the first order term in
$\varepsilon$ of the distance of between the unstable and stable manifolds of
$\tL_\varepsilon^-$ and $\tL_\varepsilon^+$, respectively. For
system~\eqref{eq:hamiltonian_perturbed_linked_blocks2} this
becomes
\begin{equation}
M(\zeta,\theta,v,s):=\int_{-\infty}^\infty\left(
-y(t)(\te\cos(\omega (s+\zeta))+\tk(x(t)-u(t)))
\right)dt,
\label{eq:melnikov_scattering_example}
\end{equation}
where $(x(t),y(t))=\su(t)$ and
$u(t)=\Pi_u(\phi_\U(\theta\alpha(v)+t+\zeta;0,v))$.\\
More precisely, $M(\zeta,\theta,v,s)$ computes the first order distance between
the points $\tz^u$ and $\tz^s$, given, respectively, by intersection between
$W^u(\tL_\varepsilon^-)$ and $W^s(\tL_\varepsilon^+)$ with the line
\begin{equation*}
\tilde{N}=\left\{\tz_0+l
(0,0,0,1,0),\,l\in\RR  \right\}\subset
\RR^2\times\Sigma\times\T,
\end{equation*}
where $\tz_0$ belongs to the intersection of $W^u(\tL_0^-)=W^s(\tL_0^+)$ with
$\left\{ x=0 \right\}$ for $\varepsilon=0$, and is parametrized by
\begin{equation*}
\tz_0=\tz_0( \zeta,\theta,v,s):=\left(
\phi_{\U}\left( \theta\alpha(v)+\zeta;0,v\right),0,1,s+\zeta\right).
\end{equation*}
\begin{figure}
\begin{center}
\includegraphics[width=0.4\textwidth,angle=-90]{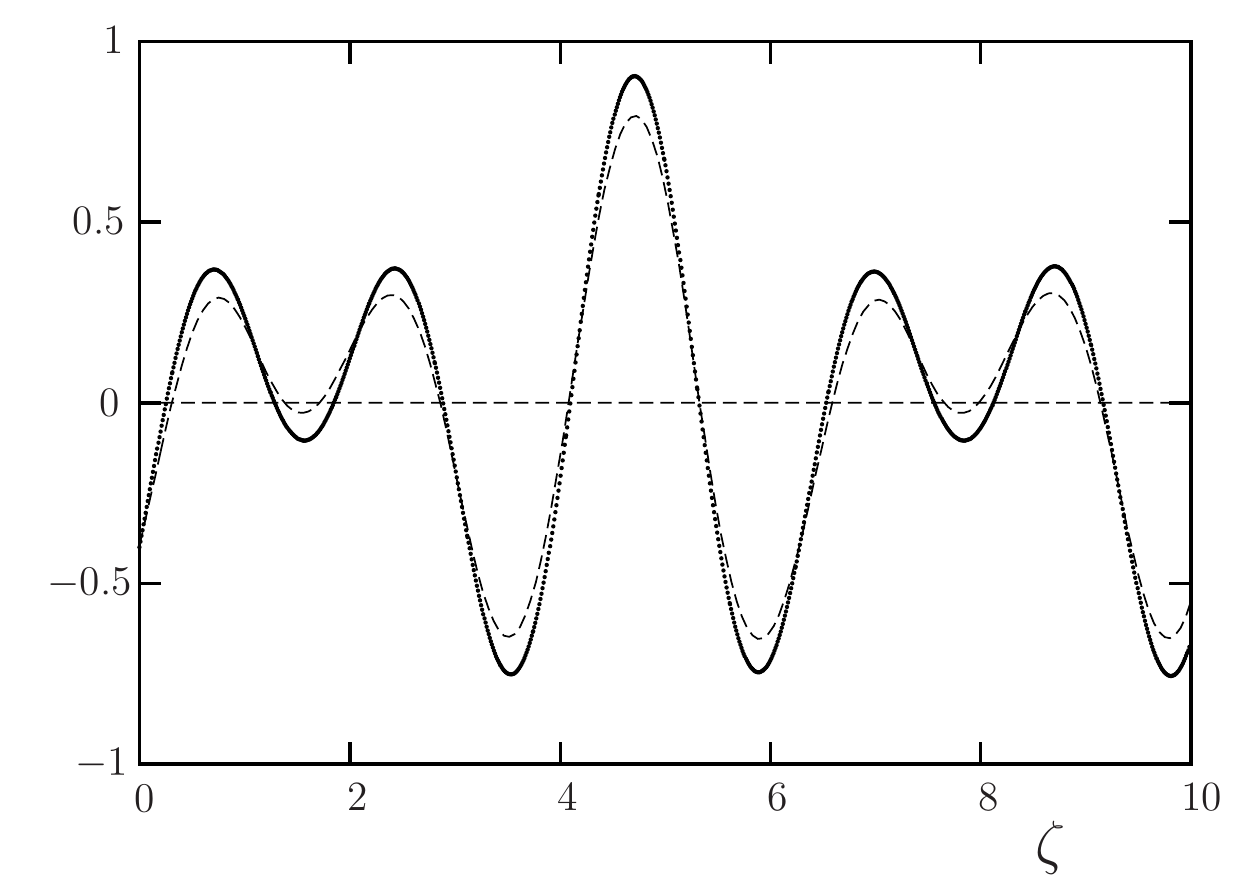}
\end{center}
\caption{Melnikov function and real distance (dashed) for $v=0.48$, $\theta=s=0$
and $\varepsilon=0.01$. The real distance has been magnified by a factor of
$\frac{1}{\varepsilon}$ to be compared with the Melnikov function.}
\label{fig:melnikov_v0d48_tau0}
\end{figure}
In figure~\ref{fig:melnikov_v0d48_tau0} we provide both the Melnikov function and
the real distance between $\tz^u$ and $\tz^s$ for $\varepsilon=0.01$, $v=0.48$,
$\theta=s=0$ when varying $\zeta$. This real distance is computed as follows.
Having fixed $(\theta,v,s)$ and $\varepsilon$,  for every $\zeta$ we numerically
find the $y$ coordinates ($y^u$ and $y^s$) of $\tz^u$ and $\tz^s$ and subtract
them. To compute $y^s$, we take an $\varepsilon$-neighbourhood of $y=1$ (where
the unperturbed manifold intersects $x=0$) which we assume contains $y^s$, and use a Bolzano-like method. 
We consider a set of initial conditions
\begin{equation*}
\left( \phi_\U(\theta\alpha(v)+\zeta;0,v),0,y_i,s+\zeta \right),
\end{equation*}
with $y_i\in\left( 1-O(\varepsilon),1+O(\varepsilon) \right)$, and integrate the
flow forwards in time for each of them. As the stable manifold
$W^s(\tL_\varepsilon^*)$ is $4$-dimensional, it separates the space into two
pieces and hence, if $\varepsilon>0$ is small enough,
trajectories either escape to infinity and or return to the section $x=0$.
This gives us $y_i$ and $y_{i+1}$ where $y_i$ is the largest value such that the
trajectory returns to $x=0$ and $y_{i+1}$ is the smallest value such that its
trajectory escapes to infinity. Hence $y^s\in(y_i,y_{i+1})$ and we proceed again
with this smaller interval. This is repeated until some desired tolerance is achieved.

The integration of the flow was done using an adaptative high order Runge Kutta
method (RKF78) with multiple precision libraries. The number of initial
conditions taken along the current interval at each iteration was $50$, and
their trajectories were launched in parallel. This allowed us to compute $y^s$
with a tolerance of $10^{-27}$ (length of the last interval) within a reasonable
time. We proceeded similarly for $y^u$, integrating backwards in time, also in
parallel. As can be seen in figure~\ref{fig:melnikov_v0d48_tau0}, the real
distance agrees very well with the value given by $M(\zeta,\theta,v,s)$
multiplied by $\varepsilon$.

Note that both the integration of the system and the computation of the
Melnikov function have been done numerically. We have used neither
the linearity nor the symmetry of the system, apart from the explicit
expressions for $\alpha(v)$, $\phi_\U$ and $\su(t)$, which could easily have been
computed numerically. Thus, the same techniques could easily be applied
to the full nonlinear equations.

As shown in Proposition~\ref{prop:heteroclinic_intersection}, of special
interest are the zeros of the Melnikov function, which lead to zeros of the real
distance between $\tz^u$ and $\tz^s$ and, hence, to heteroclinic connections. In
other words, for each simple zero $\bt$ there exists
$\zeta^*=\bt+O(\varepsilon)$ such that $\tz^s=\tz^u:=\tz^*$ and points $\tz^\pm$
satisfying\footnote{For convenience, we have slightly changed the notation with
respect to section~\ref{sec:scattering_map}. Points $\tz^*$ and $\tz^\pm$ here
correspond to the ones in Proposition~\ref{prop:heteroclinic_intersection}
flowed a time $\zeta^*$ by $\tp$.}
\begin{equation*}
\lim_{t\to\pm\infty}\left\vert\tp(t;\tz^*;\varepsilon)-\tp(t;\tz^\pm;\varepsilon)\right\vert=0.
\end{equation*}
These are of the form
\begin{align*}
\tz^\pm&=\left( \phi_\U(\theta \alpha(v)+\bt;0,v),\pm 1,0,s+\zeta
\right)+O(\varepsilon)\\
\tz^*&=\left( \phi_\U(\theta\alpha(v)+\bt;0,v),0,1,s+\zeta \right)+O(\varepsilon).
\end{align*}

The points $\tz^\pm$ may be located at different energy levels on the manifolds
$\tL_\varepsilon^\pm$. Their first order difference is provided in terms of the
unperturbed flows by (\ref{eq:melnikov-like_U}) in
Proposition~\ref{prop:first_order_scattering}. In addition, (\ref{eq:average_formula_heteroclinic}) of
Proposition~\ref{prop:first_order_scattering} provides an expression for
the first order difference between the average energy of the trajectories
$\tp(\pm t;\tz^\pm;\varepsilon)$ for $t\to\infty$.

If we compute expression~\eqref{eq:average_formula_heteroclinic} for the third and
fourth positive (in $\zeta$) zeros of the Melnikov function we obtain
\begin{equation}
<U(\tp(t;\tz^+;\varepsilon))>-<U\left( \tp\left( t;\tz^-;\varepsilon \right)
\right)>\simeq 0.4
\label{eq:average_third_zero}
\end{equation}
for the third zero, and
\begin{equation}
<U(\tp(t;\tz^+;\varepsilon))>-<U\left( \tp\left( t;\tz^-;\varepsilon \right)
\right)>\simeq -0.3
\label{eq:average_fourth_zero}
\end{equation}
for the fourth one. Note that a positive difference implies an increase of the energy
of the system while a negative one a decrease. Note the high dependence of
this difference on the choice of the zero.

We now compute numerically the third and fourth zeros of the real distance in
order to compute their associated heteroclinic connections and illustrate this
behaviour. This is done by using a Bolzano method starting in a
$\varepsilon$-neighbourhood of each zero. For each value of $\zeta$, we
calculate $\tz^u$ and $\tz^s$ as explained before and calculate their difference.
From the third step of the Bolzano method we use the previous computations to
obtain a prediction for the next interval in $y$ where to look for $y^s$
(similarly for $y^u$), which improves the method significantly. This is done
until the real zero is computed with a precision of $10^{-26}$. We find
\begin{align*}
\tz^*&=(
-0.11379311572593961969337806,\\
&0.12554935975439240524029269,\\
&0,1.11150143902429741752435119,\\
&1.71158269885731891700238123)
\end{align*}
for the third zero of the Melnikov function and
\begin{align*}
\tz^*&=(0.09636673455802005569868835,\\
&-0.21668659029422144991945461,\\
&0, 1.12033664434168488471504850,\\
&2.85947780778602337824850186
)
\end{align*}
for the fourth one. Their trajectories are shown in
figures~\ref{fig:trajectory_third_zero} and~\ref{fig:trajectory_fourth_zero}.
The initial condition $\tz^*$ belongs to the section $x=0$ and is used to
integrate the flow forwards and backwards. Note that, due to numerical errors,
the trajectory escapes after spiraling around the manifolds $\tL_\varepsilon^*$
and $\tL_\varepsilon^-$.
\begin{figure}
\begin{center}
\begin{picture}(1, 0.4)
\put(0,0.35){
\subfigure[]
{\includegraphics[angle=-90,width=.5\textwidth]
{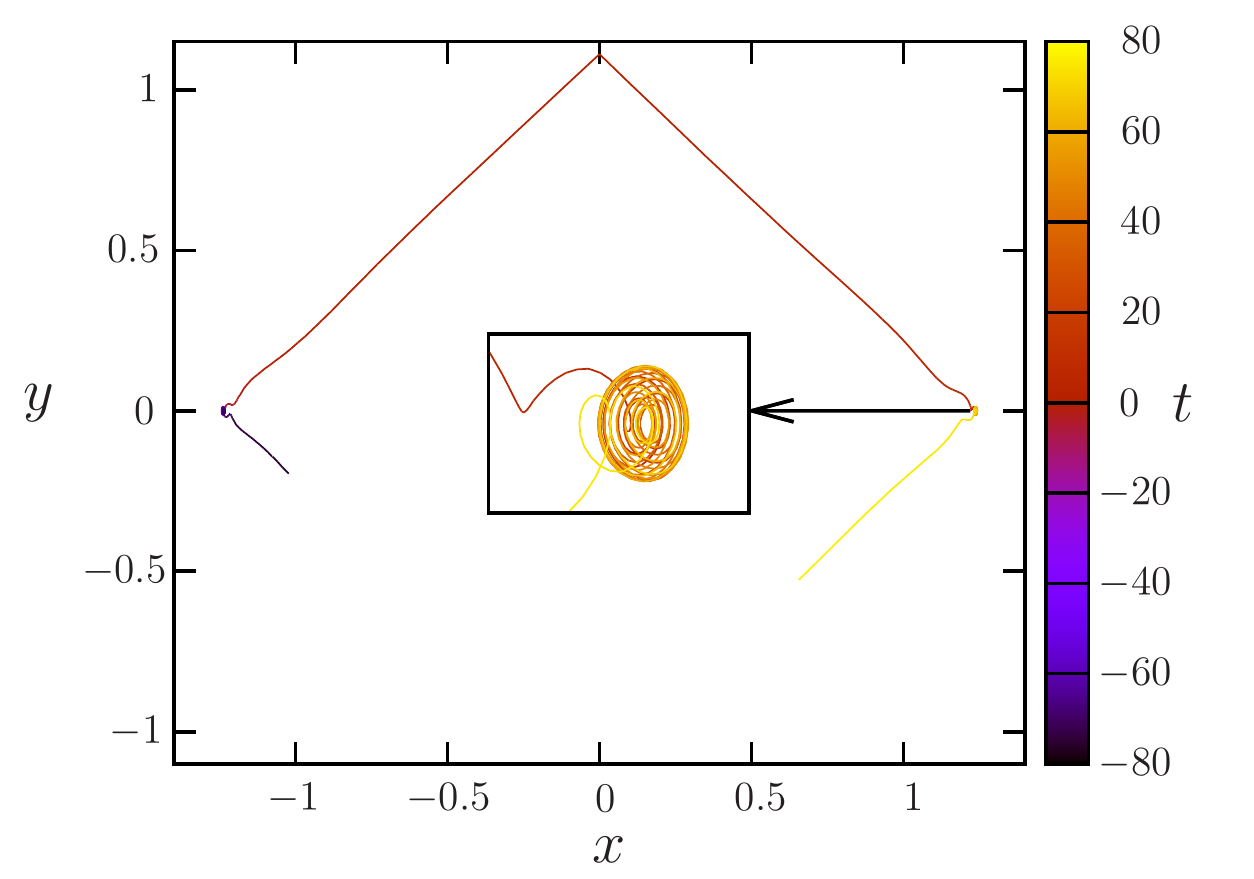}
}}
\put(0.5,0.35){
\subfigure[]
{\includegraphics[angle=-90,width=.5\textwidth]
	{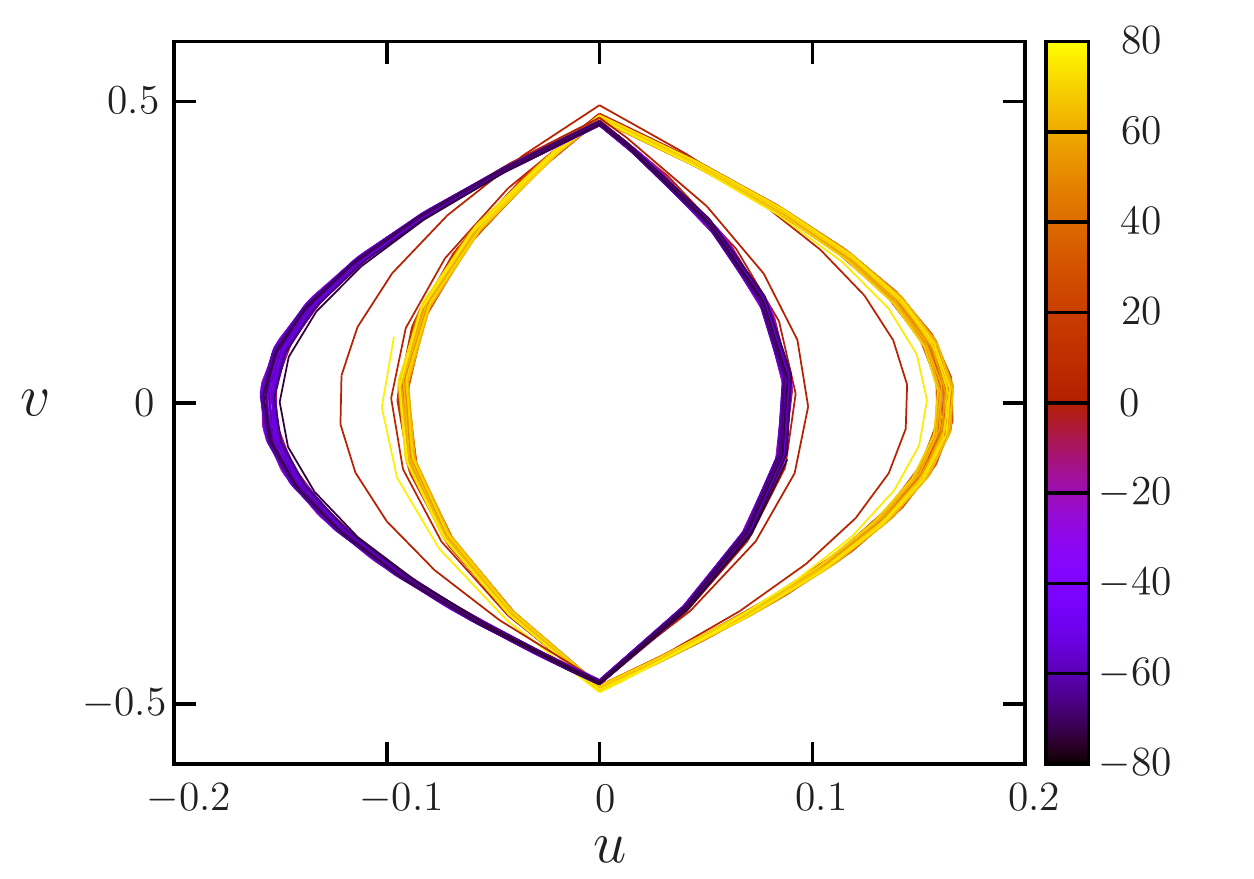}
}}
\end{picture}
\end{center}
\caption{Trajectory close the heteroclinic point $\tz^*$ obtained for the third
zero of the Melnikov function in figure~\ref{fig:melnikov_v0d48_tau0}.
Projections onto (a) the $(u,v)$ plane and (b) the $(x,y)$ plane. The colour bar denotes time.}
\label{fig:trajectory_third_zero}
\end{figure}

\begin{figure}
\begin{center}
\begin{picture}(1, 0.4)
\put(0,0.35){
\subfigure[]
{\includegraphics[angle=-90,width=.5\textwidth]
{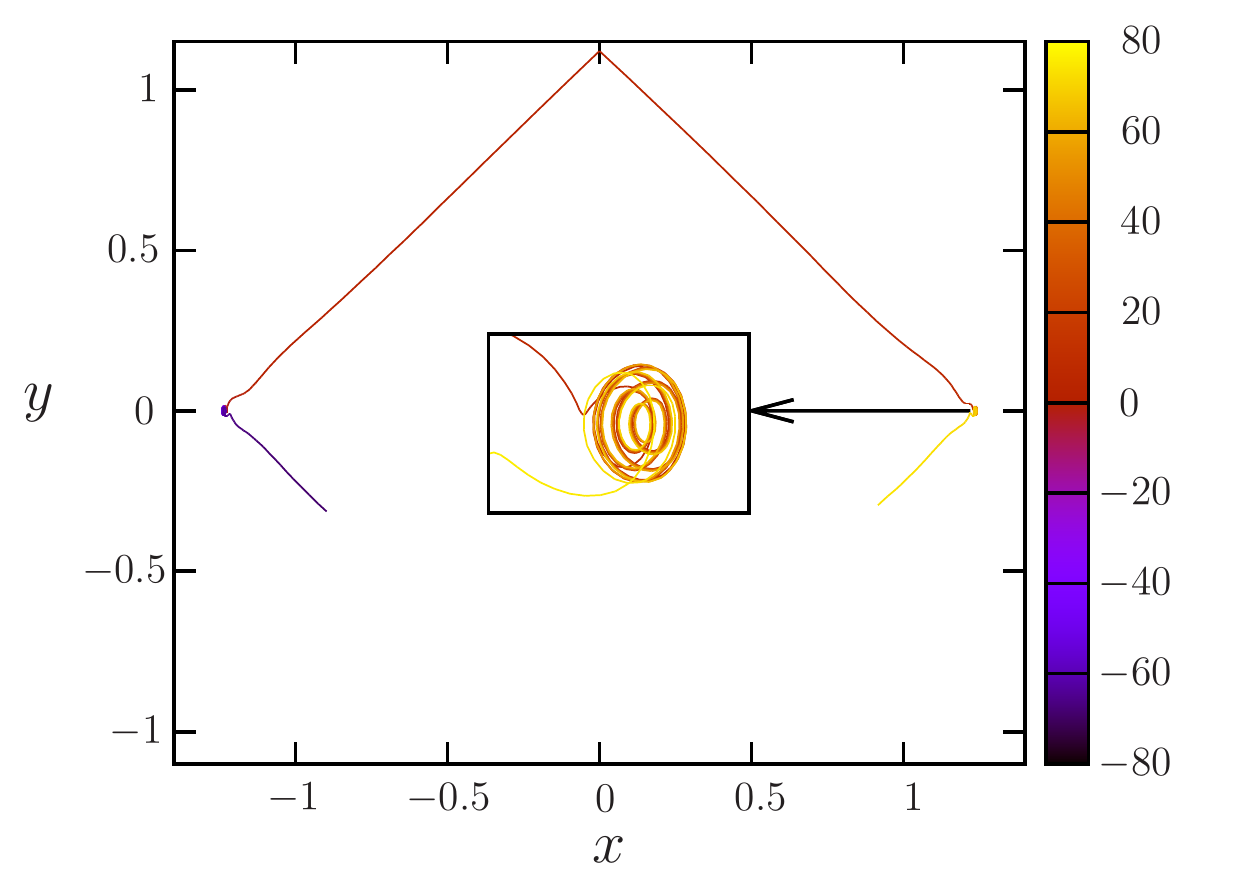}
}}
\put(0.5,0.35){
\subfigure[]
{\includegraphics[angle=-90,width=.5\textwidth]
{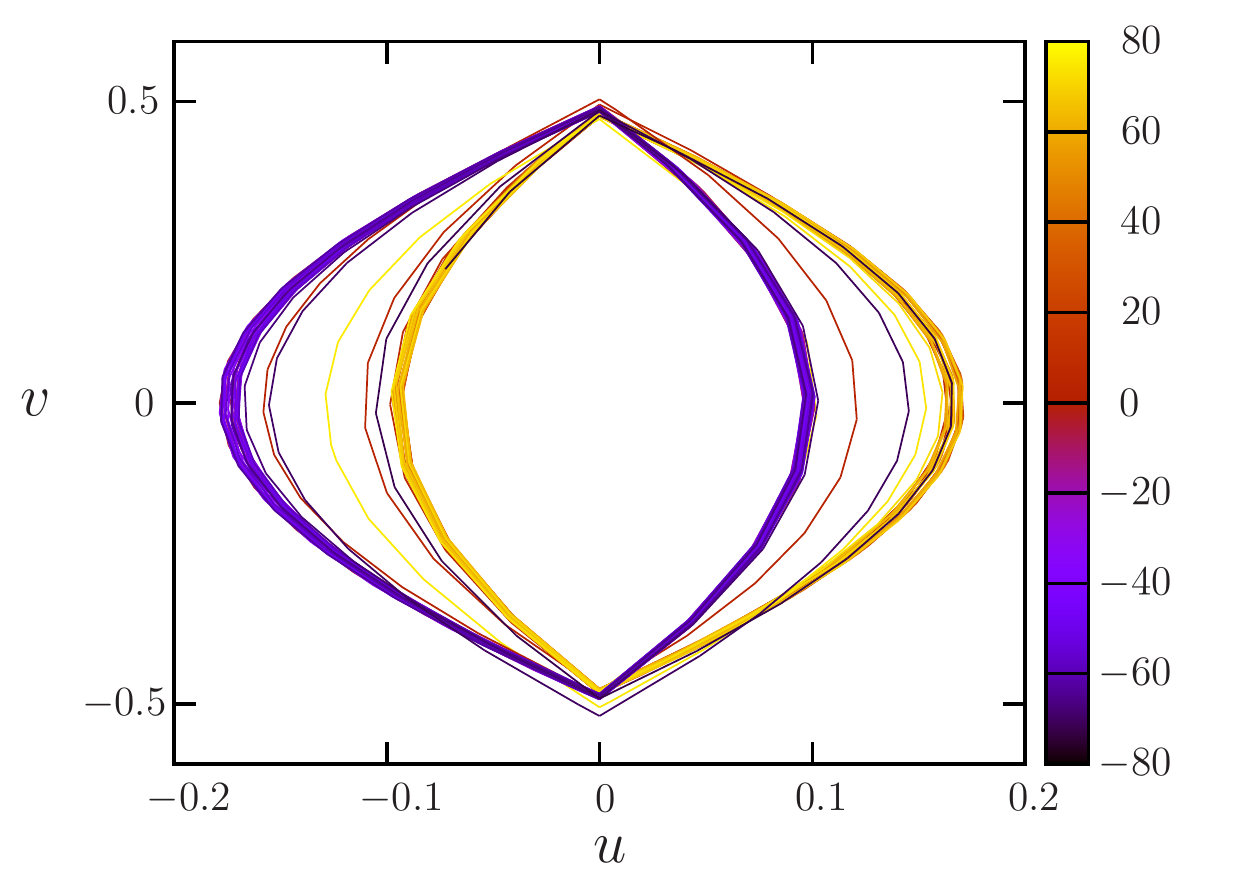}
}}
\end{picture}
\end{center}
\caption{Same as in figure~\ref{fig:trajectory_third_zero} for the fourth zero of
the Melnikov function in figure~\ref{fig:melnikov_v0d48_tau0}.}
\label{fig:trajectory_fourth_zero}
\end{figure}

In order to validate~\eqref{eq:average_third_zero}
and~\eqref{eq:average_fourth_zero}, we show in figure~\ref{fig:Uphi} the
Hamiltonian $U$ evaluated along the trajectories. Note that the
transition from $\tL_\varepsilon^-$ to $\tL_\varepsilon^+$ is very fast and
the trajectories spend most of the time close to the invariant manifolds until
they escape, both forwards and backwards in time.
In the same figure, we show the average functions
\begin{equation}
\frac{1}{t}\int_0^tU\left( \tp\left( r;\tz^*;\varepsilon \right) \right)dr.
\label{eq:average_function}
\end{equation}
The difference between the limiting values of the averages is shown magnified in
figure~\ref{fig:averages} for $t\to\infty$ and $t\to-\infty$. There is good agreement with the values
given in~\eqref{eq:average_third_zero} and~\eqref{eq:average_fourth_zero},
multiplied by $\varepsilon$.
\begin{figure}
\begin{center}
\begin{picture}(1, 0.4)
\put(0,0.35){
\subfigure[]
{\includegraphics[angle=-90,width=.5\textwidth]
{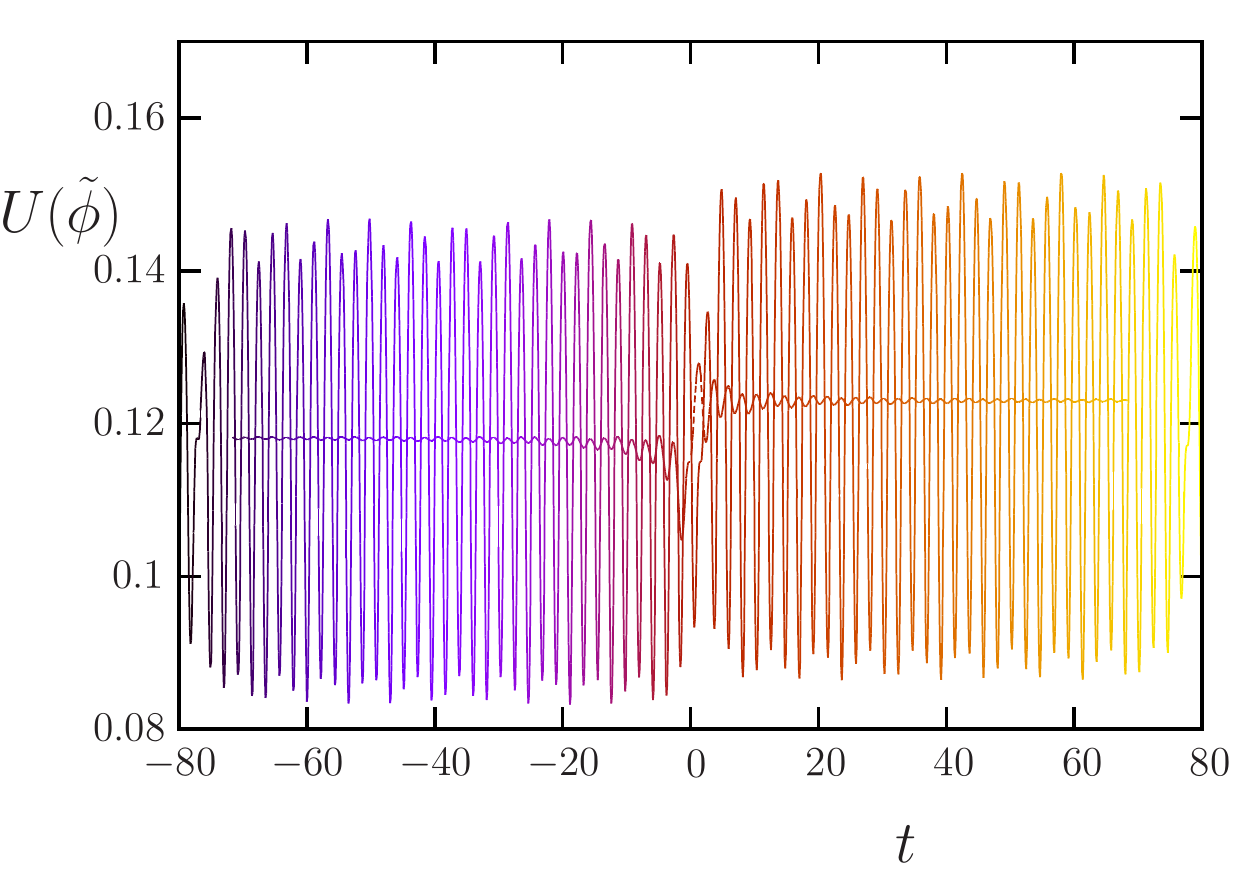}}
}
\put(0.5,0.35){
\subfigure[]
{\includegraphics[angle=-90,width=.5\textwidth]
{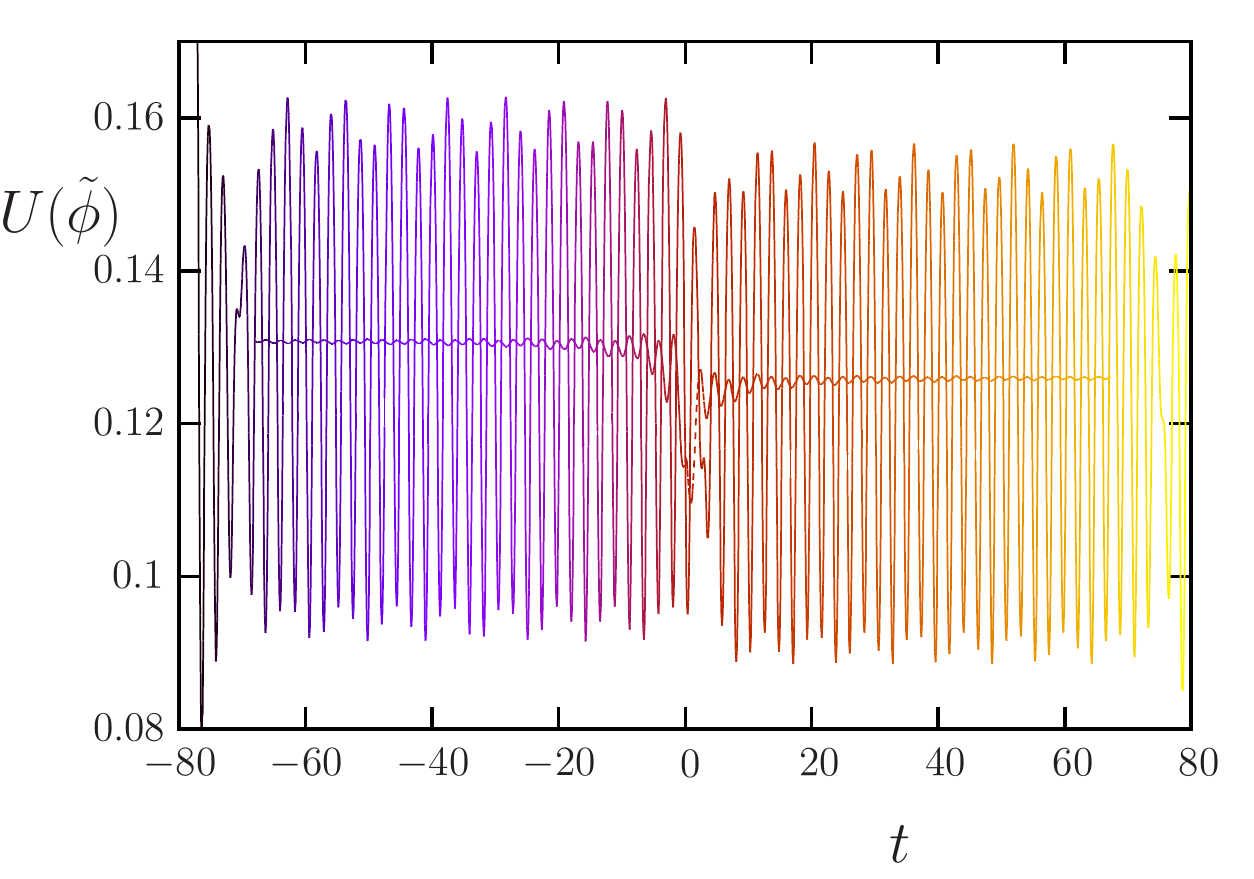}}
}
\end{picture}
\end{center}
\caption{Hamiltonian $U$ evaluated along the trajectory
$\tp(t;\tz^*;\varepsilon)$ for (a) the third and (b) the fourth zeros of the Melnikov
function in figure~\ref{fig:melnikov_v0d48_tau0}. The average
function~\eqref{eq:average_function} is also shown.}
\label{fig:Uphi}
\end{figure}

\begin{figure}
\begin{center}
\begin{picture}(1, 0.4)
\put(0,0.35){
\subfigure[]
{\includegraphics[angle=-90,width=.5\textwidth]
{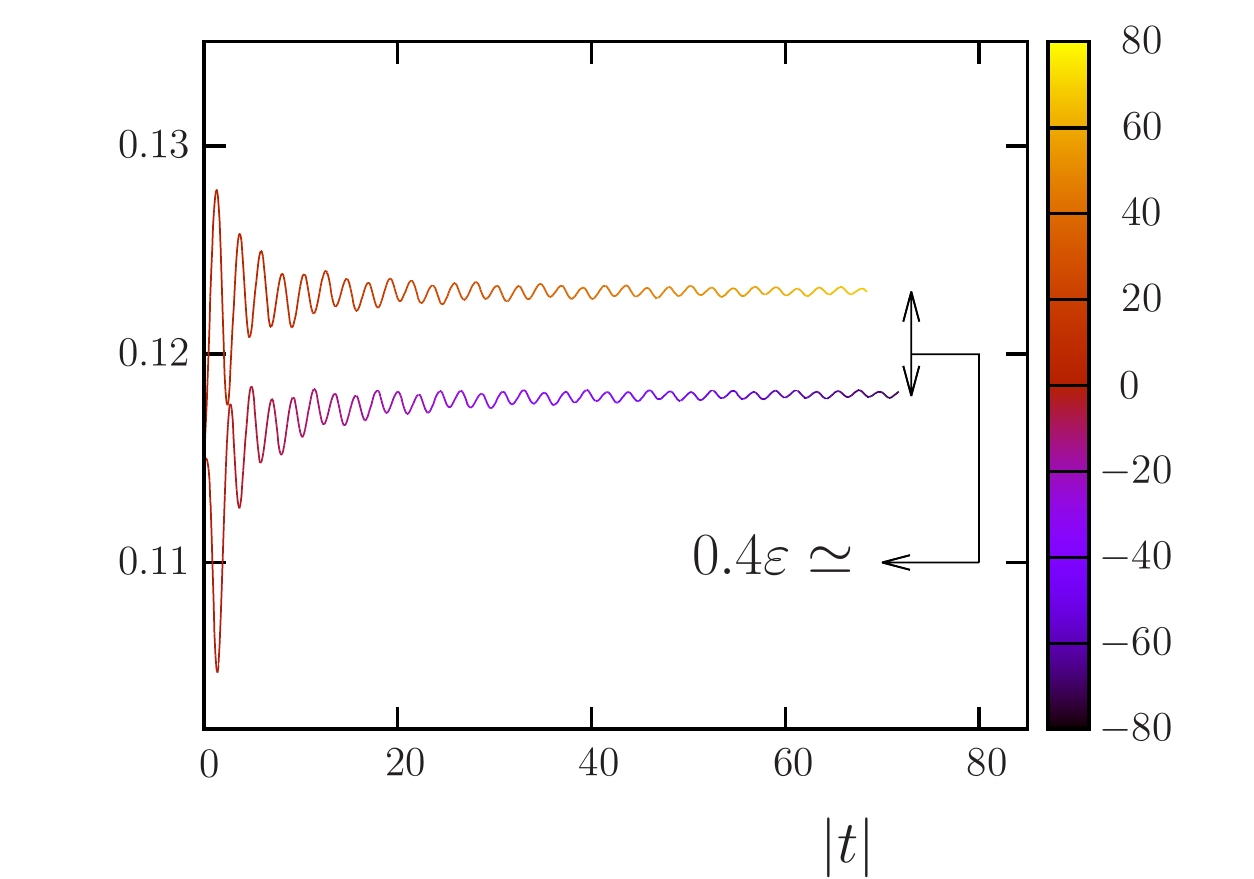}}
}
\put(0.5,0.35){
\subfigure[]
{\includegraphics[angle=-90,width=.5\textwidth]
{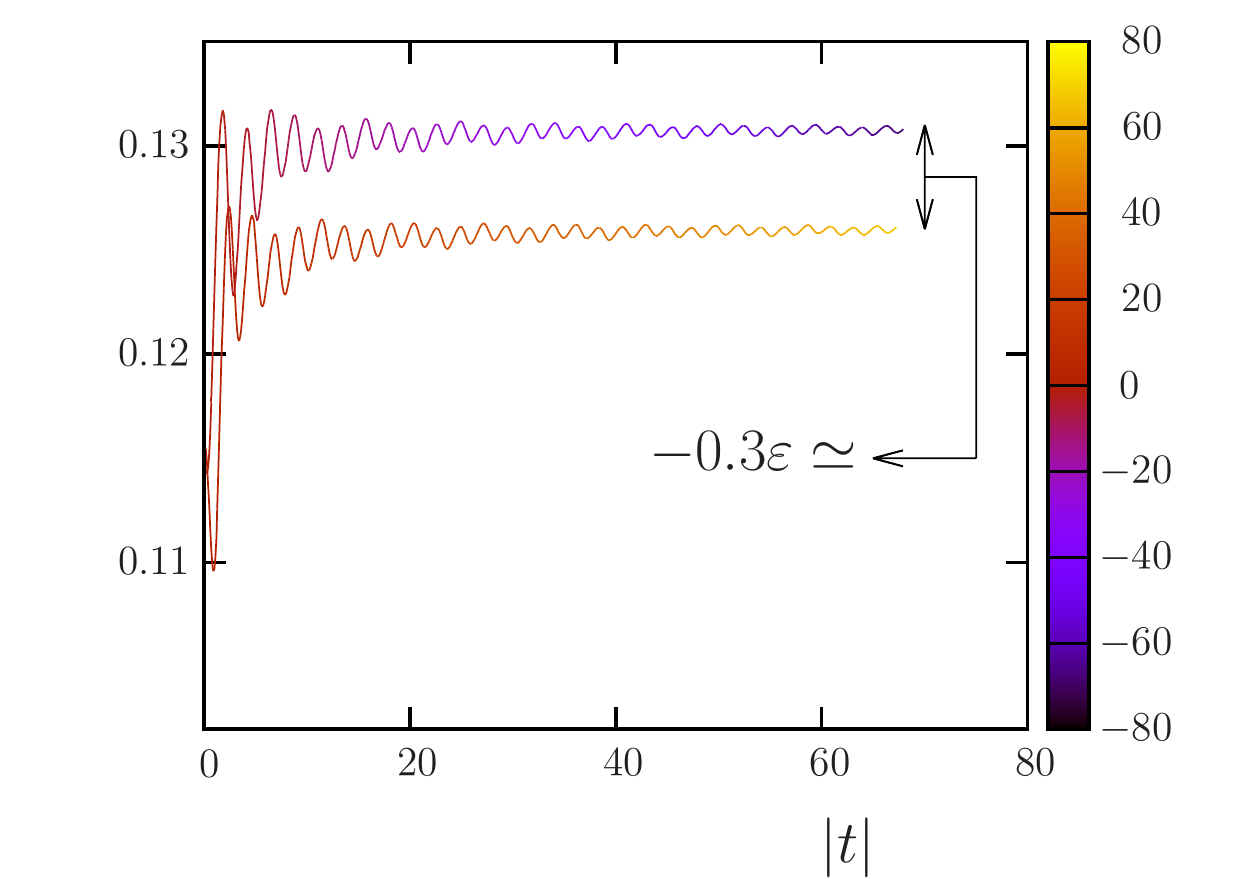}}
}
\end{picture}
\end{center}
\caption{Average function~\eqref{eq:average_function} for the trajectories shown
in figures~\ref{fig:trajectory_third_zero} and~\ref{fig:trajectory_fourth_zero}
(magnification of figure~\ref{fig:Uphi}). The colour bar denotes
$t$. Note that the horizontal axis denotes $|t|$, for better
comparison of the limiting values.}
\label{fig:averages}
\end{figure}
We now study the effect of varying $\theta$ whilst keeping $v$ and
$s$ constant. In figure~\ref{fig:averages_theta} we show the values (dotted)
of~\eqref{eq:average_formula_heteroclinic} for different values of
$\theta$, for the third and fourth zeros of the Melnikov function. In the same figure we
show the result of computing the heteroclinic point $\tz^*$ and proceeding as
before to compute the difference between the limiting averages of the asymptotic
dynamics. The agreement is good.
\begin{figure}
\begin{center}
\includegraphics[angle=-90,width=0.6\textwidth]
{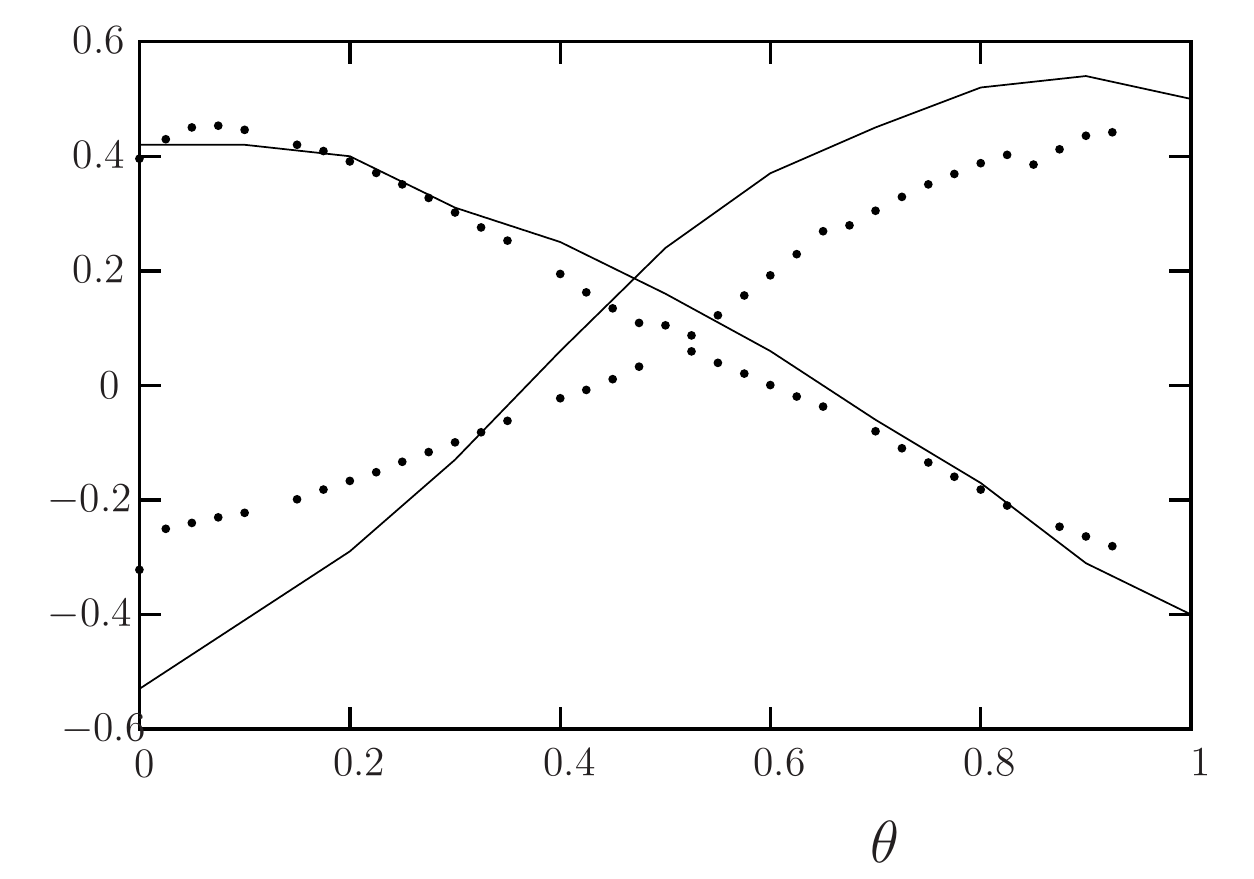}
\end{center}
\caption{Values (dotted) of~\eqref{eq:average_formula_heteroclinic} as a
function of $\theta$ for $v=0.48$ and $s=0$ for the third and fourth positive
zeros of the Melnikov function compared with the difference computed as in
figure~\ref{fig:averages} for $\varepsilon=0.01$ (lines).}
\label{fig:averages_theta}
\end{figure}

Finally, we study the first order difference given
in~\eqref{eq:average_formula_heteroclinic} when varying $v$ and $\theta$, whilst
keeping $s$ constant, for different zeros of the Melnikov function.  For each
$(\theta,v)$ we compute the Melnikov function, and for each zero we compute
expression~(\ref{eq:average_formula_heteroclinic}). The resulting values are
shown on the left of figures~\ref{fig:first_zeros}-\ref{fig:third_zeros} for the
first three positive zeros of the Melnikov function, which are shown on the
right of these figures. Note that, in figure~\ref{fig:first_zeros} (right),
there is a discontinuity curve (in black) corresponding to a relabelling of
zeros.  Positive values in the left-hand figures lead to an increase of energy
in one iteration of the scattering map $\Su$, while negative ones lead to a
decrease.  For the same coordinates $(\theta,v)$, different zeros of the
Melnikov function have different behaviours. When combining this with the same
study for the map $\Sd$ associated with the lower heteroclinic connection, these
results can be used to find suitable candidate trajectories exhibiting
diffusion.

\begin{figure}
\begin{center}
\unitlength=0.5\textwidth
\begin{picture}(0.9, 0.6)
\put(-0.6,0.6){
\includegraphics[angle=-90,width=.5\textwidth]
{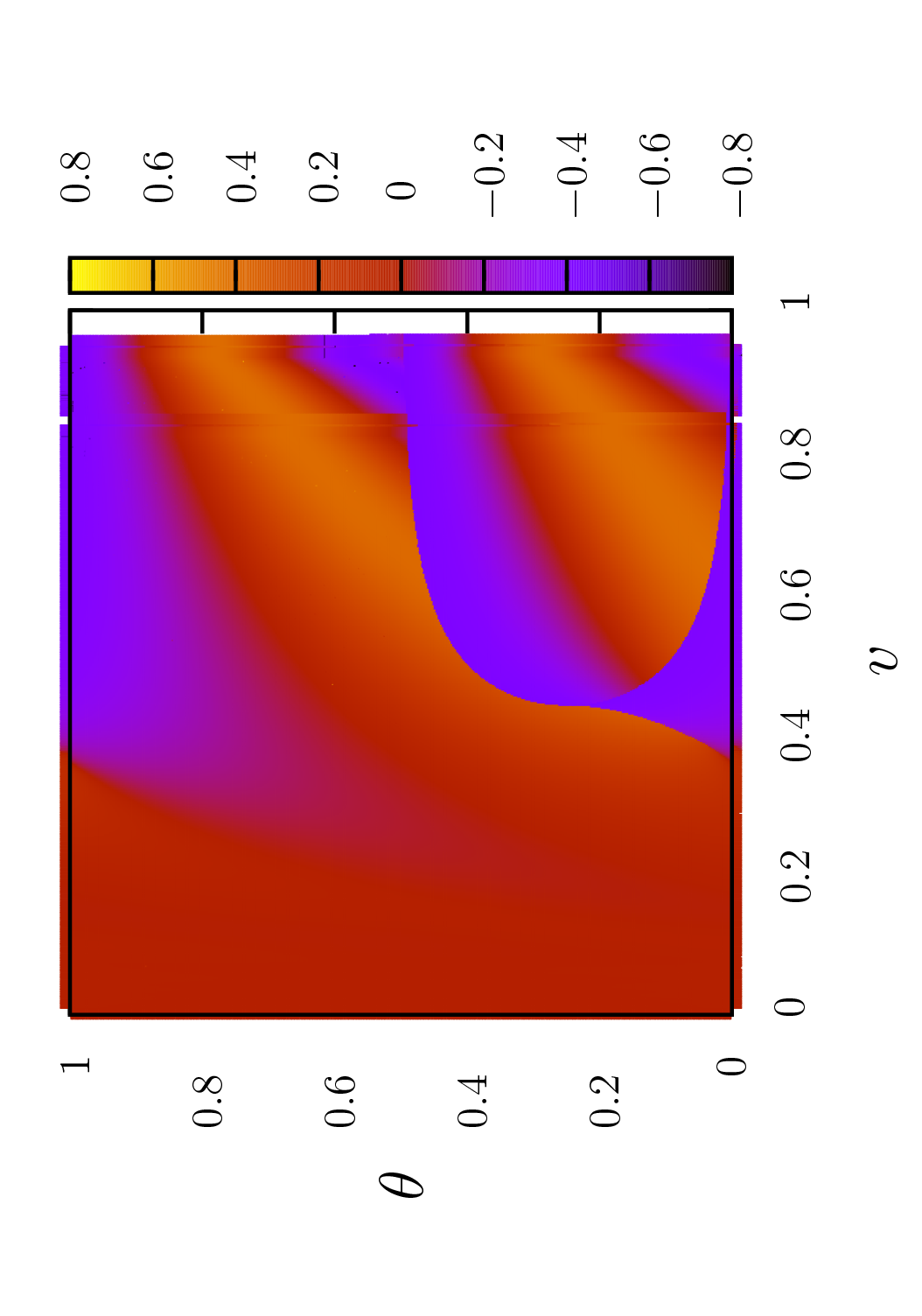}
}
\put(0.45,0.6){
\includegraphics[angle=-90,width=.5\textwidth]
{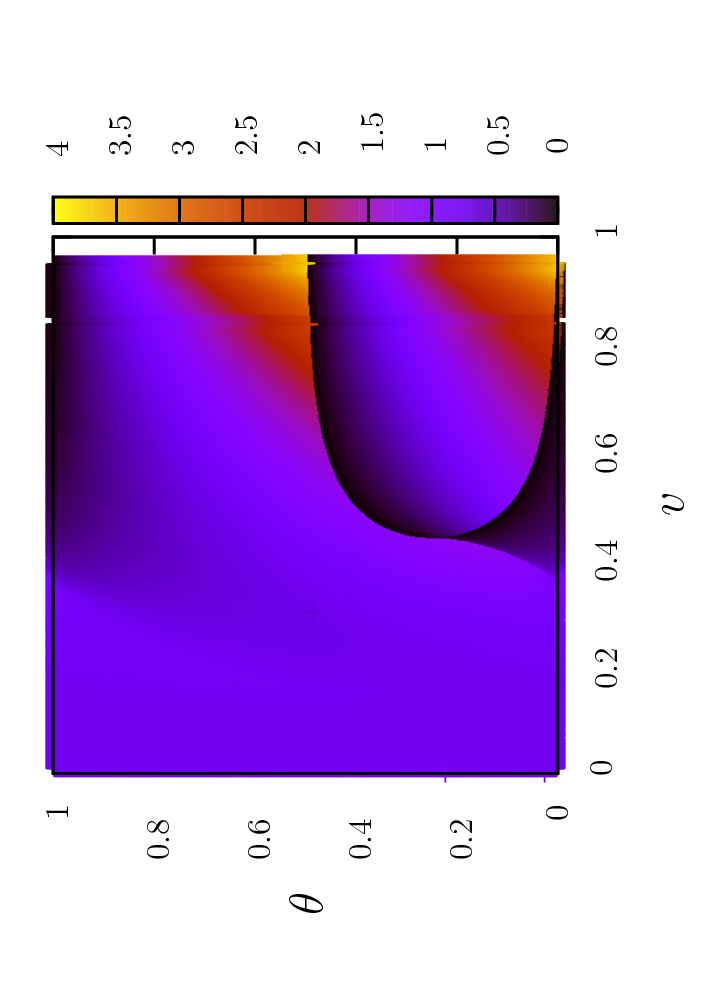}
}
\end{picture}
\end{center}
\caption{(left) First order difference between the average energy of the
trajectories $\tp(\pm t;\tz^\pm;\varepsilon)$ when $t\to\infty$ for the first
positive zero of the Melnikov function. (right) First positive zeros of the
Melnikov function.}
\label{fig:first_zeros}
\end{figure}

\begin{figure}
\begin{center}
\unitlength=0.5\textwidth
\begin{picture}(0.9, 0.6)
\put(-0.6,0.6){
\includegraphics[angle=-90,width=.5\textwidth]
{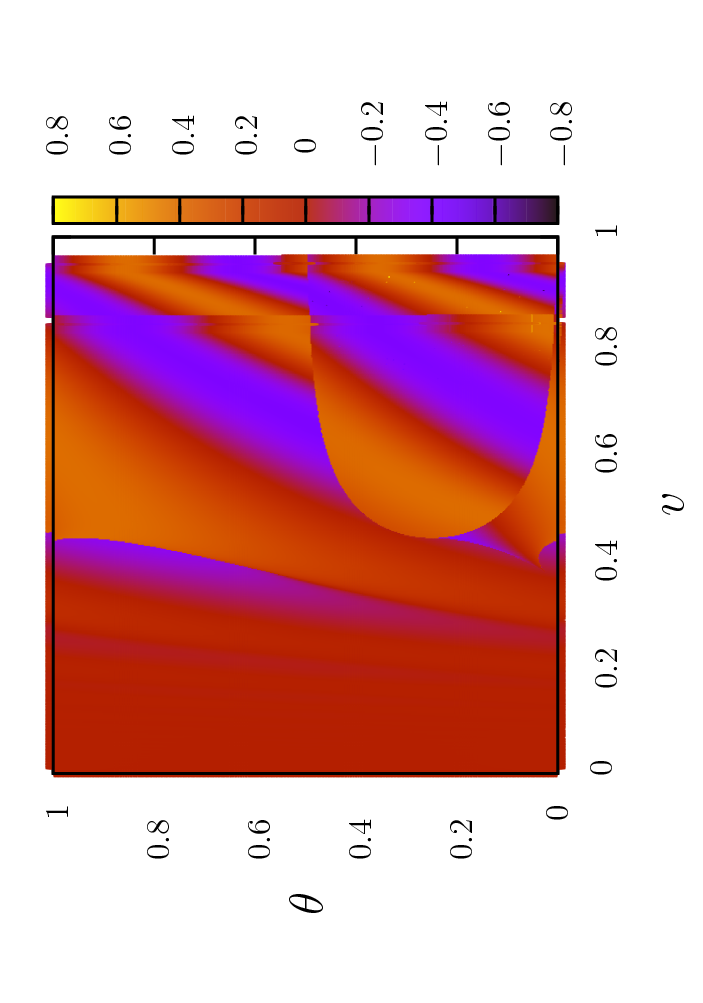}
}
\put(0.45,0.6){
\includegraphics[angle=-90,width=.5\textwidth]
{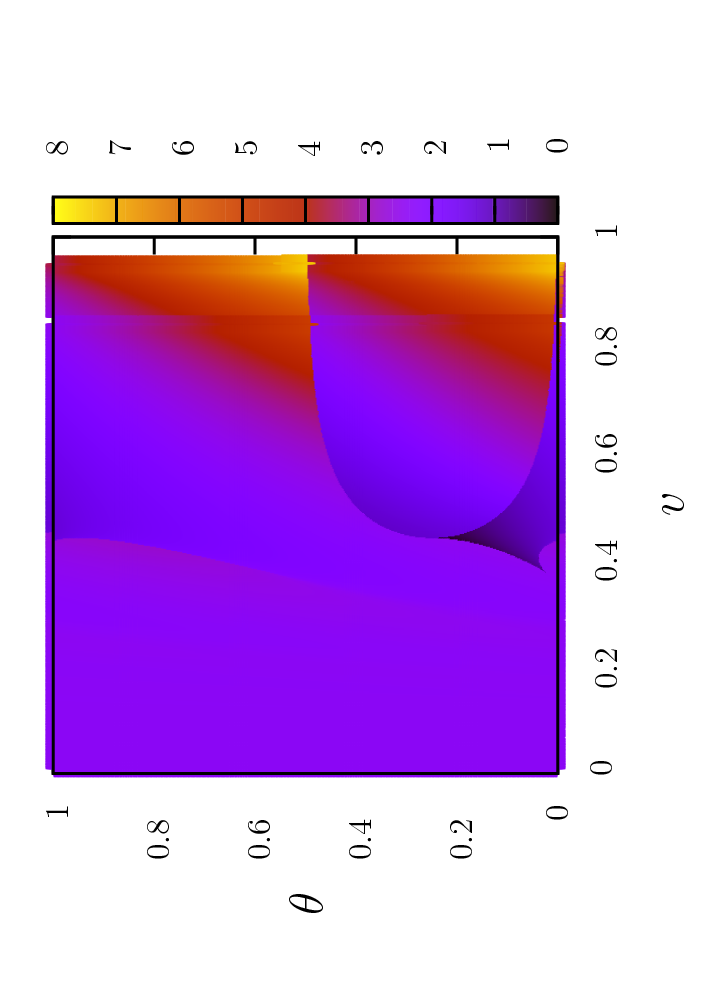}
}
\end{picture}
\end{center}
\caption{Same as figure~\ref{fig:first_zeros} for the second zero of the Melnikov
function.}
\label{fig:second_zeros}
\end{figure}

\begin{figure}
\begin{center}
\unitlength=0.5\textwidth
\begin{picture}(0.9, 0.6)
\put(-0.6,0.6){
\includegraphics[angle=-90,width=.5\textwidth]
{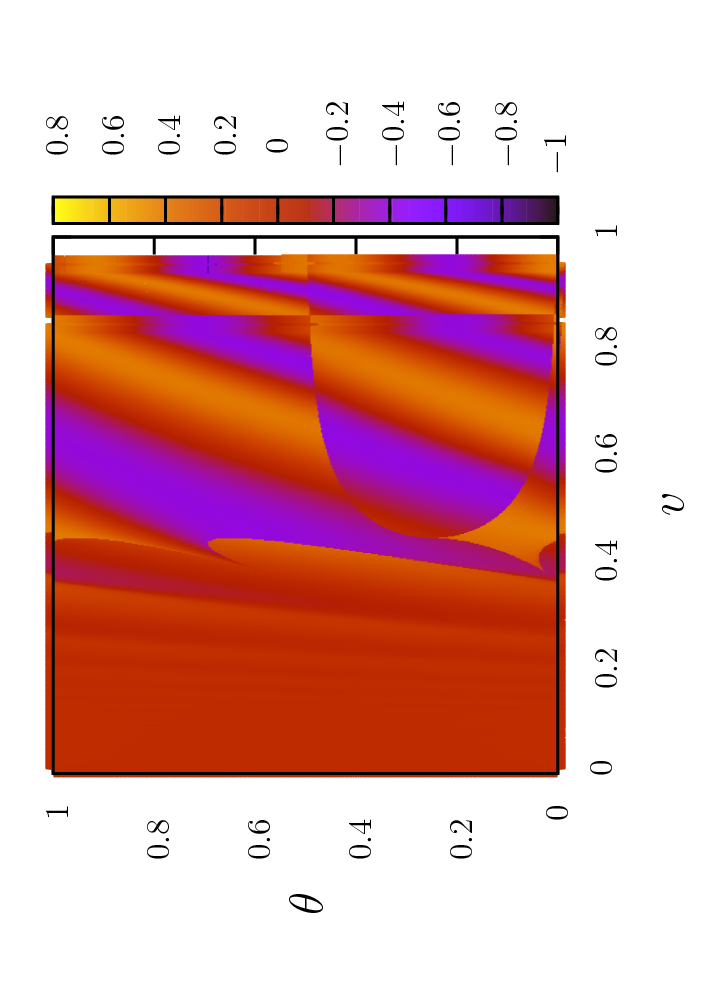}
}
\put(0.45,0.6){
\includegraphics[angle=-90,width=.5\textwidth]
{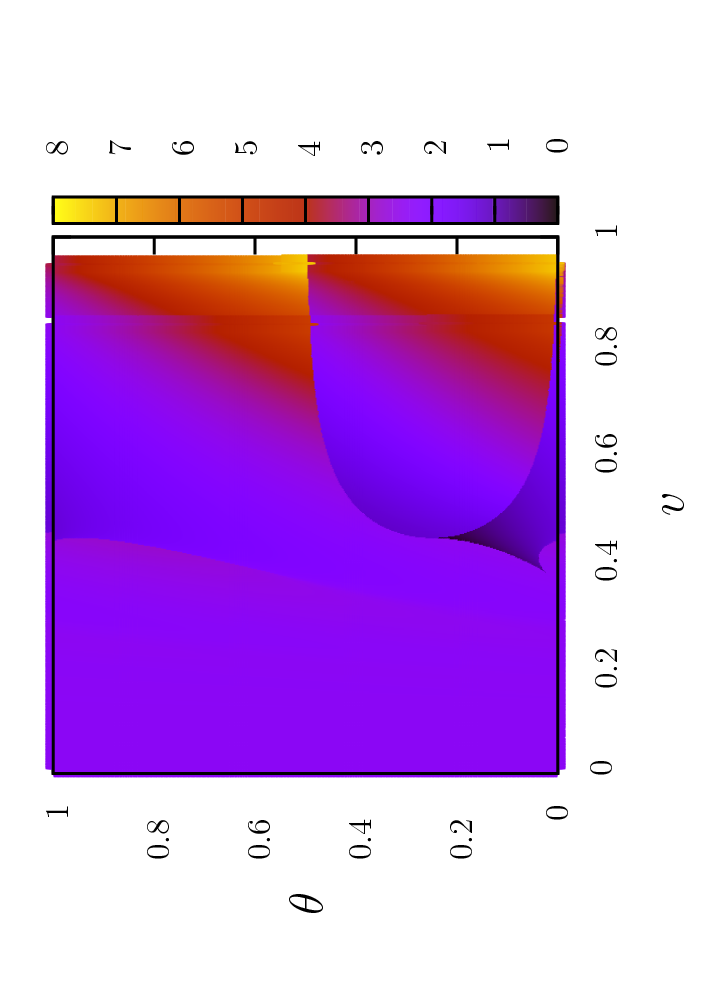}
}
\end{picture}
\end{center}
\caption{Same as figure~\ref{fig:first_zeros} for the third zero of the Melnikov
function.}
\label{fig:third_zeros}
\end{figure}

\section{Conclusions}\label{sec:conclusions}
We have considered a non-autonomous dynamical system formed by coupling
two piecewise-smooth systems in $\RR^2$ through a non-autonomous periodic
perturbation, leading to a two and a half degrees of
freedom piecewise-smooth Hamiltonian system with two switching manifolds.\\
We have studied the dynamics around one of the heteroclinic orbits of one of the piecewise-smooth systems, which is captured by $3$-dimensional invariant manifolds with stable and unstable
manifolds. In the unperturbed case, these stable and unstable manifolds
coincide, leading to the existence of two $4$-dimensional heteroclinic
manifolds connecting the two invariant manifolds. These heteroclinic manifolds are
foliated by heteroclinic connections between $C^0$ tori located at the same levels of
energy in both invariant manifolds.\\
By means of the {\em impact map} we have proved the persistence of these
objects under perturbation. In addition, we have provided
sufficient conditions for the existence of transversal heteroclinic intersections
through the existence of simple zeros of Melnikov-like functions, thereby extending 
some of the results given in~\cite{DelLlaSea06}.\\
These heteroclinic manifolds allow us to define the {\em scattering map},
which links asymptotic dynamics in the invariant manifolds through heteroclinic
connections. First order properties of this map provide sufficient conditions for the 
asymptotic dynamics to be located in different energy levels in
the perturbed invariant manifolds. Hence this is an essential tool for the 
construction of a heteroclinic skeleton which, when followed, can lead to the
existence of Arnol'd diffusion: trajectories that, on large time scales,
destabilize the system by further accumulating energy.\\
Finally we have validated all the theoretical results in this paper with
detailed numerical computations of a mechanical system with impacts, formed by
the linkage of two rocking blocks with a spring.  Future work should include the
study of the concatenation of the scattering map in order to construct diffusion
trajectories.


\bibliographystyle{model-num-names}
\def\zh{Zh}\def\yu{Yu}\def\ya{Ya}

\end{document}